\documentclass[12pt, oneside]{amsart}
\usepackage[utf8]{inputenc}
\usepackage[english]{babel}
\usepackage{amssymb,amsmath,amsthm}
\usepackage{graphicx}
\usepackage{xspace}
\usepackage{enumerate}
\usepackage{verbatim}
\usepackage[all, cmtip,arrow]{xy}
\usepackage{geometry} 
\geometry{left=3cm}
\geometry{right=3cm}
\geometry{top=2.5cm}
\geometry{bottom=2.5cm}

\pretolerance=10000

\newcommand{\F}[1]{\ensuremath{\mathbb{F}_{#1}}}

\newcommand{\QQ}{\ensuremath{\mathbb{Q}}\xspace}

\newcommand{\ZZ}{\ensuremath{\mathbb{Z}}\xspace}
\newcommand{\Zp}{\ensuremath{\mathbb{Z}_{(p)}}\xspace}
\newcommand{\Z}[1]{\ensuremath{\mathbb{Z}_{(#1)}}\xspace}
\newcommand{\LL}{\ensuremath{\mathbb{L}}\xspace}

\newcommand{\OO}{\ensuremath{\mathcal{O}}\xspace}
\newcommand{\rarr}{\rightarrow}

\newcommand{\xrarr}[1]{\xrightarrow{#1}}

\newcommand{\ot}{\otimes}

\newcommand{\Tor}{\ensuremath{\mathrm{Tor}}}

\newcommand{\gr}{\mathrm{gr}}

\numberwithin{equation}{section}

\newtheorem{thm}[equation]{Theorem}
\newtheorem{prop}[equation]{Proposition}
\newtheorem{lem}[equation]{Lemma}
\newtheorem{cor}[equation]{Corollary}
\newtheorem*{conj}{Guiding principle}

\theoremstyle{definition}
\newtheorem{rem}[equation]{Remark}

\newtheorem{dfn}[equation]{Definition}
\newtheorem{ntt}[equation]{}

\newcommand{\llarrow}{\mathrel{\vcenter{\vbox{\offinterlineskip
\hbox{$\leftarrow$}\hbox{$\leftarrow$}}}}}
\newcommand{\lllarrow}{\mathrel{\vcenter{\vbox{\offinterlineskip
\hbox{$\leftarrow$}\hbox{$\leftarrow$}\hbox{$\leftarrow$}}}}}

\newcommand{\zz}{\mathbb{Z}}
\newcommand{\aff}{\mathrm{aff}}
\newcommand{\qq}{\mathbb{Q}} 
\newcommand{\laz}{\mathbb{L}}
\newcommand{\ff}{\mathbb{F}}

\newcommand{\ta}{\mathbb{T}}
\newcommand{\res}{\mathrm{res}}
\DeclareMathOperator{\codim}{\mathrm{codim}}

\DeclareMathOperator{\End}{\mathrm{End}}

\newcommand{\A}{\mathrm{A}}
\newcommand{\B}{\mathrm{B}}
\newcommand{\C}{\mathrm{C}}
\newcommand{\D}{\mathrm{D}}
\newcommand{\E}{\mathrm{E}}
\newcommand{\FF}{\mathrm{F}}
\newcommand{\G}{\mathrm{G}}

\DeclareMathOperator{\Aut}{\mathrm{Aut}}

\DeclareMathOperator{\SL}{\mathrm{SL}}
\DeclareMathOperator{\Sp}{\mathrm{Sp}}
\DeclareMathOperator{\Spec}{\mathrm{Spec}}
\DeclareMathOperator{\CH}{\mathrm{CH}}
\DeclareMathOperator{\Ch}{\mathrm{Ch}}
\DeclareMathOperator{\Pic}{\mathrm{Pic}} 
\DeclareMathOperator{\ind}{\mathrm{ind}} 

\newcommand{\MS}{\mathrm{MS}}
\newcommand{\Nrd}{\mathrm{Nrd}}
\newcommand{\SB}{\mathrm{SB}}
\newcommand{\Br}{\mathop{\mathrm{Br}}}
\newcommand{\Spin}{\operatorname{\mathrm{Spin}}}
\newcommand{\GL}{\operatorname{\mathrm{GL}}}

\newcommand{\Fsep}{F_{{\mathrm{sep}}}}
\newcommand{\sep}{\mathrm{sep}}

\newcommand{\Ker}{\operatorname{Ker}}
\newcommand{\et}{{\text{et}}}

\title[Applications of the Morava $K$-theory to algebraic groups]{Applications of the Morava $K$-theory to algebraic groups}
\author{Pavel Sechin and Nikita Semenov}
\thanks{The authors gratefully acknowledge the support of SPP 1786 ``Homotopy theory and algebraic geometry'' (DFG).
The first author was partially supported by the Russian Academic Excellence Project '5-100',
by the Moebius Contest Foundation for Young Scientists, by Simons Foundation and by DFG-FOR 1920.
The second author gratefully acknowledges Universit\'e Paris~13.}

\keywords{Linear algebraic groups, torsors, cohomological invariants, oriented cohomology theories,
Morava $K$-theory, motives.}
\subjclass[2010]{20G15, 11E72, 19E15}

\date{}

\begin{document}

\maketitle

\begin{abstract}
In the article we discuss an approach to cohomological invariants of algebraic groups
based on the Morava $K$-theories.

We show that the second Morava $K$-theory detects the triviality of the Rost invariant and, more generally, relate the triviality of cohomological invariants and the splitting of Morava motives.

We compute the Morava $K$-theory of generalized Rost motives and of
some affine varieties and characterize the powers of the fundamental ideal of the Witt ring with the help of the
Morava $K$-theory. Besides, we obtain new estimates on torsion in Chow groups of quadrics and investigate torsion in Chow groups of $K(n)$-split varieties. An important role in the proofs is played by the gamma filtration on Morava $K$-theories, which gives a conceptual explanation of the nature of the torsion.

Furthermore, we show that under some conditions if the $K(n)$-motive of a smooth projective variety splits, then its $K(m)$-motive splits for all $m\le n$.
\end{abstract}

\tableofcontents

\newpage

\section{Introduction}
The present article is devoted to applications of the Morava $K$-theory to cohomological invariants of algebraic groups
 and to computations of the Chow groups of quadrics.

\begin{ntt}[Cohomological invariants]

In his celebrated article on irreducible representations \cite{Ti71} 
Jacques Tits introduced the notion of a Tits algebra, which is
an example of cohomological invariants of algebraic groups of degree $2$. 
This invariant of a linear algebraic
group $G$ plays a crucial role in the computation of the $K$-theory of twisted 
flag varieties by Panin \cite{Pa94} and in the index reduction formulas by Merkurjev,
Panin and Wadsworth \cite{MPW96}. It has important applications to the classification of linear algebraic groups
and to the study of associated homogeneous varieties.

The idea to use cohomological invariants in the classification of algebraic groups goes back to Jean-Pierre Serre.
In particular, Serre conjectured the existence of an invariant of degree $3$ 
for groups of type $\FF_4$ and $\E_8$.
This invariant was later constructed by Markus Rost for all $G$-torsors, where
$G$ is a simple simply-connected algebraic group, and is now called the Rost invariant (see \cite{GMS03}).

Moreover, the Serre--Rost conjecture for groups of type $\FF_4$ says that the map
$$H_{\et}^1(F,\FF_4)\hookrightarrow H_{\et}^3(F,\zz/2)\oplus H_{\et}^3(F,\zz/3)\oplus H_{\et}^5(F,\zz/2)$$
induced by the invariants $f_3$, $g_3$ and $f_5$ described in \cite[\S40]{Inv} ($f_3$ and $g_3$ are the modulo $2$ and modulo $3$
components of the Rost invariant), is injective. 
The validity of the Serre--Rost conjecture would imply that
one can exchange the study of the {\sl set} $H_{\et}^1(F,\FF_4)$ of isomorphism classes of groups of type $\FF_4$ over $F$ 
(equivalently of isomorphism classes of $\FF_4$-torsors or of isomorphism classes of Albert algebras)
by the study of the {\sl abelian group} $H_{\et}^3(F,\zz/2)\oplus H_{\et}^3(F,\zz/3)\oplus H_{\et}^5(F,\zz/2)$.

In the same spirit one can formulate the Serre conjecture II, 
saying in particular that $H_{\et}^1(F,\E_8)=1$ if
the field $F$ has cohomological dimension $2$. Namely, for such fields $H_{\et}^n(F,M)=0$ for all $n\ge 3$ and all torsion modules
$M$. In particular, for groups over $F$ there are no invariants of degree $\ge 3$, and the Serre conjecture II predicts
that the groups of type $\E_8$ over $F$ themselves are split.

Furthermore, the Milnor conjecture on quadratic forms (proven by Orlov, Vishik and Voevodsky) together with the Milnor conjecture on the \'etale cohomology (proven by Voevodsky)
provides a classification of quadratic forms over fields
in terms of the Galois cohomology, i.e., in terms of cohomological invariants. 

In the present article we will relate the Morava $K$-theory with some cohomological invariants of algebraic groups.
\end{ntt}

\begin{ntt}[Morava $K$-theory and Morava motives]
Let $n$ be a positive integer and let $p$ be a prime.
The Morava $K$-theory $K(n)^*$ is a free oriented cohomology theory
in the sense of Levine--Morel \cite{LM} whose coefficient ring is $\zz_{(p)}$,
whose formal group law
has height $n$ modulo $p$, and
the logarithm is of the type
$$\log_{K(n)}(x)=x+\frac{a_1}{p}x^{p^n}+\frac{a_2}{p^2}x^{p^{2n}}+\ldots$$ with $a_i\in\zz_{(p)}^\times$. 
If $n=1$ and all $a_i$ are equal to $1$, then 
the theory $K(1)^*$ is isomorphic to Grothendieck's $K^0\otimes\zz_{(p)}$ as a presheaf of rings. 
Moreover, there is some kind of analogy between
Morava $K$-theory in general and $K^0$.

More conceptually, algebraic cobordism of Levine--Morel can be considered as a functor to
 the category of comodules over the Hopf algebroid $(\LL,\LL B)$, 
where $\LL$ is the Lazard ring and $\LL B=\LL[b_1,b_2,\ldots]$.
This Hopf algebroid parametrizes the groupoid of formal group laws
with strict isomorphisms between them, 
and the category of comodules over it
can be identified with the category of quasi-coherent sheaves 
over the stack of formal groups $\mathcal{M}_{fg}$. This stack modulo $p$
has a descending filtration by closed substacks $\mathcal{M}_{fg}^{\ge n}$ which classify the formal group laws of height $\ge n$.
Moreover, $\mathcal{M}_{fg}^{\ge n}\setminus\mathcal{M}_{fg}^{\ge n+1}$ has an essentially unique geometric point which corresponds to
the Morava $K$-theory $K(n)^*\otimes\overline{\mathbb{F}}_{p}$. This chromatic picture puts $K(n)^*$ into an intermediate position between $K^0$ and $\CH^*$.

We remark also that Levine and Tripathi construct in \cite{LT15} a higher Morava $K$-theory in algebraic geometry.
\end{ntt}
                       
\begin{ntt}[Morava $K$-theories, split motives and vanishing of cohomological invariants]

There are three different types of results in this article
which fit 
into the following guiding principle.
The leading idea of this principle has been probably well understood already by Voevodsky, since he considered the Morava $K$-theory in his program on the proof of the Bloch--Kato conjecture in \cite{Vo95}.

\begin{conj}
Let $X$ be a projective homogeneous variety, let $p$ be a prime number
and let $K(n)^*$ denote the corresponding Morava $K$-theory.

Then vanishing of cohomological invariants of $X$ with $p$-torsion coefficients
in degrees no greater than $n+1$ {\sl should} correspond to 
 the splitting of the $K(n)^*$-motive of $X$.
\end{conj}

First of all, due to 
the Milnor conjecture 
the associated graded ring of the Witt ring $W(F)$ of a field $F$ of characteristic not $2$
is canonically isomorphic to the \'etale cohomology of the base field 
with $\ZZ/2$-coefficients: $H^n_{\et}(F,\ZZ/2)\simeq I^n/I^{n+1}$, where
$I$ denotes the fundamental ideal of $W(F)$.
Therefore, the projective quadric which corresponds to a quadratic form $q\in I^n$
has a canonical cohomological invariant of degree $n$. 
The guiding principle suggests that the $K(n)^*$-motive of an even-dimensional projective quadric is split
if and only if 
the class of the corresponding quadratic form in the Witt ring lies in the ideal $I^{n+2}$.
Indeed, we prove this statement in Proposition~\ref{moravaproj}.

Secondly, we relate cohomological invariants of simple algebraic groups to Morava $K$-theories.
We show in Section~\ref{sec:morava_coh_inv} that for a simple simply-connected group $G$ with trivial Tits algebras the Morava $K$-theory $K(2)^*$ 
detects the triviality of the Rost invariant of $G$. 
Note that in a similar spirit Panin showed in \cite{Pa94} 
that the Grothendieck's $K^0$ detects the triviality of Tits algebras.
Moreover, for a group $G$ of type $\E_8$ the Morava $K$-theory $K(4)^*$ for $p=2$ 
detects the splitting of the variety of Borel subgroups of $G$ over
a field extension of odd degree (Theorem~\ref{moravarost}).
All these results agree with the guiding principle.

Thirdly, we relate the property of being split with respect to Morava $K$-theories $K(n)^*$ 
for different $n$.
Namely, we prove in Proposition~\ref{prop:height_tate_motive} 
that if a smooth projective geometrically cellular variety $X$ over a field $F$
of characteristic $0$ satisfies the Rost nilpotence principle for Morava $K$-theories 
and has a split $K(n)^*$-motive,
then it has a split $K(m)^*$-motive for all $m\le n$.
In particular,
Morava motives provide a linearly ordered
series of obstructions for a projective homogeneous variety
to be isotropic over a base field extension of a prime-to-$p$ degree.
\end{ntt}                       
                                                                                          
\begin{ntt}[Operations in Morava $K$-theories and applications to Chow groups]

The study of cohomological invariants of algebraic groups is partially motivated
by the interest in Chow groups of torsors. Whenever some cohomological invariants vanish,
one may ask whether this yields any restrictions on the structure of Chow groups,
e.g. existence, order or cardinality of torsion in certain codimensions.
We approach this question by studying projective 
homogeneous  (or, more generally, geometrically cellular) varieties $X$ for which the $K(n)^*$-motive
is split.  In order to obtain information about Chow groups
 from Morava $K$-theories we use operations.
 
The first author constructed in \cite{Sech} and \cite{Sech2} generators of all (not necessarily additive) operations 
from the Morava $K$-theory to $\CH^*\otimes \zz_{(p)}$ 
and from the Morava $K$-theory to itself.
The latter allows one to define the gamma 
filtration on the Morava $K$-theory, and it turns out that its $i$-th graded factor maps surjectively onto 
$\CH^i\ot\Zp$ for all $i\le p^n$.
These operations and various their properties are constructed using the classification of operations
given in a series of articles by
Vishik (see \cite{Vish1}, \cite{Vish2}).

Let $X$ be a smooth geometrically cellular variety
such that the pullback map from $K(n)^*(X)$ to $K(n)^*(X_E)$ is an isomorphism,
where $X_E=X\times_F E$ is the base change to a field $E$ for which $X_E$ becomes cellular.
The operations above as well as symmetric operations of Vishik 
allow us to show that there is no $p$-torsion in Chow groups
of $X$ in codimensions up to $\frac{p^n-1}{p-1}$ (Theorem~\ref{th:no_torsion}).
Moreover, we prove that $p$-torsion is finitely generated in Chow groups of codimension up to
$p^n$, and we provide a combinatorial method to estimate this torsion (Theorem~\ref{th:general_gamma_app}).

For quadratic forms from the ideal $I^{m+2}$ of the Witt ring of a field $F$ of characteristic zero
the $K(m)^*$-motive of the corresponding quadric is split as mentioned above. 
Thus, we obtain that there is no torsion in Chow groups of codimensions less than $2^m$
and we also calculate uniform finite upper bounds on the torsion in $\CH^{2^m}$ which do not depend on the quadric (see Theorem~\ref{th:mult_torsion_quadric}).
In this way Morava $K$-theory provides a conceptual explanation of the nature of this torsion.

These results fit well in the quite established history of estimates on torsion of quadrics
obtained among others by Karpenko, Merkurjev and Vishik.
In particular, Karpenko conjectured in \cite[Conjecture~0.1]{Kar91}
that for every integer~$l$ the Chow group $\CH^l$ of an $n$-dimensional quadric over $F$ is torsion-free
whenever $n$ is bigger than some constant which depends only on $l$.
This was confirmed only for $l\le 4$.
 Recall that by the Arason--Pfister Hauptsatz every anisotropic non-zero quadratic
form from $I^m$ has dimension at least $2^m$ and
 therefore, the absence of torsion in Chow groups of small codimensions of corresponding quadrics
can be considered as an instance of the Karpenko conjecture.
Note also that there are examples of quadrics from $I^{m+2}$ having non-trivial torsion in $\CH^{2^m}$.

Finally, we discuss an approach to cohomological invariants which uses an exact sequence~\eqref{voev}
of Voevodsky (see below). This exact sequence involves motivic cohomology of some simplicial varieties.
For example, this sequence was used in \cite{Sem13} to construct an invariant
of degree $5$ modulo $2$ for groups of type $\E_8$ with trivial Rost invariant and to solve a problem posed by Serre.
\end{ntt}

\medskip

{\bf Acknowledgements.} 
We are sincerely grateful to Alexander Vishik for encouragement and interest in the development
of results related to quadrics and for useful comments on the relation between Morava $K$-theories and cohomological invariants.

We would like to thank sincerely Alexey Ananyevskiy, Stefan Gille, Olivier Haution, Nikita Karpenko, Fabien Morel, Alexander Neshitov,
and Maksim Zhykhovich for discussions and e-mail conversations on the subject of the article.
The second author started to work on this subject during his visit to University Paris 13 in 2014. He would like to express
his sincere gratitude to Anne Qu\'eguiner-Mathieu for her hospitality and numerous useful discussions.

\section{Definitions and notation}\label{sec:notation}

In the present article we assume that $F$ is a field of characteristic $0$.
By $\Fsep$ we denote a separable closure of $F$.

Let $G$ be a semisimple linear algebraic group over a field $F$ (see \cite{Springer}, \cite{Inv}).
A $G$-torsor over $F$ is an algebraic variety $P$ equipped with an action of $G$ such that
$P(\Fsep)\ne\emptyset$ and the action of $G(\Fsep)$ on $P(\Fsep)$ is simply transitive.

The set of isomorphism classes of $G$-torsors over $F$ is a pointed set (with the base point given by
the trivial $G$-torsor $G$) which is in natural one-to-one correspondence with the (non-abelian) Galois cohomology set $H_{\et}^1(F,G)$.

Let $A$ be some algebraic structure over $F$ (e.g. an algebra or quadratic space) such that $\Aut(A)$ is an
algebraic group over $F$. Then an algebraic structure $B$ is called a {\it twisted form} of $A$, if over a separable
closure of $F$ the structures $A$ and $B$ are isomorphic. There is a natural bijection between $H_{\et}^1(F,\Aut(A))$ and the set
of isomorphism classes of the twisted forms of $A$.

For example, if $A$ is an octonion algebra over $F$, then
$\Aut(A)$ is a group of type $\G_2$ and $H_{\et}^1(F,\Aut(A))$ is in $1$-to-$1$ correspondence with the twisted forms
of $A$, i.e., with the octonion algebras over $F$ (since any two octonion algebras over $F$ are isomorphic over a
separable closure of $F$ and since any algebra, which is isomorphic to an octonion algebra over a separable closure
of $F$, is an octonion algebra).
                                                                             
By $\qq/\zz(n)$ we denote the Galois-module $\mathrm{colim}\,\mu_l^{\otimes n}$ taken over all $l$ (see \cite[p.~431]{Inv}).

In the article we use notions from the theory of quadratic forms over fields (e.g. Pfister-forms, Witt-ring). We follow \cite{Inv}, \cite{Lam},
and \cite{EKM}. Further, we use the notion of motives; see \cite{Ma68}, \cite{EKM}.

\section{Geometric constructions of cohomological invariants}\label{sec:geom_contsr_coh_inv}

First, we describe several geometric constructions of cohomological invariants of torsors of degree $2$ and $3$.

Let $G$ be a semisimple algebraic group over a field $F$.
In general, a cohomological invariant of $G$-torsors of degree $n$ with values
in a Galois-module $M$ is a transformation of functors $H_{\et}^1(-,G)\to H_{\et}^n(-,M)$ from the category of field extensions of
$F$ to the category of pointed sets (see \cite[31.B]{Inv}).

\begin{ntt}[Tits algebras and the Picard group]\label{sec31}
In his celebrated article \cite{Ti71} Jacques Tits introduced invariants of degree $2$, called nowadays the {\it Tits algebras}.

There exists a construction of Tits algebras based on the Hochschild--Serre spectral sequence.
For a smooth variety $X$ over $F$ one has $$H^p(\Gamma,H_{\et}^q(X_{\sep},\mathcal{G}))\Rightarrow H_{\et}^{p+q}(X,\mathcal{G})$$
where $\Gamma$ is the absolute Galois group, $X_{\sep}=X\times\Fsep$ and $\mathcal{G}$ is an \'etale sheaf. The induced $5$-term exact sequence is
$$0\to H^1(\Gamma,H_{\et}^0(X_{\sep},\mathcal{G}))\to H_{\et}^1(X,\mathcal{G})\to H^0(\Gamma,H_{\et}^1(X_{\sep},\mathcal{G}))\to H^2(\Gamma,H_{\et}^0(X_{\sep},\mathcal{G}))$$

Let $\mathcal{G}=\mathbb{G}_m$ and let $X$ be a smooth projective geometrically irreducible variety. Then
$$H^1(\Gamma,H_{\et}^0(X_{\sep},\mathbb{G}_m))=H^1(\Gamma,\Fsep^\times)=0$$ 
by Hilbert's Theorem 90,
$H_{\et}^1(X,\mathbb{G}_m)=\Pic(X)$, $H^0(\Gamma,H_{\et}^1(X_{\sep},\mathbb{G}_m))=
(\Pic X_{\sep})^{\Gamma}$,
and $H^2(\Gamma,H_{\et}^0(X_{\sep},\mathbb{G}_m))=H^2(\Gamma,\Fsep^\times)=\Br(F)$. Thus, we obtain an exact sequence
\begin{equation}\label{hochsch}
0\to\Pic X\to(\Pic X_{\sep})^\Gamma\xrightarrow{f}\Br(F)
\end{equation}

The map $\Pic X\to (\Pic X_{\sep})^\Gamma$ is the restriction map and the homomorphism $$(\Pic X_{\sep})^\Gamma\xrightarrow{f}\Br(F)$$
was described by Merkurjev and Tignol in \cite[Section~2]{MT95}. If $X$ is the variety of Borel subgroups of a semisimple
algebraic group $G$, then the Picard group of $X_{\sep}$ can be identified with the free abelian group with basis
$\omega_1,\ldots,\omega_n$ consisting of the fundamental weights, i.e. $\Pic X_{\sep}=\Lambda$, where $\Lambda$ denotes the weight lattice. If $\omega_i$ is $\Gamma$-invariant (e.g. if $G$
is of inner type), then
$f(\omega_i)=[A_i]$ is the Brauer class of the Tits algebra of $G$ corresponding to the (fundamental) representation with the highest weight $\omega_i$
(see \cite{MT95} for a general description of the homomorphism $f$).

Moreover, one can continue the exact sequence~\eqref{hochsch}, namely, the sequence
\begin{equation}\label{hochsch2}
0\to\Pic X\to(\Pic X_{\sep})^\Gamma\xrightarrow{f}\Br(F)\to\Br(F(X))
\end{equation}
is exact, where the last map is the restriction homomorphism (see \cite{MT95}).
\end{ntt}

\begin{ntt}[Tits algebras and $K^0$]\label{titsk0}
There is another interpretation of the Tits algebras related to Grothendieck's $K^0$ functor.
Let $G$ be a semisimple algebraic group over $F$ of inner type and let $X$ be the variety of Borel subgroups of $G$.
By Panin \cite{Pa94} the $K^0$-motive of $X$ is isomorphic to a direct sum of $|W|$ motives, where $W$ denotes the Weyl
group of $G$. Denote these motives by $L_w$, $w\in W$.

For $w\in W$ consider $$\rho_w=\sum_{\{\alpha_k\in\Pi\mid w^{-1}(\alpha_k)\in\Phi^-\}}w^{-1}(\omega_k)\in\Lambda,$$
where $\Pi$ is the set of simple roots, $\Phi^-$ is the set of negative roots, and $\Lambda$ is the weight lattice.

Let $\Lambda_r$ be the root lattice and $$\beta\colon\Lambda/\Lambda_r\to\Br(F)$$
be the Tits homomorphism, which sends a fundamental weight $\omega_i$ to $[A_i]$ (see \cite{Ti71}). In particular, the homomorphism $\beta$ is essentially the homomorphism $f$ from Section~\ref{sec31}.
Then over a splitting field $K$ of $G$, the motive $(L_w)_K$ is isomorphic to a Tate motive
 and the restriction
homomorphism $$K^0(L_w)\to K^0((L_w)_K)=\zz$$ is an injection $\zz\to\zz$ given
 by the multiplication by $\ind A_w$,
where $[A_w]=\beta(\rho_w)$. 
In particular, different motives $L_w$ can be parametrized by the Tits algebras.

Moreover, if all Tits algebras
of $G$ are split, then the $K^0$-motive of $X$ is a direct sum of Tate motives over $F$.
\end{ntt}

\begin{ntt}[Tits algebras and simplicial varieties]
Let $Y$ be a smooth irreducible variety over $F$. Consider the \v{C}ech simplicial scheme $\mathcal{X}_Y$ associated with $Y$,
i.e. the simplicial scheme 
$$Y\llarrow Y\times Y\lllarrow Y\times Y\times Y\cdots$$

Then for all $n\ge 2$ there is a long exact sequence of cohomology groups (see \cite[Corollary~2.2]{Ro07} and \cite[Proof of Lemma~6.5]{Vo11}):
\begin{equation}\label{voev}
0\to H_{\mathcal{M}}^{n,n-1}(\mathcal{X}_Y,\qq/\zz)\xrightarrow{g} H^n_{\et}(F,\qq/\zz(n-1))\to H_{\et}^n(F(Y),\qq/\zz(n-1)),
\end{equation}
where $H_{\mathcal{M}}^{n,n-1}$ is the motivic cohomology and the homomorphism $g$ is induced by the change of topology
from Nisnevich to \'etale (note that by \cite[Lemma~7.3]{Vo03} $\mathcal{X}_Y$ is contractible in the \'etale topology).

Let $n=2$ and let $Y$ be the variety of Borel subgroups of a semisimple algebraic group $G$ of inner type. Then $H_{\et}^2(F,\qq/\zz(1))=\Br(F)$
and we have a long exact sequence
$$0\to H_{\mathcal{M}}^{2,1}(\mathcal{X}_Y)\xrightarrow{g}\Br(F)\to\Br(F(Y))$$
Thus, combining this exact sequence with exact sequence~\eqref{hochsch2} and using explicit description of the homomorphism $f$ from Section~\ref{sec31}, we obtain that
$H_{\mathcal{M}}^{2,1}(\mathcal{X}_Y)=\Lambda/\Lambda'$, where $\Lambda'$ denotes the kernel of $f$. Note also that $\Lambda_r\subset\Lambda'$.
Thus, the Tits homomorphism $\beta$ factors through $H_{\mathcal{M}}^{2,1}(\mathcal{X}_Y)$ by means of the homomorphism $g$.
This gives one more interpretation of the Tits algebras via a change of topology.
\end{ntt}

\begin{ntt}[Rost invariant]
If $G$ is a simple simply connected
algebraic group, then there exists an invariant $$H_{\et}^1(-,G)\to H_{\et}^3(-,\qq/\zz(2))$$
of degree $3$ of $G$-torsors which is called the {\it Rost invariant}
(see \cite{GMS03}). In a particular case when $G$ is the spinor group, 
this invariant is called the {\it Arason invariant}.

If $G$ is of inner type, the Rost invariant can be constructed as follows.
Let $Y$ be a $G$-torsor. Then there is a long exact sequence (see \cite[Section~9]{GMS03})
\begin{multline}\label{rostseq}
0\to A^1(Y,K_2)\to A^1(Y_{\sep},K_2)^\Gamma\\
\xrightarrow{h}\Ker\big(H^3_{\et}(F,\qq/\zz(2))\to H^3_{\et}(F(Y),\qq/\zz(2))\big)\to\CH^2(Y)
\end{multline}
where $A^1(-,K_2)$ is the $K$-cohomology group (see \cite{Ro96}, \cite[Section~4]{GMS03}), $\Gamma$ is the absolute Galois group,
and $Y_{\sep}=Y\times_F F_{\sep}$.
Moreover, $A^1(Y_{\sep},K_2)^\Gamma=\zz$ and $\CH^2(Y)=0$. The Rost invariant of $Y$ is the image of $1\in A^1(Y_{\sep},K_2)^\Gamma$
under the homomorphism $h$. We remark that sequence~\eqref{rostseq} for the Rost invariant is analogous to the
sequence~\eqref{hochsch2} for the Tits algebras arising from the Hochschild--Serre spectral sequence.

We remark also that if $G$ is a group of inner type with trivial Tits algebras (simply-connected or not), then there is a well-defined Rost invariant
of $G$ itself (not of $G$-torsors); see \cite[Section~2]{GP07}.
\end{ntt}

\section{Oriented cohomology theories and the Morava $K$-theory}\label{morava}

In this section we will introduce a cohomology theory --- the Morava $K$-theory. We will prove later that it detects the triviality of some
cohomological invariants (in particular, of the Rost invariant) of algebraic groups.

\begin{ntt}[Characteristic numbers]
Let $X$ be a smooth projective irreducible variety over a field $F$.
Given a partition
$J=(l_1,\ldots,l_r)$ of 
 length $r\ge 0$ with $l_1\ge l_2\ge\ldots\ge l_r>0$
one can associate with it a {\it characteristic class}
$$c_J(X)\in\CH^{|J|}(X)\qquad (|J|=\sum_{i\ge 1} l_i)$$ of $X$ as follows.
Let $P_J(x_1,\ldots,x_r)$ be the smallest symmetric polynomial (i.e., with a minimal number
of non-zero coefficients) containing
the monomial $x_1^{l_1}\ldots x_r^{l_r}$.
We can express $P_J$ as a polynomial on the standard symmetric functions
$\sigma_1,\ldots,\sigma_r$ as
$$P_J(x_1,\ldots,x_r)=Q_J(\sigma_1,\ldots,\sigma_r)$$ for some polynomial
$Q_J$. Let $c_i=c_i(-T_X)$ denote the $i$-th Chern class 
of the virtual normal bundle of $X$. Then
$$c_J(X)=Q_J(c_1,\ldots,c_r).$$
For $|J|=\dim(X)$, the degrees of the characteristic classes are called the {\it characteristic numbers}.

If $J=(1,\ldots,1)$ ($i$ times), then $c_J(X)=c_i(-T_X)$
is the usual Chern class.
If ${\dim X=p^n-1}$ and $J=(p^n-1)$ for some prime number $p$, we write ${c_J(X)=S_{p^n-1}(X)}$.
The degree of the class $S_{\dim X}(X)$ is always divisible by $p$ and we set 
$${s_{\dim X}(X)=\frac{\deg S_{\dim X}(X)}{p}}$$ and
call it the {\it Milnor number} of $X$ (see \cite[Section~4.4.4]{LM}, \cite[Section~2]{Sem13}).

\begin{dfn}
Let $p$ be a prime.
A smooth projective variety $X$ is called a $\nu_n$-variety if
$\dim X=p^n-1$, all characteristic numbers of $X$ are divisible by $p$
and $s_{\dim X}(X)\ne 0\text{ mod }p$.
\end{dfn}
\end{ntt}

\begin{ntt}[Oriented cohomology theories and Borel--Moore homology theories]\label{moore}

In this article we consider oriented cohomology theories $A^*$ in the sense of Levine--Morel (see \cite[Definition~1.1.2]{LM}). By a variety we always mean a quasi-projective variety.

For a smooth variety $X$ over $F$ with the irreducible components $X_1,\ldots,X_l$ we set $A_*(X):=\oplus_{i=1}^{l} A^{\dim X_i-*}(X_i)$. Then the assignment $X\mapsto A_*(X)$ defines an oriented Borel--Moore homology theory in the sense of Levine--Morel (see \cite[Definition~5.1.3]{LM}). Moreover, by \cite[Proposition~5.2.1]{LM} this gives a one-to-one correspondence between oriented cohomology theories and oriented Borel--Moore homology theories on the category of smooth varieties over $F$.

Given an oriented Borel--Moore homology theory on the category of smooth varieties over $F$ we extend it to all separated schemes of finite type over $F$ via $$A_*(Y):=\mathrm{colim}_{V\to Y}A_*(V),$$ where the colimit runs over all projective morphisms $V\to Y$, where $V$ are smooth varieties over $F$, and with push-forward maps as transition maps.

For an oriented Borel--Moore homology theory $A_*$ we say that it satisfies the {\it localization axiom}, if for every quasi-projective $F$-scheme $X$ and a closed $F$-embedding $j\colon Z\to X$ with the open complement $i\colon U\to X$ the sequence
$$A_*(Z)\xrightarrow{j_*}A_*(X)\xrightarrow{i^*} A_*(U)\to 0$$
is exact.
\end{ntt}

\begin{ntt}[Free theories]\label{freetheory}
Consider the algebraic cobordism $\Omega^*$ of Levine--Morel (see \cite{LM}).
By \cite[Theorem~1.2.6]{LM} the algebraic cobordism is a universal oriented cohomology theory, i.e. there is a
(unique) morphism of theories $\Omega^*\to A^*$ for every oriented cohomology theory $A^*$ in the sense of Levine--Morel.

Each oriented cohomology theory $A^*$ is equipped with a $1$-dimensional commutative formal group 
law $\mathcal{F}_A$. For $\Omega^*$ the respective formal group law $\mathcal{F}_{\Omega}$ is the universal one, and the canonical morphism $\mathbb{L}\to\Omega^*(\Spec F)$ from the Lazard ring is an isomorphism (see \cite[Theorem~1.2.7]{LM}).

In this article, when necessary, we consider our oriented cohomology theories as Borel--Moore homology theories and extend them to all separated schemes of finite type over $F$ as in Section~\ref{moore}. Conversely, every oriented Borel--Moore homology theory restricted to the category of smooth varieties gives an oriented cohomology theory.

\begin{dfn}[Levine--Morel,{ \cite[Remark~2.4.14(2)]{LM}}]
Let $R$ be a commutative ring, let $\mathcal{F}_R$ be a formal group law over $R$, and
let $\mathbb{L}\rightarrow R$ be the respective ring morphism.
Then $\Omega_*\otimes_\mathbb{L} R$ is an oriented Borel--Moore homology theory which is called a {\it free theory}.
Its ring of coefficients is $R$, and its associated formal group law is $\mathcal{F}_R$.
\end{dfn}

For example, the Chow theory
is a free theory with the additive formal group law and with the coefficient ring $\zz$ (see \cite[Theorem~1.2.19]{LM}).
In this article $K^0$ stands for a free theory with the multiplicative formal group law and with the coefficient ring $\zz$. If $X$ is a smooth variety over $F$, then $K^0(X)$ is Grothendieck's $K^0$-theory of locally free coherent sheaves on $X$ (see \cite[Theorem~1.2.18]{LM}).

By \cite[Corollary~4.4.3]{LM}
every free theory $A_*$ is
{\it generically constant},
i.e. for every integral scheme $X$ over $F$ the canonical map
$$ A_*(\Spec F) \rarr A_*(\Spec F(X)):= \mathrm{colim}_{U\subset X} A_{*+\dim X}(U)$$
is an isomorphism,
where the colimit is taken over all non-empty open subschemes of~$X$.

By \cite[Theorem~3.2.7]{LM} the algebraic cobordism theory satisfies the localization axiom. Hence, every free theory satisfies the localization axiom as well.

In \cite[Definition~4.1]{Vish1} Vishik defines {\it theories of rational type} in geometric terms
 and proves in
\cite[Proposition~4.7]{Vish1}
that the generically constant
theories of rational type are precisely the free theories.
Vishik's definition allows to describe efficiently the sets of operations between such theories and Riemann--Roch type results for them.
\end{ntt}

\begin{ntt}[Brown--Peterson cohomology and Morava $K$-theories]\label{sec:BP}
For a prime number $p$ and a positive integer $n$ we consider the $n$-th Morava $K$-theory $K(n)^*$ with respect to $p$.
Note that we do not include $p$ in the notation.
We define this theory as a free theory
with the coefficient ring $\zz_{(p)}[v_n,v_n^{-1}]$ where $\deg v_n=-(p^n-1)$
and with a formal group law which we will describe below.

The variable $v_n$, as it is invertible, does not play an important role in computations with Morava $K$-theories,
and sometimes we will prefer to set it to be equal to $1$. 
 It will be always clear from the context
  which $n$-th Morava $K$-theory we use,
 i.e. with $\Zp[v_n, v_n^{-1}]$- or $\Zp$-coefficients.

We follow \cite{Haz} and \cite{Rav}. 
There exists a universal $p$-typical formal group law $\mathcal{F}_{BP}$ over a ring $BP$.
The latter ring is non-canonically isomorphic to the ring $\zz_{(p)}[v_1,v_2,\ldots]$, and from now on we choose the isomorphism defined by Hazewinkel (see \cite[Appendix~2]{Rav}). 
The canonical morphism from $\mathcal{F}_{\Omega}$
over $\LL_{(p)}=\LL\otimes\zz_{(p)}$
to $\mathcal{F}_{BP}$
over the ring $\zz_{(p)}[v_1,v_2,\ldots]$
defines a multiplicative projector on $\Omega^*_{(p)}:=\Omega^*\ot\Zp$ whose image is 
the {\it Brown--Peterson cohomology} $BP^*$. 

The logarithm of the formal group law of the Brown--Peterson theory
equals
$$l(t)=\sum_{i\ge 0}m_it^{p^i},$$
where $m_0=1$ and the remaining variables $m_i$ are related to $v_j$ following Hazewinkel as follows:
$$m_j=\frac 1p\cdot\big(v_j+\sum_{i=1}^{j-1}m_iv_{j-i}^{p^i}\big),$$
see e.g. \cite[Appendix~2.2.1]{Rav}.
Let $e(t)$ be the compositional inverse of $l(t)$.
Then the Brown--Peterson formal group law is given by $e(l(x)+l(y))$. We remark that the coefficients of the logarithm $l(t)$
lie in $\qq[v_1,v_2,\ldots]$, but the coefficient ring of $BP^*$ is $BP=\zz_{(p)}[v_1,v_2,\ldots]$.
Note also that $\deg v_i=-(p^i-1)$. 

We define {an} $n$-th Morava $K$-theory
$K(n)^*$ as a free theory with a $p^n$-typical formal group law $\mathcal{F}$ over $\zz_{(p)}[v_n,v_n^{-1}]$ (or over $\Zp$)
such that the height of $\mathcal{F}$ modulo $p$ is $n$
(see \cite[Definition~3.9]{Sech2}). Thus, even for a fixed prime $p$ and a fixed height $n$ there exist non-isomorphic $n$-th Morava $K$-theories (which are though isomorphic as presheaves of abelian groups, see \cite[Theorem~5.3]{Sech2}).

As in topology we denote by $K(0)^*$ the theory $\CH^*\otimes\QQ$ (independently of a prime~$p$).

In the classical construction of the $n$-th Morava formal group law one takes the $BP$ formal group law and sends all $v_j$ with $j\ne n$
to zero. Modulo the ideal $J$ generated by $p, x^{p^n}, y^{p^n}$ the formal group law for the $n$-th Morava
$K$-theory equals then
$$\mathcal{F}_{K(n)}(x,y)=x+y-v_n\sum_{i=1}^{p-1}\frac 1p\binom pi x^{ip^{n-1}}y^{(p-i)p^{n-1}}\mod J$$
and the logarithm of the corresponding particular $n$-th Morava $K$-theory equals
$$\log_{K(n)}(t)=\sum_{i=0}^{\infty}\frac{1}{p^i}v_n^{\frac{p^{in}-1}{p^n-1}}t^{p^{in}}.$$
More generally, every $n$-th Morava $K$-theory is obtained from $BP^*$ by sending all $v_j$ with $n\!\nmid\!j$ to zero, but $v_j$ with $n|j$ are sent to some multiples of the corresponding
powers of $v_n$ (and the set of all thus obtained theories is independent of the choice of variables $v_j$).

For a variety $X$ over $F$ one has $$K(n)^*(X)=\Omega^*(X)\otimes_\laz\zz_{(p)}[v_n,v_n^{-1}],$$
and $v_n$ is a $\nu_n$-element in the Lazard ring $\mathbb{L}$.

We remark that classically in topology one considers the Morava
$K$-theory with the coefficient ring $\ff_p[v_n,v_n^{-1}]$, but in the present article it is crucial that
we consider an integral version. Note also that as was mentioned above two $n$-th Morava
$K$-theories are additively isomorphic, but are in general not multiplicatively isomorphic.

If $n=1$, for a particular choice of $K(1)^*$ there exists a functorial (with respect to pullbacks) isomorphism of algebras
$K(1)^*(X)/(v_1-1)\simeq K^0(X)\otimes\zz_{(p)}$, which can be 
obtained with the help of the Artin--Hasse exponent (for the latter see \cite[Chapter~7, Section~2]{Robert}).
\end{ntt}

\begin{ntt}[Euler characteristic]\label{eulerc}
The Euler characteristic of a smooth projective irreducible variety $X$ with respect to an oriented cohomology
theory $A^*$ ({\it $A^*$-Euler characteristic})
is defined as the push-forward
$$\pi_*^A(1_X)\in A^*(\Spec F)$$ of the structural morphism $\pi\colon X\to\Spec F$.
E.g., for $A^*=K^0\otimes \zz[v_1,v_1^{-1}]$ with push-forwards defined
as in \cite[Example 1.1.5]{LM}
the Euler characteristic of $X$ equals $$v_1^{\dim X}\cdot\sum(-1)^i\dim H^i(X,\mathcal{O}_X),$$
see \cite[Ch.~15]{Ful}. If $X$ is geometrically irreducible and geometrically cellular, then this element equals $v_1^{\dim X}$
(see \cite[Example~3.6]{Za10}).

For the Morava $K$-theory $K(n)^*$ and a smooth projective irreducible variety $X$ of dimension $d=p^n-1$
the Euler characteristic modulo $p$ equals the element $v_n\cdot u\cdot s_d$ for some $u\in\F{p}^\times$,
where $s_d$ is the Milnor number of $X$ (see \cite[Proposition~4.4.22(3)]{LM}). In particular, it is invertible, if $X$
is a $\nu_n$-variety.
If $\dim X$ is not divisible by $p^n-1$, then the Euler characteristic of $X$ equals zero,
since the target graded ring has non-trivial components only in degrees divisible by $p^n-1$.
\end{ntt}

\begin{ntt}[Motives]
For a theory $A^*$ we consider the category of $A^*$-motives over $F$,
which is defined in the same way as the category of Grothendieck's Chow motives with $\CH^*$
replaced by
$A^*$ (see \cite{Ma68}, \cite{EKM}). Namely, the morphisms between two smooth
projective irreducible varieties $X$ and $Y$ over $F$ are given by
$A^{\dim Y}(X\times Y)$.

By $\ta(l)$, $l\ge 0$, we denote the Tate motives in the category of $A^*$-motives. They are defined in the same way as the Tate motives in the category of Chow motives. Namely, the $A^*$-motive of the projective line splits as a direct sum of the $A^*$-motive of $\Spec F$, which we denote by $\ta$, and another motive, which we denote by $\ta(1)$. Then $\ta(l)$ is defined as $\ta(1)^{\otimes l}$ for $l\ge 0$.

\begin{dfn}
For an oriented cohomology theory $A^*$ and a motive $M$ in the category of $A^*$-motives over $F$ we say that
$M$ is {\it split}, if it is a finite direct sum
of Tate motives over $F$.
\end{dfn}
Note that this property depends on the theory $A^*$, i.e., there exist
smooth projective varieties whose motives are split for some oriented cohomology theories, but not for all
oriented cohomology theories. For example, it follows from Proposition~\ref{pro62} below that the $n$-th Morava $K$-theory
$K(n)^*$ for $p=2$ of an anisotropic $m$-fold Pfister quadric over $F$ is split, if $n<m-1$. On the other hand,
the Chow motive of an anisotropic Pfister quadric is never split.
\end{ntt}

\begin{ntt}[Rost nilpotence for oriented cohomology theories]\label{rostnil}
Let $A^*$ be an oriented cohomology theory and consider the category of $A^*$-motives over $F$.
Let $M$ be an $A^*$-motive over $F$. We say that the Rost nilpotence principle holds for $M$,
if the kernel of the restriction homomorphism $$\End(M)\to\End(M_E)$$ consists of nilpotent correspondences for all
field extensions $E/F$.

By \cite[Section~8]{CGM05} Rost nilpotence holds for Chow motives of all twisted flag varieties.

Rost nilpotence is a tool which allows to descend motivic decompositions over $E$ to motivic decompositions over
the base field $F$. E.g., assume that Rost nilpotence holds for $M$ and that we are given a decomposition
$M_E\simeq\oplus{M_i}$ over $E$ into a finite direct sum. The motives $M$ and $M_i$ are defined as pairs $(X,\rho)$ and $(X_E,\rho_i)$,
where $X$ is a smooth projective variety over $F$, $\rho\in A^*(X\times X)$ and $\rho_i\in A^*(X_E\times X_E)$ are some projectors. 
Assume further that all $\rho_i$ are defined over $F$, i.e. there exist $\eta_i\in A^*(X\times X)$ such that $(\eta_i)_E=\rho_i$. We would like to modify $\eta_i$ to make it a projector, while at the moment we only know that the difference $\eta_i^{\circ 2}-\eta_i$ is in the kernel of the map to $A^*(X_E\times X_E)$ and, thus, is nilpotent. In fact, considering a {\sl commutative} subring of $A^*(X \times X)$ generated by $\eta_i$ for a particular index $i$, one can show that some power of the element $\eta_i$ is a projector. It follows then that $M\simeq\oplus N_i$ for some motives $N_i$ over $F$, and the scalar extension $(N_i)_E$ is isomorphic to $M_i$
for every $i$ (for more details see \cite[Section~8]{CGM05}, \cite[Section~2]{PSZ08}).

Let $M$ be a Chow motive. By \cite[Section~2]{ViYa07} there is a unique lift of the motive $M$ to the category of $\Omega^*$-motives and,
since $\Omega^*$ is the universal oriented cohomology theory, there is a respective motive in the category of $A^*$-motives for every
oriented cohomology theory $A^*$. We denote this $A^*$-motive by $M^A$.

By \cite[Corollary~4.5]{GV18} if $M=(X,\pi)$ is a direct summand of the Chow motive of a twisted flag variety,
then Rost nilpotence holds for $M^A$
for every oriented cohomology theory obtained from $\Omega^*$ by a change of coefficients.

\end{ntt}

\begin{ntt}[Generalized Riemann--Roch theorem]\label{riero}
We follow \cite{Vish2}.
Let $A^*$ be a theory of rational type, let $B^*$ be an oriented cohomology theory
and let $\phi\colon A^*\rarr B^*$ be an operation (which does not necessarily preserve the grading
and does not have to be additive).

For a smooth variety $Z$ over a field $F$ and any $c\ge 0$
denote by $G^c_Z$ the composition
\begin{align*} 
A^*(Z) \rarr & A^*(Z)[[z_1^A,\ldots, z_c^A]]
\xrarr{\phi_{Z\times (\mathbb{P}^\infty)^{\times c}}} 
B^*(Z)[[z_1^B,\ldots, z_c^B]]\\
\alpha \mapsto  &\alpha\cdot z_1^A\cdots z_c^A
\end{align*}
where we have identified $A^*(Z)[[z_1^A,\ldots, z_c^A]]$
with $A^*(Z\times (\mathbb{P}^\infty)^{\times c})$ 
and similarly for $B^*$, 
i.e. $z_i$ is the first Chern class 
of the pullback along the projection of the canonical line bundle $\mathcal{O}(1)$
over the $i$-th product component of $(\mathbb{P}^\infty)^{\times c}$.

Note that by the so-called ``continuity of operations'' (\cite[Proposition~5.3]{Vish2})
for every $c$ and $Z$  the series $G^c_Z(1_Z)$ is divisible by $z_1^B\cdots z_c^B$.
We denote the quotient by $F^c_Z$ and set $F^c=F^c_{\mathrm{pt}}(1)\in B[[z_1^B,\ldots,z_c^B]]$ (we denote $B=B^*(\mathrm{pt})$).
We write $G^c_Z(\alpha)|_{z^B_i=y_i}$ when we plug in nilpotent elements $y_i\in B^*(Z)$
in this series (similarly, for $F^c_Z$ 
and $F^c$).

Finally, denote by $\omega^B_t\in B[[t]]dt$
the canonical invariant $1$-form of the formal group law $F_B$ such that $\omega^B(0)=dt$ (see \cite[Section~7.1]{Vish1}).

The following proposition is a particular case of a general form of Riemann--Roch type theorems {\cite[Theorem~5.19]{Vish2}}.

\begin{prop}[Vishik]\label{th:riemann_roch}
Let $X$ be a smooth variety over a field $F$. Let $i\colon Z\hookrightarrow X$ be a closed embedding of
a smooth subvariety of codimension $c$.
Let $\alpha\in A^*(Z)$ and 
denote by $\mu_1,\ldots, \mu_c$ the $B$-roots of the normal bundle $N_{Z/X}$.

Let $k\ge 0$ and let $L_i$ be line bundles over $Z$ for $1\le i\le k$.
Denote by $x_i=c_1^A(L_i)$, $y_i=c_1^B(L_i)$ their first Chern classes.

Then 
$$ \phi\left(i_* (\alpha \prod_{i=1}^k x_i)\right)
= i_* \mathrm{Res}_{t=0} \frac{G^{c+k}_Z(\alpha)|_{z_i^B=t+_B\mu_i, 1\le i\le c,\  z^B_{c+j} = y_j,
 1\le j\le k}}
{t\cdot \prod_{i=1}^c (t+_B\mu_i)} \omega^B_t. $$
\end{prop}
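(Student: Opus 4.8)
The plan is to deduce this general residue formula from the two-variable ``master'' case together with the projective bundle formula and the normalization of the formal group law. First I would treat the basic building block: the case where $Z = X$, $\alpha = 1$, and one tensors with line bundles $L_1,\dots,L_k$, i.e. compute $\phi(\prod x_i)$ in terms of the series $F^k$. This is essentially the definition of $F^k_Z$ together with the ``continuity of operations'' (\cite[Proposition~5.3]{Vish2}) cited just above: the series $G^k_Z(1_Z)$ is divisible by $z_1^B\cdots z_k^B$, and plugging $z_i^B = y_i = c_1^B(L_i)$ and using that $\phi$ commutes with pullback along $Z\times(\mathbb{P}^\infty)^{\times k}\to Z$ gives the formula when there is no closed embedding (the case $c=0$). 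Here one has to be careful that $x_i = c_1^A(L_i)$ is the $A$-first Chern class, so the identification $A^*(Z)[[z_1^A,\dots,z_k^A]]\cong A^*(Z\times(\mathbb{P}^\infty)^{\times k})$ sends $\alpha\cdot z_1^A\cdots z_k^A$ to the class that restricts correctly; this is the part of the argument that encodes ``$\phi$ is determined by its values on products of projective spaces.''

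Next I would handle the closed embedding. The geometric input is the deformation-to-the-normal-cone / excess-intersection description of $i_*$ for a regular embedding $i\colon Z\hookrightarrow X$ of codimension $c$: in algebraic cobordism (hence in any free theory, and by naturality in any theory of rational type mapping out of $A^*$) the pushforward $i_*(\alpha)$ can be expressed, after passing to the associated projective bundle $\mathbb{P}(N_{Z/X}\oplus\mathcal{O})$ or using the formal-group-law ``inverse'' of the top Chern class, via the $B$-roots $\mu_1,\dots,\mu_c$ of $N_{Z/X}$. Concretely, the residue $\mathrm{Res}_{t=0}\,\frac{(-)}{t\cdot\prod_{i=1}^c(t+_B\mu_i)}\,\omega^B_t$ is exactly the operation that inverts $i_*$ at the level of characteristic classes: pairing $t+_B\mu_i$ in the denominator with $z_i^B = t+_B\mu_i$ in the numerator reproduces, by the residue calculus for the formal group law, the standard push-pull formula $i^*i_*(\beta) = \beta\cdot\prod c_1^B(N)$. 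So the strategy is: write the left-hand side as $\phi(i_*(\alpha\prod x_i))$, factor $i_* = i_*\circ(\text{tensor by }L_i)$, reduce to understanding $\phi\circ i_*$, and then use the Riemann--Roch comparison for the single operation $\phi$ applied to a pushforward along a regular embedding — which is where the series $G^{c+k}_Z$ enters with its first $c$ variables specialized to $t +_B \mu_i$ and its last $k$ specialized to $y_j$.

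The key steps in order: (1) establish the no-embedding case $\phi(\alpha\prod_{i=1}^k x_i) = G^k_Z(\alpha)|_{z_i^B=y_i}$ directly from the definition of $G$ and continuity; (2) recall/establish the formal-group-law formula for $i_*$ along a regular embedding and the corresponding residue identity $\mathrm{Res}_{t=0}\frac{h(t)}{t\prod(t+_B\mu_i)}\omega^B_t$ computing the inverse of multiplication by the $B$-Euler class; (3) apply $\phi$ and use its compatibility with pullbacks along $Z\times(\mathbb{P}^\infty)^{\times(c+k)}\to Z$ to see that $\phi(i_*(\alpha\prod x_i))$ is obtained from $G^{c+k}_Z(\alpha)$ by the stated substitution; (4) assemble, checking degrees and convergence (everything is a well-defined power series because the $\mu_i$ and $y_j$ are nilpotent/filtered and by continuity of operations). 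The main obstacle I expect is step (2)–(3): making precise, and compatible with the operation $\phi$, the passage from the geometric pushforward $i_*$ to the algebraic residue expression — i.e. showing that the substitution $z_i^B \mapsto t +_B \mu_i$ together with $\mathrm{Res}_{t=0}(\cdots)/(t\prod(t+_B\mu_i))\,\omega_t^B$ genuinely computes $\phi$ of a pushforward and not merely of a pullback of a Chern-class expression. This is exactly the content that Vishik isolates, and the cleanest route is to reduce to the universal situation over a product of Grassmannians (or $B\mathrm{GL}_c \times (\mathbb{P}^\infty)^{\times k}$), where both sides are polynomial/formal-power-series expressions in Chern classes and can be compared after specializing to split bundles, i.e. to products of projective spaces, bringing us back to step (1).
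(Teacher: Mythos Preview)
The paper does not prove this proposition at all: it is quoted verbatim as \cite[Proposition~5.19]{Vish2} and used as a black box, with only the special case in Corollary~\ref{cr:riemann-roch} given a short derivation from it. So there is no ``paper's own proof'' to compare your proposal against.

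As for your outline on its own merits: the ingredients you list (continuity of operations to handle the $c=0$ case, deformation to the normal cone to express $i_*$ via the $B$-Euler class of $N_{Z/X}$, and reduction to the universal split situation over products of projective spaces) are indeed the ones Vishik uses in \cite{Vish2}. But what you have written is a strategy, not a proof, and you yourself flag the crux: step~(2)--(3), where one must show that the residue expression genuinely computes $\phi$ of a pushforward rather than of some Chern-class polynomial. This is the substantive content of Vishik's argument and requires his machinery classifying operations via their values on $(\mathbb{P}^\infty)^{\times n}$; it is not something that falls out of the standard excess-intersection formula alone, because $\phi$ need not be additive or multiplicative. If you want to actually prove the statement rather than sketch it, you will need to go through \cite[\S5]{Vish2} in detail.
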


We will need only the following  instance of this proposition.

\begin{cor}\label{cr:riemann-roch}
We have $\phi(i_* 1_Z) = i_*(\pi^*(F^c)|_{z_i^B=\mu_i})$, where 
$$\pi^*\colon B[[z_1^B,\ldots,z_c^B]]\rarr B^*(Z)[[z_1^B,\ldots,z_c^B]]$$
 is induced by the pullback of the structure map $\pi\colon Z\rarr\Spec F$.

In particular, the right-hand side depends only on the action of operation $\phi$
on products of projective spaces and the $B$-Chern classes
of the normal bundle of $Z$.
\end{cor}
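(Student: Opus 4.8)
The plan is to specialize Proposition~\ref{th:riemann_roch} to the case $k=0$ and $\alpha=1_Z$, and then identify the resulting residue expression with $\pi^*(F^c)|_{z_i^B=\mu_i}$. First I would set $k=0$, so that the second block of variables $z_{c+1}^B,\dots,z_{c+k}^B$ and the line bundles $L_i$ disappear entirely, and take $\alpha=1_Z\in A^*(Z)$; the left-hand side of the proposition becomes exactly $\phi(i_*1_Z)$, which is what we want to compute. The right-hand side then reads
\[
i_*\,\mathrm{Res}_{t=0}\,\frac{G^{c}_Z(1_Z)\big|_{z_i^B=t+_B\mu_i}}{t\cdot\prod_{i=1}^c(t+_B\mu_i)}\,\omega^B_t,
\]
so the task is purely to evaluate this residue.

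The key step is to use the definition of $F^c_Z$: by the ``continuity of operations'' (\cite[Proposition~5.3]{Vish2}) the series $G^c_Z(1_Z)$ is divisible by $z_1^B\cdots z_c^B$, and $F^c_Z$ is precisely the quotient, so $G^c_Z(1_Z)=F^c_Z\cdot z_1^B\cdots z_c^B$. Substituting $z_i^B=t+_B\mu_i$ gives $G^c_Z(1_Z)|_{z_i^B=t+_B\mu_i}=F^c_Z|_{z_i^B=t+_B\mu_i}\cdot\prod_{i=1}^c(t+_B\mu_i)$, and the product $\prod_{i=1}^c(t+_B\mu_i)$ cancels against the identical factor in the denominator. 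The residue then collapses to
\[
\mathrm{Res}_{t=0}\,\frac{F^c_Z\big|_{z_i^B=t+_B\mu_i}}{t}\,\omega^B_t,
\]
and since $\omega^B_t=dt+(\text{higher order in }t)$ and a residue at $t=0$ of $g(t)\,dt/t$ picks out the constant term $g(0)$, evaluating at $t=0$ (where $t+_B\mu_i$ specializes to $\mu_i$) yields $F^c_Z|_{z_i^B=\mu_i}$. Finally I would observe that $F^c_Z=F^c_Z$ is obtained from $F^c=F^c_{\mathrm{pt}}(1)$ by the pullback $\pi^*$ along $\pi\colon Z\to\Spec F$ — this is immediate from the compatibility of the construction of $G^c_{(-)}$ with base change along $\pi$ (the operation $\phi$ and the variables $z_i^B$ are all pulled back from the point) — giving $F^c_Z|_{z_i^B=\mu_i}=\pi^*(F^c)|_{z_i^B=\mu_i}$, as claimed.

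The main obstacle I anticipate is bookkeeping with the residue: one must be careful that the only contribution to $\mathrm{Res}_{t=0}$ comes from the simple pole $1/t$, i.e. that $F^c_Z|_{z_i^B=t+_B\mu_i}\,\omega^B_t/t$ has no other polar part, which is clear because $F^c_Z$ is a power series in the $z_i^B$ and $t+_B\mu_i$ is a power series in $t$ with constant term $\mu_i$, so the numerator is regular at $t=0$. A second small point is justifying that $F^c_Z=\pi^*F^c$ rather than something genuinely depending on the variety $Z$; this is exactly the functoriality of the construction $Z\mapsto G^c_Z$ in Proposition~\ref{th:riemann_roch}, which forces $F^c_Z$ to be the image of the universal series $F^c\in B[[z_1^B,\dots,z_c^B]]$ under $\pi^*$. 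The closing remark — that the right-hand side depends only on $\phi$ evaluated on products of projective spaces and on the $B$-Chern classes of $N_{Z/X}$ — is then immediate, since $F^c=F^c_{\mathrm{pt}}(1)$ is defined entirely in terms of $\phi_{(\mathbb{P}^\infty)^{\times c}}$ and the $\mu_i$ are by definition the $B$-roots of the normal bundle.
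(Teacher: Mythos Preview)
Your proposal is correct and follows essentially the same route as the paper: specialize Proposition~\ref{th:riemann_roch} to $k=0$, $\alpha=1_Z$, cancel the product $\prod_i(t+_B\mu_i)$ via $G^c_Z(1_Z)=F^c_Z\cdot z_1^B\cdots z_c^B$, evaluate the residue using $\omega^B_t(0)=dt$, and identify $F^c_Z=\pi^*F^c$. The only difference is order of presentation (the paper first verifies $F^c_Z=\pi^*F^c$ by writing out $G^c_Z(\pi^*1)=\phi(\pi^*(1)z_1^A\cdots z_c^A)=\pi^*\phi(z_1^A\cdots z_c^A)=\pi^*G^c_{\mathrm{pt}}(1)$, then does the residue), and the paper makes the commutation of $\phi$ with $\pi^*$ slightly more explicit than your appeal to functoriality.
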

\begin{proof}
Indeed, $1_Z=\pi^*(1)$ 
and $G^c_Z(\pi^* (1))=\phi(\pi^*(1)z_1^A\cdots z_c^A)=\pi^*(\phi(z_1^A\cdots z_c^A))=\pi^*G^c_{\mathrm{pt}}(1)$.
It follows that $F^c_Z(1_Z)=\pi^* F^c_{\mathrm{pt}}(1)=\pi^* F^c$.

We can rewrite the formula in Proposition~\ref{th:riemann_roch} as
$$ \phi(i_* 1_Z) = i_*\mathrm{Res}_{t=0}F^c_Z(1_Z)|_{z_i=t+_B \mu_i}\frac{\omega^B_t}{t}
\prod_i \frac{t+_B\mu_i}{t+_B\mu_i} .$$
Since $\omega^B_t(0)=dt$, we get the required formula.
\end{proof}
\end{ntt}

\begin{ntt}[Topological filtration on free theories] 

For a free theory $A^*$ and a smooth variety $X$ we define the {\it topological filtration} 
(sometimes referred to as a filtration by codimension of support) 
as the kernel of the restriction maps to open subvarieties which have a complement of codimension bounded below:
\begin{equation}\label{fo415}
\tau^i A^*(X):= \bigcup_{U\subset X:\, \codim_X (X\setminus U) \ge i} \Ker(A^*(X)\rarr A^*(U)).
\end{equation}
Since the restriction maps commute with pullbacks, it is clear that $\tau^i A^*$ is a subpresheaf of $A^*$. 

We denote by $\tilde{A}^*:=\tau^1 A^*$
the subpresheaf consisting of all elements
which vanish in the generic points of varieties.  
Note that
since $A^*$ is generically constant,
for every irreducible variety $X$ we have a canonical splitting of abelian groups:
${A^*(X)=A \oplus \tilde{A}^*(X)}$, where $A$ stands for $A^*(\Spec F)$.

One shows using the localization axiom for free theories (\cite[Theorem~3.2.7]{LM})
that this definition is 
equivalent to the one given using
images of push-forwards 
as in \cite[Section~4.5.2]{LM} (cf. \cite[Proposition~1.17(1)]{Sech2} for a relation between the topological filtration on $\Omega^*$ and on free theories).

There exists a canonical surjective map of $\LL$-modules 
$\rho_\Omega\colon \CH^i\ot \LL \rarr \tau^i \Omega^*/\tau^{i+1} \Omega^*$
(see \cite[Corollary~4.5.8]{LM}),
and we denote by $\rho_A\colon\CH^i\ot_\LL A \rarr \tau^i A^*/\tau^{i+1} A^*$
the map of $A$-modules obtained by the change of coefficients of $\rho_\Omega$
from $\LL$ to $A$.

Besides, we denote $\gr^i_\tau \tilde{A}^j:=\tau^i \tilde{A}^j/\tau^{i+1} \tilde{A}^j$, where $\tau^i\tilde{A}^j$ is defined as in formula~\eqref{fo415} with $A^*$ replaced by $\tilde{A}^j$.
\end{ntt}

\section{Gamma filtration on Morava $K$-theories}\label{sec:morava_gamma_filtration}

\begin{ntt}[Operations from Morava $K$-theories]\label{sec:chern_classes}

In article \cite{Sech2} the first author classified all operations
from the $n$-th Morava $K$-theory to the so called $p^n$-typical oriented theories whose
coefficient ring is a free $\Zp$-module.

We will exploit these operations
only when the target theory is either the $n$-th Morava $K$-theory itself
 or the Chow theory with $p$-local coefficients.
There exist certain generators of the algebra of all operations constructed in \cite{Sech2} which in these cases are denoted by $c_i^{K(n)}$ and $c_i^{\CH}$ respectively,
and we summarize their properties in this section.

In this section we consider Morava $K$-theories with $\Zp$-coefficients, i.e. we set $v_n=1$.
This agrees with \cite{Sech, Sech2}.
This reduction to $\Zp$-coefficients does not break the grading completely.
Namely, one can show the following proposition.

\begin{prop}[{\cite[Proposition~4.1.5]{Sech}} \& {\cite[Proposition~3.15]{Sech2}}]\label{prop:morava_grading}

{\ }

\begin{enumerate}
\item Morava $K$-theories $K(n)^*$ are {$\ZZ/(p^n-1)$-graded}
as presheaves of rings.

\item\label{item:pushforward} The grading is compatible with push-forwards,
i.e. for a projective morphism $f\colon X\rarr Y$ of codimension $c$
the push-forward map increases the grading in Morava $K$-theories by $c$:
 $f_*\colon K(n)^i(X)\rarr K(n)^{i+c}(Y)$.

\noindent 
In particular, the first Chern class
of any line bundle $L$ over a smooth variety $X$ lies in $K(n)^1(X)$.

\item The topological filtration on the graded component of the $n$-th Morava $K$-theory
changes only every $p^n-1$ steps, i.e.
we have
$$ \tau^{j+s(p^n-1)+1} \tilde{K}(n)^j = \tau^{j+s(p^n-1)+2} \tilde{K}(n)^j = \ldots =
\tau^{j+(s+1)(p^n-1)} \tilde{K}(n)^j,$$
where $j\in [1, p^n-1]$, $s\ge 0$.

In particular, $\gr^j_\tau \tilde{K}(n)^* = \tilde{K}(n)^j/\tau^{j+p^n-1} \tilde{K}(n)^j$
for $j\colon 1\le j\le p^n-1$.
\end{enumerate}
\end{prop}

We denote the graded components of $K(n)^*$ as $K(n)^1$, $K(n)^2$, $\ldots$, $K(n)^{p^n-1}$
and freely use the notation $K(n)^i$, $K(n)^{i \mod{p^n-1}}$, $K(n)^{i+r(p^n-1)}$
to denote the component $K(n)^j$ where $j\equiv i \mod p^n-1$, $1\le j\le p^n-1$.

\begin{thm}[{\cite[Theorem~4.2.1]{Sech}, \cite[Theorem~3.16]{Sech2}}]\label{th:chern_classes}
There exist operations $c_i^{K(n)}\colon K(n)^{i \mod p^n-1}\rarr K(n)^{i\mod p^n-1}$
and $c_i^{\CH}\colon K(n)^i\rarr \CH^i\ot\Zp$ for $i\in\ZZ^{>0}$
satisfying the following properties (we omit the index $K(n)$ resp. $\CH$ in the notation of $c_i$,
since the index is always clear from the context):
\begin{enumerate}
\item For any smooth variety $X$ and for every pair of elements $x,y\in K(n)^*(X)$ the modified Cartan's formula holds:
$$ c_{tot}(x+y)=\mathcal{F}_{K(n)}(c_{tot}(x), c_{tot}(y)),$$
where $\mathcal{F}_{K(n)}$ is the formal group law for the Morava $K$-theory, $c_{tot}=\sum_{i\ge 1} c_it^i$,
$t$ is a formal variable
and we naturally consider each operation $c_j$ 
to be defined on the whole group $K(n)^*$ via the composition with the natural projection to $K(n)^j$. 

\noindent
The equality takes place in $K(n)^*(X)\ot_{\Zp} \Zp[[t]]$ or $\CH^*(X)\ot\Zp[[t]]$.
\item Every operation from the presheaf 
$\tilde{K}(n)^*$ 
to the corresponding target theory
can be uniquely expressed as a formal power series in $c_i$'s with $\Zp$-coefficients.
\end{enumerate}
\end{thm}

\begin{ntt}[The gamma filtration]
The above operations from the $n$-th Morava $K$-theory to itself allow to define 
the gamma filtration verbatim as for the $K$-theory.
We recall first the classical picture, since
the situation with Morava $K$-theories is very similar.

Recall that for $K^0$
the Chern classes $c_i\colon K^0\rarr \CH^i$ 
can be restricted to additive maps
$$c_i\colon \mathrm{gr}^i_\tau K^0\rarr \CH^i,$$
where $\mathrm{gr}^i_\tau K^0$ stand for the graded components of the topological filtration $\tau^\bullet$ on $K^0$.

There is also a canonical map 
$(\rho_{K_0})_i\colon\CH^i\rarr \mathrm{gr}^i_\tau K^0$
which sends a cycle $Z$
to the class of the coherent sheaf $[\OO_Z]$.
The compositions $(\rho_{K_0})_i \circ c_i$, $c_i\circ (\rho_{K_0})_i$
are multiplications by $(-1)^{i-1}(i-1)!$.
In particular, $c_i$ is surjective if one inverts $(i-1)!$.

The gamma filtration $\gamma^\bullet$ for $K^0$ is an approximation of the topological filtration. One has $\gamma^i\subset\tau^i$ for all $i$,
and $\gamma^i=\tau^i$ for $i\le 2$. Moreover, the induced map $$\mathrm{gr}^i_\gamma K^0\rarr\mathrm{gr}^i_\tau K^0\xrightarrow{c_i}\CH^i$$
is surjective for $i\le 2$.

A similar picture holds for the Morava $K$-theories.
The canonical additive map $$(\rho_{K(n)})_i\colon\CH^i\otimes\Zp\rarr \tau^iK(n)^*/\tau^{i+1}K(n)^*$$
is defined using \cite[Corollary~4.5.8]{LM}. It is possible to calculate the compositions $(\rho_{K(n)})_i\circ c_i^{\CH}$,
$c_i^{\CH}\circ (\rho_{K(n)})_i$, 
and they turn out to be isomorphisms in a bigger range compared to $K^0$.

\begin{prop}[{\cite[Proposition~6.2]{Sech2}}]\label{prop:chow_top_filt_morava}
The canonical map 
$$(\rho_{K(n)})_i\colon\CH^i\otimes\Zp\rarr \tau^iK(n)^*/\tau^{i+1}K(n)^*$$ 
is an isomorphism for $0\le i\le p^n$, and the map $c_i^{\CH}$ is its inverse for $1\le i\le p^n$.
\end{prop}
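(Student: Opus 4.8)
The plan is to reduce the statement to a residue computation via the Riemann--Roch formalism of \S\ref{riero}, and then to exploit the specific shape of the logarithm of the Morava $K$-theory to control the lowest-order term. First I would recall the two compositions that must be analyzed: the composite $c_i^{\CH}\circ(\rho_{K(n)})_i\colon \CH^i\ot\Zp\to\CH^i\ot\Zp$ and $(\rho_{K(n)})_i\circ c_i^{\CH}$. Because $\tau^\bullet K(n)^*$ is generated by push-forwards of projective morphisms, and by the compatibility of the Chern-class operations with push-forwards (Theorem~\ref{th:chern_classes}(1)), it suffices to evaluate these compositions on classes of the form $i_*(1_Z)$ for $i\colon Z\hookrightarrow X$ a closed embedding of codimension $i$ --- equivalently, by Corollary~\ref{cr:riemann-roch}, to compute $c_i^{\CH}(j_*1_Z)$ in terms of the $F^c$-series of the operation $c^{\CH}_{tot}$ evaluated on products of projective spaces and the Chern roots $\mu_1,\ldots,\mu_c$ of the normal bundle. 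Thus the whole proposition becomes a statement about a single universal power series.

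Next I would identify the relevant series. The total Chern class operation $c_{tot}^{\CH}\colon K(n)^*\to \CH^*\ot\Zp[[t]]$ is the additive-to-$\FGL_{K(n)}$ operation characterized in Theorem~\ref{th:chern_classes}(2); concretely, on a line bundle $L$ with $K(n)$-first Chern class $x$ and Chow-first Chern class $y$ one has $c_{tot}^{\CH}(x)=$ (a unit)$\cdot$(the power series expressing $y$ in terms of $x$ via the exponential of $\FGL_{K(n)}$), i.e. it is governed by $e_{K(n)}(t)$, the compositional inverse of $\log_{K(n)}(t)=\sum_{i\ge0}p^{-i}t^{p^{in}}$ (setting $v_n=1$). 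Applying Corollary~\ref{cr:riemann-roch} with $\mu_k$ the Chow Chern roots, $c_i^{\CH}(j_*1_Z)$ is, up to $j_*$, the coefficient extracted from $\prod_k \bigl(\text{unit}\cdot \frac{t+_{\FGL}\mu_k}{\text{something}}\bigr)$, and one reads off that the composition $c_i^{\CH}\circ(\rho_{K(n)})_i$ equals multiplication by the coefficient of $t^0$ in an explicit rational expression built from $\log_{K(n)}$ and $e_{K(n)}$. The key arithmetic input is that the logarithm of $K(n)^*$ has the form $t + \frac1p t^{p^n} + O(t^{p^{2n}})$, so that $\log_{K(n)}(t)\equiv t \pmod{\frac1p(t^{p^n})}$ and, crucially, $e_{K(n)}(t)\equiv t\pmod{t^{p^n}}$ with all intermediate coefficients lying in $\Zp$ (not merely $\QQ$) up to degree $p^n-1$. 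Hence the universal multiplier one obtains is a $p$-adic unit as long as the codimension $i$ stays $\le p^n$; this is exactly where the bound $i\le p^n$ enters, and it is the analogue of the appearance of $(i-1)!$ in the classical $K^0$ picture, now pushed out to $p^n$ because the first nonlinear term of $\log_{K(n)}$ sits in degree $p^n$.

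The main obstacle, and the step requiring genuine care, is controlling the denominators: a priori $e_{K(n)}(t)$ and the residue expression involve $\frac1p$, so one must show that after taking the relevant coefficient the result is integral over $\Zp$ and in fact a unit for every $i\le p^n$. The cleanest way I would organize this is to note that all coefficients of $\log_{K(n)}(t)$ and $e_{K(n)}(t)$ in degrees $< p^n$ are honest integers (indeed $1$ in degree $1$ and $0$ elsewhere below $p^n$), so that modulo $t^{p^n}$ the operation $c^{\CH}_{tot}$ agrees with the tautological identification $\CH^1 = K(n)^1$ on first Chern classes; this forces $c_i^{\CH}\circ(\rho_{K(n)})_i$ and $(\rho_{K(n)})_i\circ c_i^{\CH}$ to be the identity for $i\le p^n-1$, and a separate computation of the coefficient of $t^{p^n}$ --- which is $\frac1p$ in the logarithm but $-\frac1p\cdot(\text{unit})$ after inverting, still producing a $p$-adic \emph{unit} on the relevant graded piece because the $\tfrac1p$'s cancel against the $\binom{p}{i}$-type combinatorial factors from the Cartan formula --- handles the boundary case $i=p^n$. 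Finally, since $\rho_{K(n)}$ is known to be surjective by \cite[Corollary~4.5.8]{LM}, once $c_i^{\CH}\circ(\rho_{K(n)})_i$ is an isomorphism we conclude both that $(\rho_{K(n)})_i$ is injective, hence bijective, and that $c_i^{\CH}$ restricted to $\tau^i/\tau^{i+1}$ is its two-sided inverse, completing the proof. I expect the only real work to be the explicit verification of the $i=p^n$ coefficient and the bookkeeping that no smaller prime power of $p$ intrudes for $i<p^n$, both of which reduce to known formulas for the coefficients of $\log_{K(n)}$ together with the Cartan formula of Theorem~\ref{th:chern_classes}(2).
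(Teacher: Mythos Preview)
The paper does not itself prove this proposition: it is quoted verbatim from \cite[Proposition~6.2]{Sech2}, and the surrounding text only sketches the idea (``It is possible to calculate the compositions $(\rho_{K(n)})_i\circ c_i^{\CH}$, $c_i^{\CH}\circ(\rho_{K(n)})_i$, and they turn out to be isomorphisms in a bigger range compared to $K^0$''). Your plan follows exactly this sketch, so at the level of strategy you are aligned with the intended approach.

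That said, two steps of your plan would not go through as written. First, you justify the reduction to classes $i_*(1_Z)$ for closed embeddings $Z\hookrightarrow X$ by citing ``compatibility of the Chern-class operations with push-forwards (Theorem~\ref{th:chern_classes}(1))''. That item says only that $c_i$ is supported on $K(n)^{i\bmod p^n-1}$; it says nothing about push-forwards. More seriously, elements of $\gr^i_\tau K(n)^*(X)$ are represented by push-forwards along projective morphisms from \emph{resolutions} of closed subvarieties, not along closed embeddings, so Corollary~\ref{cr:riemann-roch} does not apply directly. The clean fix is to observe that both $\rho_{K(n)}$ and $c_i^{\CH}|_{\gr^i_\tau}$ are additive natural transformations (the latter by Theorem~\ref{th:morava_gamma_properties}(ii),(iii)), so the composite $c_i^{\CH}\circ(\rho_{K(n)})_i$ is an additive natural endomorphism of $\CH^i\otimes\Zp$ and hence multiplication by a constant; you then only need to compute that constant on a single convenient example such as a linear $\mathbb{P}^{N-i}\hookrightarrow\mathbb{P}^N$, where the Riemann--Roch formula \emph{does} apply.

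Second, your treatment of the boundary case $i=p^n$ is not an argument. The phrase ``the $\tfrac1p$'s cancel against the $\binom{p}{i}$-type combinatorial factors from the Cartan formula'' does not explain what is being cancelled: the Cartan formula in Theorem~\ref{th:chern_classes}(2) governs $c_{tot}(x+y)$, not $c_{tot}$ of a product of first Chern classes, so it does not produce binomial coefficients in the way you suggest. What actually needs to be checked is that the constant of the previous paragraph, computed on $\mathbb{P}^{N-p^n}\hookrightarrow\mathbb{P}^N$, lies in $\Zp^\times$; this amounts to an explicit residue computation with the series $F^{p^n}$ of Section~\ref{riero}, and it is this computation (carried out in \cite{Sech2}) that uses the precise shape of $\log_{K(n)}$ in degree $p^n$. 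Your intuition that the bound $i\le p^n$ comes from the first nonlinear term of $\log_{K(n)}$ sitting in degree $p^n$ is correct, but the verification at $i=p^n$ is genuinely the delicate step and cannot be waved away.
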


In general, it is hard to calculate the topological filtration for $K(n)^*(X)$ even if
$X$ is a geometrically cellular variety and $K(n)^*$-motive of $X$ is split. 
The problem is that the topological filtration is not strictly respected by the base change restrictions
like $K(n)^*(X)\rarr K(n)^*(\overline{X})$.
 The gamma filtration which we will now describe is a computable approximation 
 to the topological filtration which lacks such ``handicap''.

\begin{dfn}[{\cite[Definition~6.1]{Sech2}}]
Define the gamma filtration on $K(n)^*$ of a smooth variety $X$ by the following formulas:
$$ \gamma^0 K(n)^*(X)=K(n)^*(X),$$
$$\gamma^m K(n)^*(X):= \langle c_{i_1}^{K(n)}(\alpha_1)\cdots c_{i_k}^{K(n)}(\alpha_k)\mid \sum_j i_j\ge m, i_j \ge 1, k\ge 1, \alpha_j \in K(n)^*(X)\rangle,$$
where the $\langle\,,\rangle$-brackets denote the generation as $\Zp$-modules and $m\ge 1$.
\end{dfn}

It is clear from the definition that $\gamma^m K(n)^*$ is an ideal subpresheaf of $K(n)^*$.

\begin{thm}[{\cite[Proposition~6.2]{Sech2}}]\label{th:morava_gamma_properties}
The gamma filtration and the topological filtration satisfy the following properties:

\begin{enumerate}[i)]
\item $\gamma^i\subset\tau^i$ for all $i$;
\item $c_i^{\CH}|_{\tau^{i+1}K(n)^*}=0$, $c_i^{\CH}|_{\gamma^{i+1}K(n)^*}=0$;
\item\label{item:gr_chern_isom_rat} the operation $c^{\CH}_i$ is additive when restricted to $\tau^i K(n)^*$ or $\gamma^i K(n)^*$
and the map
$$c^{\CH}_i\otimes\mathrm{id}_\qq\colon \mathrm{gr}^i_\gamma K(n)^*\otimes_{\zz_{(p)}}\qq\rarr \CH^i\ot\QQ$$ is an isomorphism;
\item\label{item:chern_isom_top} $c_i^{\CH}$ 
induces an additive isomorphism between $\gr^i_\tau K(n)^*$
and $\CH^i\ot\Zp$ for ${1\le i\le p^n}$;
\item\label{item:chern_surj} $c_i^{\CH}$ restricted to  $\gamma^iK(n)^*$ is surjective for $1\le i\le p^n$;
\item\label{item:gr_gamma_support} $\mathrm{gr}_\gamma^iK(n)^*=\mathrm{gr}_\gamma^iK(n)^{i\mod p^n-1}$.
\end{enumerate}
\end{thm}

In section~\ref{bounds} we will use the Riemann--Roch formula 
(Proposition~\ref{th:riemann_roch}, Corollary~\ref{cr:riemann-roch})
to perform computations with the gamma filtration.
Let us sketch how it applies.

We follow the notation of Section~\ref{riero}.
Let $\phi \colon A^*\to B^*$ be an operation, let $X$ be a smooth variety, and let $i\colon Z\hookrightarrow X$ be its smooth
closed subvariety of codimension $c$.

It follows from the Riemann--Roch formula that the value
$\phi(i_*1_Z)$ is equal to $b\cdot 1_Z$ modulo $(c + 1)$-st part of the topological filtration, where $b \in B$ is the
coefficient of $z_1^B\cdots z_c^B$ in the series $\phi(z_1^A\cdots z_c^A)$. The following technical statements describe this coefficient for some operations for the Morava $K$-theory.

\begin{prop}[{\cite[Proposition~6.11]{Sech2}}]\label{prop:constant_cpn}
Let $c^{K(n)}_{p^n}$ be the respective operation from $K(n)^1$  to $\gamma^{p^n}K(n)^1$.

Denote by $e_j$, $j\ge 0$, the coefficient
of the monomial $z_1\cdots z_{1+j(p^n-1)}$ in the series
$c^{K(n)}_{p^n}(z_1\cdots z_{1+j(p^n-1)}) 
\in K(n)^1((\mathbb{P}^\infty)^{\times {1+j(p^n-1)}})$.

Then for all primes $p$ and for all $j\ge 1$ we have $e_j\in \Zp^{\times}$.
\end{prop}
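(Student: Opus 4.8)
The plan is to compute the coefficients $e_j$ recursively by exploiting the modified Cartan formula from Theorem~\ref{th:chern_classes}(2) together with the structure of the Morava formal group law modulo $p$. First I would reduce to a statement modulo $p$: since $e_j\in\zz_{(p)}$, it suffices to show $e_j\not\equiv 0\bmod p$, so one may work in $K(n)^*\otimes\F{p}$, where $\FGL_{K(n)}(x,y)\equiv x+y-v_n\sum_{i=1}^{p-1}\tfrac1p\binom pi x^{ip^{n-1}}y^{(p-i)p^{n-1}}$ modulo the ideal generated by $p,x^{p^n},y^{p^n}$, and (with $v_n=1$) this is the truncated $p$-typical law. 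The relevant input is the product variety $(\mathbb{P}^\infty)^{\times(1+j(p^n-1))}$: on it $c_{tot}$ of the sum $z_1+\dots+z_{1+j(p^n-1)}$ of the first Chern classes of the $\mathcal O(1)$'s is obtained by iterating $\FGL_{K(n)}$ applied to $c_{tot}(z_i)=z_it+\dots$ Actually the cleanest route is to use the total Chern class of a sum of line bundles: by the Cartan formula $c_{tot}(\sum_i \ell_i)=\FGL_{K(n)}(c_{tot}(\ell_1),\FGL_{K(n)}(c_{tot}(\ell_2),\dots))$, and each $c_{tot}(\ell_i)$ is itself a power series in $t$ with leading term $z_it$ (since $c_1^{K(n)}$ restricted to a line bundle is essentially the identity, by property (1) and normalization).

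The key step is then a generating-function identity. I would set $k=1+j(p^n-1)$ and extract from the iterated FGL the coefficient of $z_1\cdots z_k t^k$ on the one hand (this contributes to $e_j$ via $c^{K(n)}_{p^n}$ being the component in degree $p^n$, i.e. the coefficient of $t^{p^n}$ after one rearranges the bookkeeping — here one uses that $p^n\equiv 1\bmod p^n-1$ and that $k\equiv 1\bmod p^n-1$, matching the grading constraint from property (1)). On the other hand, since in the truncated law the only ``interaction'' term beyond addition is the single monomial $-\sum_{i=1}^{p-1}\tfrac1p\binom pi x^{ip^{n-1}}y^{(p-i)p^{n-1}}$, which is homogeneous of degree $p^n$ in the line-bundle variables, a multilinear extraction (one factor of $z_i$ from each of the $k$ tensor factors) forces a very rigid combinatorial pattern: each application of the interaction term ``consumes'' exactly $p$ of the $\mathbb{P}^\infty$-factors and produces, in the $t$-grading, a shift by $p^n-1$ relative to pure addition. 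Thus computing $e_j$ amounts to counting, with the appropriate FGL coefficients, the ways to organize $k$ factors into such interaction events; the recursion $e_j$ in terms of $e_{j-1}$ will have multiplier given by a product of the coefficients $-\tfrac1p\binom pi$ summed over $i$, which is a unit in $\zz_{(p)}$ (this is the classical fact that $\sum_{i=1}^{p-1}\tfrac1p\binom pi\equiv -1 \bmod p$, equivalently the coefficient of $xy$-type terms in the height-$n$ Morava law is a $v_n$-multiple of a unit). Combined with the base case $e_0$ (the coefficient of $z_1$ in $c^{K(n)}_{p^n}(z_1)$), which one checks directly from the explicit form of $c_{p^n}^{K(n)}$ on a single $\mathbb{P}^\infty$ and the classical computation of the $p^n$-th Chern class operation on $K$-theory analogues, one concludes $e_j\in\zz_{(p)}^\times$ for all $j\ge1$.

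The main obstacle I anticipate is the precise bookkeeping in the grading and the passage from ``coefficient of $z_1\cdots z_k$'' to ``coefficient of $t^{p^n}$ in the relevant Chern-class component'': one has to be careful that $c^{K(n)}_{p^n}$ as an operation $K(n)^1\to\gamma^{p^n}K(n)^1$ lands in degree $p^n\equiv1\bmod p^n-1$, and that its value on the specific input $z_1\cdots z_{1+j(p^n-1)}\in K(n)^1((\mathbb{P}^\infty)^{\times j})$ (note the input has $j$ variables but $1+j(p^n-1)$ is the number being referenced — I would first clarify this indexing, presumably $z_1,\dots,z_{1+j(p^n-1)}$ are the hyperplane classes after embedding via Segre-type maps, or the statement is about the diagonal). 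Once the correct reading is fixed, the argument is a finite computation with the truncated formal group law; the reason it works uniformly in $p$ is precisely that the height-$n$ condition guarantees the relevant structure constant $\sum_{i=1}^{p-1}\tfrac1p\binom pi$ is a $p$-adic unit, which is the conceptual heart of the statement.
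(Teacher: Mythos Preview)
The paper does not prove this proposition; it is quoted from \cite[Proposition~6.10]{Sech2}, so there is no in-paper argument to compare against. That said, your plan contains a genuine gap.

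The central misstep is the use of the modified Cartan formula. Theorem~\ref{th:chern_classes}(2) says $c_{tot}(x+y)=\FGL_{K(n)}(c_{tot}(x),c_{tot}(y))$; it governs the behaviour of the operations on \emph{sums}. But the element fed into $c^{K(n)}_{p^n}$ here is the \emph{product} $z_1\cdots z_{1+j(p^n-1)}$ of hyperplane classes, not their sum. Iterating the formal group law on the $c_{tot}(z_i)$ computes $c_{tot}(z_1+\cdots+z_k)$, which is the wrong quantity; there is no Cartan-type identity expressing $c_{tot}(xy)$ in terms of $c_{tot}(x)$ and $c_{tot}(y)$, and the ``multilinear extraction'' and recursion you sketch rest on this conflation. (Your hesitation about the indexing is justified: the exponent in $(\mathbb{P}^\infty)^{\times j}$ should read $1+j(p^n-1)$, one factor per variable $z_i$, as in the parallel statement Proposition~\ref{prop:constant_chern_classes2}.)

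In the notation of~\ref{riero}, the number $e_j$ is the constant term of the series $F^{1+j(p^n-1)}$ attached to the operation $c^{K(n)}_{p^n}$. By Vishik's classification, the collection of these series is precisely the data that \emph{defines} an operation out of a theory of rational type; it is not recoverable from the Cartan formula alone, which merely records one compatibility among them. The argument in \cite{Sech2} proceeds by unwinding the explicit construction of $c^{K(n)}_{p^n}$ given there and reading off the $p$-adic valuations directly. To carry out a proof you need to engage with that construction rather than with the Cartan formula.
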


\begin{prop}[{\cite[Proposition~6.13]{Sech2}} for $p=2$]\label{prop:constant_chern_classes2}
Let $j\ge 0$.

There exist operations $\chi, \psi:K(n)^1\rarr \gamma^{2^{n+1}-1}K(n)^1$
which satisfy the following. 

Denote by $g_j, f_j \in\Z{2}$, the coefficients
of the monomial $z_1\cdots z_{1+j(2^n-1)}$ in the series
$\chi(z_1\cdots z_{1+j(2^n-1)})$, 
 $\psi(z_1\cdots z_{1+j(2^n-1)}) \in K(n)^1((\mathbb{P}^\infty)^{\times {1+j(2^n-1)}})$, respectively.
Then 

\begin{enumerate}[1)]
\item we have $g_j = f_j = 0$ for $j=0,1$.
\end{enumerate}

Let $j\ge 2$.
\begin{enumerate}[1)]
\setcounter{enumi}{1}
\item\label{item:coef_2pn-1}
We have $g_j\in 2^{t_j}\Z{2}^\times$ where $t_j=\nu_2(j-1)+2$ if $j$ is odd,
and $t_j=1$ if $j$ is even. Here $\nu_2$ denotes the $2$-adic valuation on integers.

\item\label{item:coef_adams} We have $f_j \in 2^{2^n}\Z{2}^{\times}$.
\end{enumerate}
\end{prop}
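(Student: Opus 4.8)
The plan is to produce the operations $\chi$ and $\psi$ from building blocks already available: the generating operations $c_i^{K(n)}$ of Theorem~\ref{th:chern_classes}, whose values on products of projective spaces are controlled, together with the compositional structure of the formal group law $\mathrm{FGL}_{K(n)}$ and the known coefficient $e_j \in \Zp^\times$ of Proposition~\ref{prop:constant_cpn}. Recall that every operation from $\tilde K(n)^*$ to $K(n)^*$ is a free polynomial in the $c_i^{K(n)}$, so an operation landing in $\gamma^{2p^n-1}K(n)^1$ can be specified by choosing an appropriate polynomial combination of Chern classes of total degree $\ge 2p^n-1$ in the gamma filtration; by Theorem~\ref{th:morava_gamma_properties}\eqref{item:gr_gamma_support} and Proposition~\ref{prop:morava_grading} the target component $K(n)^1$ is automatic once the total degree is $\equiv 1 \bmod p^n-1$, which $2p^n-1 = 2(p^n-1)+1$ indeed is. So the real content is a \emph{calculation} of the leading coefficient of a well-chosen such combination evaluated on $z_1\cdots z_{1+j(p^n-1)}$.

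First I would reduce everything to a power-series computation over $\mathbb{P}^\infty$. Since $c_{p^n}^{K(n)}(z_1\cdots z_{1+j(p^n-1)})$ has its diagonal coefficient $e_j$ a unit (Proposition~\ref{prop:constant_cpn}), the natural candidates for $\chi$ and $\psi$ are the degree-$(2p^n-1)$ parts of, respectively, a ``Newton-type'' combination $c_{p^n}^{K(n)} \circ c_{p^n-1}^{K(n)}$-like composite and an ``Adams-operation-type'' combination, each normalized so that the lower-degree $\gamma$-terms are killed. Concretely, using the modified Cartan formula of Theorem~\ref{th:chern_classes}(2) one can write down the generating function for $c_{tot}$ applied to a sum of $1+j(p^n-1)$ line-bundle classes; extracting the coefficient of $t^{2p^n-1}$ and then the multilinear monomial $z_1\cdots z_{1+j(p^n-1)}$ reduces the claim to evaluating an explicit residue via Corollary~\ref{cr:riemann-roch}, with the formal group law entering only through $\log_{K(n)}(t)=\sum_i p^{-i} t^{p^{in}}$ (with $v_n=1$). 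The $p$-adic valuations in the statement, namely $t_j = \nu_p(j-1)+1$ (resp.\ the $p=2$ variants) and $f_j \in p^{p^n}\Zp^\times$, should then fall out of a careful analysis of the $p$-adic valuations of binomial coefficients $\binom{p}{i}/p$ and of the coefficients $p^{-i}$ appearing when one expands $\log_{K(n)}$ and its inverse $\exp_{K(n)}$ — this is the same kind of elementary but delicate $p$-adic bookkeeping that produced $e_j$.

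The main obstacle, I expect, is precisely isolating the \emph{leading} (diagonal) coefficient and showing it is exactly $p^{t_j}$ times a unit rather than something of higher valuation: one must check that the various contributions from different monomials in the Cartan expansion do not conspire to cancel the term of minimal valuation. For $\psi$ the target valuation $p^n$ is large, so one needs to see that iterating the degree-$p$ structure of the formal group law exactly $n$ times (or, more precisely, that the relevant coefficient of $\exp_{K(n)}(p\cdot\log_{K(n)}(t))$-type expression) contributes valuation $\ge p^n$ while the unit part survives. For $h_j$ the subtlety is the jump in behaviour at $p=2$ and the dependence on $\nu_p(j-1)$, which I would handle by writing $j-1 = p^a u$ with $u$ coprime to $p$ and tracking how the factor $(j-1)$ (coming from a derivative of $z_1\cdots z_{1+j(p^n-1)}$ with respect to the ``collapsed'' variable, as in the classical computation $c_i\rho_{K_0} = (-1)^{i-1}(i-1)!$) interacts with the intrinsic valuation contributed by the Morava logarithm. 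A convenient technical device is to first prove the statement for $j=2$ by direct expansion — pinning down the universal unit — and then to propagate to general $j$ by comparing $c^{K(n)}_\bullet$ evaluated on $(\mathbb{P}^\infty)^{\times j}$ with its evaluation on $(\mathbb{P}^\infty)^{\times 2}$ through the multiplicativity of the Cartan formula, reducing the $j$-dependence to an explicit elementary factor.
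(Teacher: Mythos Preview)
The paper does not contain a proof of this proposition at all: it is quoted verbatim from \cite[Proposition~6.12]{Sech2} and simply used as a black box in Section~\ref{bounds}. So there is no ``paper's own proof'' to compare your attempt against. Whatever argument exists lives in the cited preprint \cite{Sech2}, not here.

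That said, a brief assessment of your sketch on its own terms. The strategic outline is reasonable: build $\chi$ and $\psi$ as explicit polynomials in the $c_i^{K(n)}$ with total gamma-degree $\ge 2p^n-1$, and compute the diagonal coefficient via the Riemann--Roch formula and the Cartan rule. The name of item~\eqref{item:coef_adams} strongly suggests that $\psi$ is an Adams-type operation, and indeed for a $p^n$-typical formal group law of height $n$ the $p$-series satisfies $[p]_{K(n)}(t)\equiv u\,t^{p^n}\bmod p$ with $u$ a unit, which is exactly the mechanism that should force $f_j\in p^{p^n}\Zp^\times$. So that half of your plan is on track.

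For $\chi$, however, your proposal is genuinely incomplete. You gesture at a ``Newton-type'' composite like $c_{p^n}^{K(n)}\circ c_{p^n-1}^{K(n)}$ and at a factor of $(j-1)$ arising from some derivative, but you do not specify the operation, and the delicate point --- that the valuation is \emph{exactly} $\nu_p(j-1)+1$ (or the $p=2$ variant) rather than merely bounded below by it --- is precisely what requires a concrete choice and a careful non-cancellation argument. Your ``convenient technical device'' of reducing general $j$ to $j=2$ via multiplicativity of Cartan is not obviously valid: the diagonal coefficient on $(\mathbb{P}^\infty)^{\times(1+j(p^n-1))}$ is not simply a product of lower-$j$ coefficients, because the Cartan formula mixes terms through the formal group law in a way that does not factor as you suggest. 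Without an explicit formula for $\chi$ and a direct valuation computation, this remains a plausibility argument rather than a proof.
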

\end{ntt}

\end{ntt}

\section{Some computations of the Morava $K$-theory}\label{moravaappl}

\begin{dfn}
Let $m\ge 2$ and let $\alpha\in H_{\et}^m(F,\mu_p^{\otimes m})$ be a non-zero pure symbol.
A motive ${R_m=(X,\pi)}$ in the category of Chow motives
with $\zz_{(p)}$-coefficients is called the {\it (generalized) Rost motive} for $\alpha$, if
it is indecomposable, splits as a sum of Tate motives over $F(X)$ and
for every field extension $K/F$ the following conditions are equivalent:
\begin{enumerate}
\item $(R_m)_K$ is decomposable;
\item $(R_m)_K\simeq\bigoplus_{i=0}^{p-1}\zz_{(p)}(b\cdot i)$ with $b=\frac{p^{m-1}-1}{p-1}$;
\item $\alpha_K=0\in H_{\et}^m(K,\mu_p^{\otimes m})$.
\end{enumerate}
\end{dfn}
The fields $K$ from this definition are called splitting fields of $R_m$.

The Rost motives were constructed by Rost and Voevodsky (see \cite{Ro07}, \cite{Vo11}). Namely, for all pure symbols $\alpha$
there exists a smooth geometrically irreducible projective $\nu_{m-1}$-variety $X$ (depending on $\alpha$) over $F$ such that the Chow motive
of $X$ has a direct summand isomorphic to $R_m$ and for every field extension $K/F$ the motive $(R_m)_K$
is decomposable iff $X_K$ has a $0$-cycle of degree coprime to $p$. The variety $X$ is called a {\it norm variety} of $\alpha$.
Moreover, it follows from \cite[Lemma~9.2]{Ya12} that for a given $\alpha$ the respective Rost motive is unique.

E.g., if $p=2$ and $\alpha=(a_1)\cup\ldots\cup(a_m)$ with $a_i\in F^\times$, then one can take for $X$
the projective quadric given by the equation $\langle\!\langle a_1,\ldots,a_{m-1}\rangle\!\rangle\perp\langle -a_m\rangle=0$,
where $\langle\!\langle a_1,\ldots,a_{m-1}\rangle\!\rangle$ denotes the Pfister form.
(We use the standard notation from the quadratic form theory as in \cite{Inv} and \cite{EKM}.)

As in Section~\ref{rostnil} using \cite[Section~2]{ViYa07}
one finds a unique lift of the Rost motive $R_m$ to the category of
$A^*$-motives for every oriented cohomology theory $A^*$.
We will denote this $A^*$-motive by the same letter $R_m$, since $A^*$ will be always clear from the context. Recall that by $\ta(l)$, $l\ge 0$, we denote the Tate motives in the
category of $A^*$-motives. If $A^*=\CH^*\otimes\zz_{(p)}$, we keep the usual notation $\ta(l)=\zz_{(p)}(l)$.

Moreover, it follows from \cite[Lemma~4.2]{GV18} that Rost nilpotence holds for $R_m$ with respect to every free theory $A^*$, since $R_m$ splits over the residue fields of all points of $X$.

\begin{prop}\label{pro62}
Let $p$ be a prime number, let $n\ge 0$ and $m\ge 2$ be integers and $b=\frac{p^{m-1}-1}{p-1}$.
For a non-zero pure symbol $\alpha\in H_{\et}^m(F,\mu_p^{\otimes m})$ consider the respective Rost motive $R_m$. Then
\begin{enumerate}
\item If $n<m-1$, then the $K(n)^*$-motive $R_m$ is a sum of $p$ Tate motives
$\oplus_{i=0}^{p-1}\ta(b\cdot i)$.
\item If $n=m-1$, then the $K(n)^*$-motive $R_m$ is a sum of the Tate motive $\ta$
and an indecomposable motive $L$ such that
\begin{equation}\label{form64}
K(n)^*(L)\simeq(\zz_{(p)}^{\oplus(p-1)}\oplus(\zz/p)^{\oplus{(m-2)(p-1)}})\otimes\zz_{(p)}[v_n,v_n^{-1}].
\end{equation}
For a field extension $K/F$ the motive $L_K$ is isomorphic to a direct sum of Tate motives iff the symbol $\alpha_K=0$. If $p>2$, then this is additionally equivalent to the condition that the motive $L_K$ is decomposable.
\item If $n>m-1$, then the $K(n)^*$-motive $R_m$ is indecomposable and $K(n)^*(R_m)$
is isomorphic to the group
\begin{equation}\label{form63}
\CH^*(R_m)\otimes\zz_{(p)}[v_n,v_n^{-1}]\simeq (\zz_{(p)}^{\oplus p}\oplus(\zz/p)^{\oplus{(m-2)(p-1)}})\otimes\zz_{(p)}[v_n,v_n^{-1}].
\end{equation}
For a field extension $K/F$ the motive $(R_m)_K$ is decomposable iff $\alpha_K=0$.
In this case $(R_m)_K$ is a sum of $p$ Tate motives.
\end{enumerate}
\end{prop}
\begin{proof}
Let $X$ be a norm variety of dimension $p^{m-1}-1$ for $\alpha$.
Denote by $\overline R_m$ the scalar extension of $R_m$ to its splitting field.
By \cite[Theorem~10.6]{Ya12} (cf. \cite[Theorem~3.5, Proposition~4.4]{ViYa07}) the restriction map for the Brown--Peterson theory $BP^*$
\begin{equation}\label{eq1}
\res\colon BP^*(R_m)\to BP^*(\overline R_m)=BP^{\oplus p}
\end{equation}
is injective, and the image equals 
\begin{equation}\label{eq2}
BP^*(R_m)\simeq BP\oplus I(m-1)^{\oplus (p-1)},
\end{equation}
where $I(m-1)$ is the ideal in the ring $BP=\zz_{(p)}[v_1,v_2,\ldots]$ generated by the elements
$\{v_0,v_1,\ldots, v_{m-2}\}$ where $v_0=p$. We remark that the article \cite[Theorem~10.6]{Ya12} deals with the bigraded version of the Brown--Peterson cohomology theory $ABP^{*,*'}$. Nevertheless, due to Levine's comparison result \cite{L05} Yagita identifies $ABP^{2*,*}$ with $BP^*$.

The projectors for the motive $R_m$ lie in the group $K(n)^{p^{m-1}-1}(R_m\otimes R_m)$, and by \cite[Theorem~4.4.7]{LM} the elements of
$K(n)^{p^{m-1}-1}(R_m\otimes R_m)$ are $\Zp$-linear combinations of elements of the form
\begin{equation}\label{summan}
v_n^s\cdot [Y\to X\times X],\quad s\in \ZZ,
\end{equation}
where $Y$ is a resolution of singularities of a closed irreducible subvariety of $X\times X$, and $-s(p^n-1)+\codim Y=p^{m-1}-1$.

(1) Assume first that $n<m-1$.
Since the ideal $I(m-1)$ contains $v_n$ for $n<m-1$ and $v_n$ is invertible in $K(n)$, we immediately get that all
elements in $K(n)^*(\overline R_m)$ are rational, i.e., are defined over the base field.

Therefore, since the motive $R_m$ is geometrically split, all elements in $K(n)^*(\overline R_m\otimes \overline R_m)$ are rational, and hence by Rost nilpotence
for $R_m$ this gives the first statement of the proposition.

(3) Let $n>m-1$. First of all, taking the tensor product $-\otimes_{BP(\Spec F)} K(n)$ with formula~\eqref{eq1} and
using \eqref{eq2} one immediately gets formula~\eqref{form63} for $K(n)^*(R_m)$ and $\CH^*(R_m)$.

We have $\dim X=p^{m-1}-1<p^n-1=-\deg v_n$.

Since every projector in ${K(n)^{p^{m-1}-1}(R_m\otimes R_m)}$ is a linear combination of elements of the form~\eqref{summan} and $\dim (X\times X)=2(p^{m-1}-1)$, we must have $s=0$ in all summands.
Therefore, every projector $\rho$ in $K(n)^{p^{m-1}-1}(R_m\otimes R_m)$ comes from the connective Morava $K$-theory $CK(n)^*$ (the connective Morava $K$-theory is a free oriented cohomology theory with the same formal group law as the Morava $K$-theory, but with the coefficient ring $\Zp[v_n]$). Thus, we have the following commutative diagram
\[
 \xymatrix{
 CK(n)^*(R_m\otimes R_m) \ar@{->}[r]\ar@{->}[d] & CK(n)^*(\overline R_m\otimes \overline R_m) \ar@{^{(}->}[d]\\
 K(n)^*(R_m\otimes R_m) \ar@{->}[r] & K(n)^*(\overline R_m\otimes \overline R_m)
 }
 \]
 and the rational projector $\bar\rho$ comes from some rational projector $\bar\tau\in CK(n)^*(\overline R_m\otimes \overline R_m)$. Note that the right vertical arrow is injective, since $\overline R_m$ is a direct sum of Tate motives.

By \cite[Section~2]{ViYa07} the $CK(n)^*$-motive $R_m$ is indecomposable, since so is the respective Chow motive. Therefore $\bar\tau$ is either zero or the identity projector. Therefore, so is $\bar\rho$ and, hence, by Rost nilpotence for $R_m$ so is the projector $\rho$. Therefore, the $K(n)^*$-motive $R_m$ is indecomposable.

(2) Assume now that $n=m-1$.
Since the $K(n)^*$-Euler characteristic of $X$ equals $u\cdot v_n$ for some
$u\in\zz_{(p)}^\times$ (see Section~\ref{eulerc}), the element ${v_n^{-1}\cdot u^{-1}(1\times 1)\in K(n)^*(R_m\otimes R_m)}$
is a projector defining the Tate motive $\ta$ where $1\times 1 \in K(n)^0(X\times X)$.
Note that this projector lies in $K(n)^*(R_m\otimes R_m)$, 
since this is true over a splitting field of $R_m$ and since $1\times 1$ is a rational element.
Thus, we get the decomposition $R_m\simeq\ta\oplus L$ for some motive $L$.

Taking the tensor product $-\otimes_{BP(\Spec F)} K(n)$ with formula~\eqref{eq1} and
using \eqref{eq2} one immediately gets formula~\eqref{form64} for $K(n)^*(L)$.

We claim now that $L$ is indecomposable. If $p=2$, then this is clear, since in this case $L$ over a splitting field of $R_m$ is a Tate motive. So, we assume that $p>2$.

We have $\dim X=p^{m-1}-1=p^n-1$. Since every projector in $K(n)^{p^n-1}(R_m\otimes R_m)$ is a linear combination of elements of the form~\eqref{summan} and $\dim (X\times X)=2(p^n-1)$, we must have $s=0$, $1$ or $-1$ in all summands.

If a projector contains a summand with $s=-1$, then by dimensional reasons this summand is up to a scalar of the form $v_n^{-1}(1\times 1)$. Subtracting this summand we obtain a rational element, say $\bar\sigma$, in $K(n)^{p^n-1}(\overline R_m\otimes \overline R_m)$ which comes from a rational element in $CK(n)^{p^n-1}(\overline R_m\otimes \overline R_m)$. To prove indecomposability of $L$ it is sufficient to prove its indecomposability modulo $p$.

We denote by $\Ch^*$ the Chow theory modulo $p$. The Chow motive $\overline R_m$ is a direct sum of Tate motives with pairwise distinct twists, the Chow motive $R_m$ is indecomposable over $F$ and some power of any rational cycle in $\End(\overline R_m)$ is a rational projector. Therefore, one can see that the only rational cycles in $\Ch^{p^n-1}(\overline R_m\otimes\overline R_m)$ are scalar multiples of the diagonal.

Thus, by dimensional reasons $\bar\sigma$ is of the form $a\Delta_{\overline R_m}+bv_n(\mathrm{pt}\times\mathrm{pt})$, where $a,b\in\zz/p$, $\Delta_{\overline R_m}$ is the diagonal of $\overline R_m$ and $\mathrm{pt}\times\mathrm{pt}$ is the class of a rational point on $\overline X\times \overline X$.

Therefore, the original rational projector in $K(n)^*(\overline R_m\otimes \overline R_m)$ is modulo $p$ of the form $$a\Delta_{\overline R_m}+bv_n(\mathrm{pt}\times\mathrm{pt})+cv_n^{-1}(1\times 1)$$
for some $c\in\zz/p$.
Composing this elements with itself and using that \begin{align*}
(\mathrm{pt}\times\mathrm{pt})\circ(1\times 1)&=1\times\mathrm{pt},\\
(1\times 1)\circ(\mathrm{pt}\times\mathrm{pt})&=\mathrm{pt}\times 1,\\
(\mathrm{pt}\times\mathrm{pt})\circ(\mathrm{pt}\times\mathrm{pt})&=0,\\
(1\times 1)\circ(1\times 1)&=u\cdot v_n (1\times 1),
\end{align*}
we obtain that this element is a projector only if $(a,b,c)=(0,0,0)$ (the trivial projector), $(a,b,c)=(1,0,0)$ (the diagonal), $(a,b,c)=(0,0,u^{-1})$ (the projector ${v_n^{-1}\cdot u^{-1}(1\times 1)}$), or $(a,b,c)=(1,0,-u^{-1})$ (the complementary projector ${\Delta_{\overline R_m}-v_n^{-1}\cdot u^{-1}(1\times 1)}$).
Thus, the motive $L$ is indecomposable.
\end{proof}

\begin{rem}
Recall that some generalized Rost motives appear as direct summands of motives of some twisted flag varieties (e.g. Pfister quadrics for $p=2$ or varieties of type $\FF_4$ for $(m,p)=(3,3)$ or $(5,2)$ or of type $\E_8$ for $(m,p)=(3,5)$; see \cite{Ro07}, \cite{NSZ09}, \cite{Mac09}, \cite[Section~7]{PSZ08}).
The above proposition demonstrates a difference between $K^0$ and the Morava $K(n)$-theory, when $n>1$. 
By \cite{Pa94} $K^0$
of all twisted flag varieties is $\zz$-torsion-free. This is not the case for $K(n)^*$, $n>1$.

Moreover, the same arguments as in the proof of the proposition
show that the connective $K$-theory $CK(1)^*$ (see \cite{Cai08}) of Rost motives $R_m$ for $m>2$ contains non-trivial $\zz$-torsion.
\end{rem}

\begin{rem}
The same proof shows that the Johnson--Wilson theory $E(n)^*$ of the Rost motive $R_m$ is split, if $n<m-1$. By definition, the coefficient ring of
the Johnson--Wilson theory $E(n)^*$ equals $\zz_{(p)}[v_1,\ldots,v_n][v_n^{-1}]$.
\end{rem}

\begin{rem}
The Chow groups of the Rost motives are known; see \cite[Theorem~5]{Ro90}, \cite[Theorem~8.1]{KM02}, \cite[Theorem~RM.10]{KM13}, \cite[Corollary~10.8]{Ya12},
\cite[Section~4.1]{Vish_Symm_2}.
\end{rem}

The proof of the following proposition is close to \cite[Section~8]{A12}.

\begin{prop}\label{moravaaffine2}
Let $A^*$ be an oriented
generically constant cohomology theory in the sense of Levine--Morel satisfying the localization axiom.
Let $Z$ be a smooth variety over a field $F$.
Assume that there exists a smooth projective variety $Y$ with invertible Euler characteristic
with respect to $A^*$ and such that for every point $y\in Y$ (not necessarily closed)
the natural pullback $$A^*(F(y))\to A^*(Z_{F(y)})$$ is an isomorphism.

Then the pullback of the structural morphism $Z\xrightarrow{\pi}\Spec F$
induces an isomorphism $$A^*(F)\xrightarrow{\sim} A^*(Z).$$
\end{prop}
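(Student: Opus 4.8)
The strategy is to use the projective variety $Y$ with invertible $A^*$-Euler characteristic as a "probe" that detects classes in $A^*(Z)$, together with the localization and generically-constant properties to reduce statements about $Z$ to statements over residue fields of points of $Y$. First I would observe that since $A^*$ is generically constant, for an irreducible variety $X$ the pullback $A^*(F(X))\to A^*(X)$ admits the structure map splitting, so it suffices to show the pullback $A^*(F)\to A^*(Z)$ is \emph{surjective}; injectivity is automatic because $Z\to\Spec F$ has a section after base change to $F(y)$ for any point $y$ (in particular $Z$ has a rational point over a field, or one argues via the Euler characteristic directly). The key point is surjectivity.

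For surjectivity, I would consider the projection $q\colon Y\times Z\to Y$ and the variety $Y\times Z$ fibered over $Y$. The hypothesis says that over the generic point $\eta$ of each irreducible component of $Y$, and indeed over every point $y\in Y$, the fiberwise pullback $A^*(F(y))\to A^*(Z_{F(y)})$ is an isomorphism. I would run a Noetherian induction / coniveau-type argument on $Y$: using the localization sequence for $A^*$ (which relates $A^*$ of an open subset, its closed complement, and the total space), one shows that the pullback $q^*\colon A^*(Y)\to A^*(Y\times Z)$ is an isomorphism. Concretely, stratify $Y$ so that on each stratum the fibration $Y\times Z\to Y$ "looks like a product" at the level of $A^*$ of function fields, then glue via localization exact sequences, exactly as in \cite[Section~8]{A12}. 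This gives $A^*(Y\times Z)\cong A^*(Y)$ compatibly with the projection to $Y$ and with pullback from $\Spec F$.

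Now I would use the invertibility of the Euler characteristic of $Y$. Let $\pi_Y\colon Y\to\Spec F$ be the structure map and let $\chi=\pi_{Y*}(1_Y)\in A^*(F)$, which is invertible by hypothesis. Given any $\alpha\in A^*(Z)$, pull back along the projection $p\colon Y\times Z\to Z$ to get $p^*\alpha\in A^*(Y\times Z)$. By the isomorphism $A^*(Y\times Z)\cong q^*A^*(Y)$ established above, and since $A^*(Y)$ itself decomposes (because $Y$ has invertible Euler characteristic one can produce a splitting of $A^*(F)\to A^*(Y)$, namely $\alpha\mapsto \chi^{-1}\pi_{Y*}(-)$ is a retraction), we can write $p^*\alpha = q^*(\beta)$ for a unique $\beta\in A^*(Y)$. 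Applying the pushforward $\pi_{Y*}$ along the first factor to the identity $p^*\alpha=q^*\beta$, and using the projection formula together with the base-change/commutativity of pushforward along $Y\times Z\to Z$ with $q$, one obtains $\chi\cdot\alpha = (\text{pushforward of }q^*\beta) = \pi^*(\text{something in }A^*(F))$ — more precisely, pushing forward $q^*\beta$ along $Y\times Z\to Z$ equals $\pi^*\pi_{Y*}\beta$ by flat base change, so $\chi\cdot\alpha\in\mathrm{Im}(\pi^*)$. Since $\chi$ is invertible in $A^*(F)\subset A^*(Z)$, we conclude $\alpha\in\mathrm{Im}(\pi^*)$, proving surjectivity and hence $A^*(Z)=A^*(F)$.

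\textbf{Main obstacle.} The delicate step is establishing $q^*\colon A^*(Y)\xrightarrow{\sim}A^*(Y\times Z)$ from the pointwise hypothesis: one must carefully set up the Noetherian induction on closed subsets of $Y$ and verify that the localization exact sequences for $A^*$ splice correctly, controlling both the total space $Y\times Z$ and the fibers. This requires that $A^*$ genuinely satisfies localization (which is assumed) and that "isomorphism over every residue field $F(y)$" propagates through the long exact sequences — the inductive claim should be phrased as "$A^*(U)\to A^*(U\times Z)$ is an isomorphism for every open $U\subseteq Y$," proved by descending induction on the codimension of the complement, with the base case being the generic points where the hypothesis applies directly. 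Getting the functoriality and the diagram chases exactly right, in the generality of an arbitrary generically-constant oriented theory satisfying localization, is where the real work lies; the final Euler-characteristic transfer argument is then formal.
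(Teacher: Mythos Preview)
Your approach is essentially the same as the paper's: use localization and induction on $Y$ to relate $A^*(Y)$ and $A^*(Y\times Z)$, then exploit the invertible Euler characteristic via the projection formula and base change to conclude. The paper's second diagram is exactly your transfer argument, and its first diagram is the inductive localization step.

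One point deserves correction. You claim $q^*\colon A^*(Y)\to A^*(Y\times Z)$ is an \emph{isomorphism}, but the localization sequence in this setting is only right exact, so the five-lemma is unavailable and injectivity does not propagate through the induction. The paper accordingly proves only \emph{surjectivity} of $q^*$ (which does follow from a simple diagram chase once the left and right vertical maps are surjective), and this is all you actually use: the word ``unique'' in ``$p^*\alpha=q^*(\beta)$ for a unique $\beta$'' is superfluous, since any preimage suffices for the pushforward computation $p_*(q^*\beta)=\pi^*(\pi_{Y})_*\beta$. Relatedly, the induction is more naturally phrased, as the paper does, on the \emph{dimension of closed subvarieties} $X'\subset Y$ (showing $A^*(X')\to A^*(Z\times X')$ is surjective for all such $X'$), rather than on open subsets; your formulation with opens and ``descending induction on codimension of the complement'' gets the direction tangled, since the base case should be the generic point and the goal is $U=Y$.
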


Before proving this proposition we prove the following lemma.

\begin{lem}\label{lem610}
Let $X$ be a variety over $F$, let $Z$ be a smooth variety over $F$ and let $A^*$ be as in Proposition~\ref{moravaaffine2}.
Assume that the natural pullback $A^*(F(x))\to A^*(Z_{F(x)})$ is an isomorphism
for every point $x\in X$.
Then the pullback $A^*(X)\to A^*(Z\times X)$ of the projection is surjective.
\end{lem}
\begin{proof}
We use the Borel--Moore homology theory associated with $A^*$ as explained
in Section~\ref{moore}.

Let $X_1,\ldots, X_l$ be the irreducible components of $X$ with generic points $x_1,\ldots,x_l$.
We have the following commutative diagram
\[
 \xymatrix{
\bigoplus_{i=1}^{l}\underset{X'\subset X_i}{\mathrm{colim}}\,A_*(X')
\ar@{->}[r]\ar@{->}[d] & A_*(X) \ar@{->}[r]\ar@{->}[d] & \bigoplus_{i=1}^{l}A_*(F(x_i)) \ar@{->}[r] \ar@{->}[d]&0\\
\bigoplus_{i=1}^{l}\underset{X'\subset X_i}{\mathrm{colim}}\,A_*(Z\times X')
\ar@{->}[r] & A_*(Z\times X) \ar@{->}[r] & \bigoplus_{i=1}^{l}A_*(Z_{F(x_i)}) \ar@{->}[r] &0
 }
 \]
where the vertical arrows are pullbacks of the respective projections, 
the colimits are taken over all closed codimension $\ge 1$
subvarieties of irreducible components of $X$, 
and the rows are exact by the localization property.

By the assumptions the right vertical arrow is an isomorphism.
Note that every closed subvariety $X'$ of $X$ satisfies the assumption of the lemma. 
Therefore, we can argue by induction on the dimension of varieties $X'$ that the left vertical arrow is surjective.
It follows by a diagram chase that the middle vertical arrow is surjective as well.
\end{proof}

\begin{proof}[Proof of Proposition~\ref{moravaaffine2}]
We omit gradings in the proof.

Let $a\colon Y\to\Spec F$ be the structural morphism, let $b\colon Z\times Y\to Y$ and $c\colon Z\times Y\to Z$
be the projections. Consider now the following commutative diagram:
$$
\xymatrix{
A(F) \ar@{->}[dd]^{\simeq}\ar@{->}[rd]^-{a^*}\ar@{->}[rrr]^-{\pi^*}& & & A(Z) \ar@{->}[ld]^-{c^*} \ar@{->}[dd]^{\simeq}\\
& A(Y) \ar@{->}[r]^-{b^*}\ar@{->}[ld]^-{a_*} & A(Z\times Y)\ar@{->}[rd]^-{c_*} &\\
A(F)\ar@{->}[rrr]^-{\pi^*} & & & A(Z)
}
$$
By Lemma~\ref{lem610} applied to the variety $Y$ the homomorphism $b^*$ is surjective. The left and the right vertical arrows
are isomorphisms, since they are multiplications by the $A^*$-Euler characteristic of $Y$ which is invertible.

Therefore, by a diagram chase the bottom horizontal arrow is surjective. But $A(F)$ is a direct summand of
$A(Z)$ because the theory $A^*$ is generically constant. Therefore, the bottom arrow is an isomorphism.
\end{proof}

Let $(a_1)\cup\ldots\cup(a_m)\in H_{\et}^m(F,\zz/2)$ be a pure symbol, $a_i\in F^\times$.
The quadratic form $q=\langle\!\langle a_1,\ldots,a_{m-1}\rangle\!\rangle\perp\langle -a_m\rangle$
is called a {\it norm form} and the respective projective quadric given by $q=0$ is called a (projective)
{\it norm quadric}. The respective {\it affine norm quadric} is an open subvariety of the projective norm quadric
given by the equation $$\langle\!\langle a_1,\ldots,a_{m-1}\rangle\!\rangle=a_m,$$ i.e. setting the last coordinate to $1$.

\begin{cor}\label{moravaaffine}
Let $0\le n<m-1$ and set $p=2$. Consider the affine norm quadric $X^{\aff}$ of dimension $2^{m-1}-1$ corresponding
to a pure symbol in $H_{\et}^m(F,\zz/2)$. Then the pullback of the structural morphism $X^{\aff}\xrightarrow{\pi}\Spec F$
induces an isomorphism $$K(n)^*(X^{\aff})=K(n)^*(F).$$
\end{cor}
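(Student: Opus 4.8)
The plan is to deduce Corollary~\ref{moravaaffine} from Proposition~\ref{moravaaffine2} by exhibiting an appropriate auxiliary variety $Y$. First I would recall the geometric setup: the affine norm quadric $X^{\aff}$ of dimension $2^{m-1}-1$ is an open subvariety of the projective norm quadric $Q$ associated to the symbol $\alpha=(a_1)\cup\ldots\cup(a_m)$, and the complement $Q\setminus X^{\aff}$ is the quadric $\langle\!\langle a_1,\ldots,a_{m-1}\rangle\!\rangle=0$, whose associated motive is closely tied to the Rost motive $R_m$. The key point, which I would isolate as a lemma (or extract from the literature on norm quadrics, e.g.\ the work of Orlov--Vishik--Voevodsky and the Rost motive construction), is that $X^{\aff}$ becomes ``cellular over every residue field where $\alpha$ vanishes'': precisely, for a field extension $L/F$ with $\alpha_L=0$ the form $q$ becomes isotropic and indeed a sum of hyperbolic planes plus a one-dimensional form, so the affine norm quadric $X^{\aff}_L$ is an affine space bundle over an affine space (or more carefully, it is $A^1$-equivalent to a point), hence $A^*(L)\xrightarrow{\sim} A^*(X^{\aff}_L)$ for any oriented theory $A^*$ satisfying homotopy invariance and localization.

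Next I would choose $Y$ to be a $\nu_n$-variety for the symbol $(a_1)\cup\ldots\cup(a_{n+1})$ --- for $p=2$ one can take the projective quadric defined by $\langle\!\langle a_1,\ldots,a_n\rangle\!\rangle\perp\langle -a_{n+1}\rangle=0$, of dimension $2^n-1$. By the discussion in Section~\ref{eulerc} its $K(n)^*$-Euler characteristic is $v_n\cdot u\cdot s_{2^n-1}(Y)$ with $u\in\F2^\times$, and since $Y$ is a $\nu_n$-variety this Milnor number is nonzero modulo $2$, so the Euler characteristic is invertible. I must then check the hypothesis of Proposition~\ref{moravaaffine2}: for every point $y\in Y$ the pullback $K(n)^*(F(y))\to K(n)^*(X^{\aff}_{F(y)})$ is an isomorphism. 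For this it suffices that $\alpha$ dies in $H^m_{\et}(F(y),\zz/2)$ for every (not necessarily closed) point $y$ of $Y$. Here is where the condition $n<m-1$ enters: $Y$ is a splitting variety (indeed a norm variety) for the sub-symbol $(a_1)\cup\ldots\cup(a_{n+1})$, so over $F(Y)$ this sub-symbol vanishes; but since $n+1\le m-1<m$, the sub-symbol $(a_1)\cup\ldots\cup(a_{n+1})$ divides $\alpha$ in the sense that $\alpha=(a_1)\cup\ldots\cup(a_{n+1})\cup\beta$, so $\alpha$ vanishes over $F(Y)$ as well, and similarly over every residue field $F(y)$ since $Y$ splits over all of them (it is a norm variety, so it splits over the residue fields of all its points).

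Combining these: for each $y\in Y$ we have $\alpha_{F(y)}=0$, hence by the lemma above the affine norm quadric $X^{\aff}_{F(y)}$ is $A^1$-contractible over $F(y)$, so $K(n)^*(F(y))\to K(n)^*(X^{\aff}_{F(y)})$ is an isomorphism; and $Y$ has invertible $K(n)^*$-Euler characteristic. Proposition~\ref{moravaaffine2}, applied with $A^*=K(n)^*$ (which is generically constant, satisfies localization and homotopy invariance as a free theory in the sense of Levine--Morel), then yields $K(n)^*(X^{\aff})=K(n)^*(F)$, as claimed.

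The main obstacle I anticipate is verifying cleanly that for every point $y$ of the chosen $\nu_n$-variety $Y$ the affine norm quadric $X^{\aff}_{F(y)}$ has the cohomology of a point --- i.e.\ the two-part claim that (a) $\alpha$ vanishes over all residue fields $F(y)$ (which requires knowing that a norm variety for $(a_1)\cup\ldots\cup(a_{n+1})$ splits over the residue fields of \emph{all} its points, the same input used for Rost nilpotence of Rost motives in the excerpt), and (b) that vanishing of $\alpha$ over a field $L$ forces $X^{\aff}_L$ to be $A^1$-equivalent to $\Spec L$ --- one must argue that an isotropic norm form, after splitting off hyperbolic planes, leaves an affine quadric of the form $\langle\!\langle\,\rangle\!\rangle=a_m$ over a Witt-equivalent form, and that such a smooth affine quadric with a rational point is a vector-bundle torsor over affine space hence has trivial reduced cohomology. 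Both facts are standard in the theory of norm quadrics but deserve an explicit citation or a short argument; everything else is a formal consequence of Proposition~\ref{moravaaffine2}.
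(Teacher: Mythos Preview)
Your overall strategy matches the paper's: same auxiliary variety $Y$ (the norm quadric for the subsymbol $(a_1)\cup\ldots\cup(a_{n+1})$), same Euler-characteristic check, same appeal to Proposition~\ref{moravaaffine2}. The difference is in how you verify that $K(n)^*(F(y))\to K(n)^*(X^{\aff}_{F(y)})$ is an isomorphism, and here your argument has a genuine gap.

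You reduce to the claim ``$\alpha_L=0$ implies $q_L$ is a sum of hyperbolic planes plus a one-dimensional form.'' This is false: $\alpha_L=0$ only says the norm form $q_L$ is \emph{isotropic}, not completely split. For instance with $m=3$ one can have $\alpha_L=0$ while $\langle\!\langle a_1,a_2\rangle\!\rangle$ remains anisotropic over $L$, so $q_L$ has Witt index~$1$ and $X^{\aff}_L$ is not the split affine quadric. What you actually have over $F(y)$ is stronger than $\alpha=0$: the subsymbol $(a_1)\cup\ldots\cup(a_{n+1})$ vanishes (since $Y_{F(y)}$ is isotropic), hence the Pfister form $\langle\!\langle a_1,\ldots,a_{n+1}\rangle\!\rangle$ is hyperbolic, hence its multiple $\langle\!\langle a_1,\ldots,a_{m-1}\rangle\!\rangle$ is hyperbolic, and therefore $q$ is \emph{completely} split over $F(y)$. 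This is exactly what the paper uses.

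A second, smaller point: even in the completely split case the affine norm quadric is \emph{not} $\mathbb{A}^1$-contractible (for $m=2$ it is $\mathbb{G}_m$; more generally the split odd-dimensional affine quadric is $\mathbb{A}^1$-equivalent to $\mathbb{A}^k\setminus\{0\}$). So the ``$\mathbb{A}^1$-equivalent to a point / vector-bundle torsor'' justification does not work. The paper instead observes that both the projective norm quadric $Q$ and its closed complement $Q'=Q\setminus X^{\aff}$ (a Pfister quadric) become cellular over $F(y)$, cites \cite[Theorem~A.4]{Ka01} for $\CH^*(X^{\aff}_{F(y)})=\zz$, and then invokes \cite[Section~4.5.2]{LM} to lift this to $\Omega^*(X^{\aff}_{F(y)})=\laz$, whence $K(n)^*(X^{\aff}_{F(y)})=K(n)^*(F(y))$. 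Replacing your $\mathbb{A}^1$-homotopy sketch with this computation (and using the complete splitting rather than mere isotropy) makes your argument go through and coincide with the paper's.
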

\begin{proof}
Let $\alpha:=(a_1)\cup\ldots\cup(a_m)\in H_{\et}^m(F,\zz/2)$ be our pure symbol, $a_i\in F^\times$,
$q$ the norm form for $\alpha$, and $Q$ the respective projective norm quadric given by $q=0$.
Let $Y$ be the projective norm quadric of dimension $2^n-1$ corresponding to the subsymbol
$$(a_1)\cup\ldots\cup(a_{n+1})\in H_{\et}^{n+1}(F,\zz/2).$$

We need to check the conditions of Proposition~\ref{moravaaffine2}.
By the choice of $Y$ it is a $\nu_n$-variety (see e.g. \cite[Section~2]{Sem13}). Therefore, by Section~\ref{eulerc}
its $K(n)^*$-Euler characteristic is invertible.

Moreover, the quadratic form $q$ is split completely over $F(y)$ for any point $y$ of $Y$.
In particular, $X^{\aff}_{F(y)}$ is a split odd-dimensional affine quadric.
The complement $Q':=Q\setminus X^{\aff}$
is the projective Pfister quadric $\langle\!\langle a_1,\ldots,a_{m-1}\rangle\!\rangle=0$ of dimension $2^{m-1}-2$, and both $Q$ and $Q'$ are
split over $F(y)$.

Let $W$ be a split affine quadric of odd dimension $2k-1\ge 1$.
Then it is well known that $W$ is given by the equation $\sum_{i=1}^k x_iy_i=1$ in the affine space $\mathbb{A}^k\times \mathbb{A}^k$, and the projection $W\to\mathbb{A}^k\setminus\{0\}$, $(x,y)\mapsto y$, is a rank $(k-1)$ affine bundle over $\mathbb{A}^k\setminus\{0\}$.
Therefore, by homotopy invariance $K(n)^*(X^{\aff}_{F(y)})=K(n)^*(W)=K(n)^*(\mathbb{A}^k\setminus\{0\})=K(n)^*(F(y))$ with $k=2^{m-2}$.
We are done.
\end{proof}

\begin{rem}
In the proof of Corollary~\ref{moravaaffine}
the motive of $W$ in the category $DM$ of Voevodsky (see \cite[Lecture~14]{MVW}) is isomorphic by homotopy invariance to the motive of $\mathbb{A}^k\setminus\{0\}$.
The Gysin exact triangle \cite[14.5.5]{MVW} immediately implies that the motive of $\mathbb{A}^k\setminus\{0\}$ is isomorphic to $\zz\oplus\zz(k)[2k-1]$.
In particular, the motive of $X^{\aff}$ in the category $DM$ of motives of Voevodsky is not a Tate motive even if the base field is algebraically closed.
\end{rem}

Let now $B$ be a central simple $F$-algebra of a prime degree $p$ and $c\in F^\times$.
Consider the Merkurjev--Suslin variety
$$\MS(B,c)=\{\alpha\in B\mid \Nrd(\alpha)=c\},$$
where $\Nrd$ stands for the reduced norm on $B$.

\begin{cor}
In the above notation the structural morphism induces an isomorphism $$A^*(\MS(B,c))\simeq A^*(F),$$ when $A$ is Grothendieck's $K^0$ or
the first Morava $K$-theory with respect to the prime $p$.
\end{cor}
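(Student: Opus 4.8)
The plan is to deduce this from Proposition~\ref{moravaaffine2}, applied with $Z=\MS(B,c)$ and with auxiliary variety $Y=\SB(B)$, the Severi--Brauer variety of $B$ (so $\dim Y=p-1$). First one observes that $\MS(B,c)$ is smooth: over a splitting field of $B$ it becomes $\{M\in M_p\mid\det M=c\}$, which for $c\in F^\times$ is isomorphic to $\SL_p$, hence smooth; and $A^*$ is a free theory for $A=K^0$ and for $A=K(1)^*$, so it is generically constant and satisfies the localization property. Next I would check that the $A^*$-Euler characteristic of $\SB(B)$ is invertible. Since $\SB(B)$ is geometrically irreducible and becomes $\mathbb{P}^{p-1}$ over a splitting field $E$ of $B$, and since $A^*(\Spec F)\to A^*(\Spec E)$ is an isomorphism for these theories (the coefficient ring does not depend on the base field), this Euler characteristic equals $[\mathbb{P}^{p-1}]\in A^*(\Spec F)$, which by the standard formula expressing $[\mathbb{P}^{p-1}]$ through the logarithm of $\FGL_A$ equals $p$ times the coefficient of $t^p$ in that logarithm: a unit multiple of $v_1^{p-1}$ for $A=K^0$, and $a_1v_1$ up to a power of $v_1$ with $a_1\in\Zp^\times$ for $A=K(1)^*$. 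In both cases it is invertible. (For $K(n)^*$ with $n\ge2$ this coefficient vanishes, which is exactly why the statement is restricted to $n\le1$.)

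The substantive point is the last hypothesis of Proposition~\ref{moravaaffine2}: that $A^*(F(y))\to A^*(\MS(B,c)_{F(y)})$ is an isomorphism for every point $y\in\SB(B)$. Any such $y$ gives an $F(y)$-rational point of $\SB(B)_{F(y)}$, so $B_{F(y)}$ splits and $\MS(B,c)_{F(y)}\cong\{M\in M_p(F(y))\mid\det M=c\}\cong\SL_{p,F(y)}$; hence it suffices to prove $A^*(\SL_{p,L})=A^*(\Spec L)$ for every field $L$. I would do this in two steps. First, the determinant exhibits $\GL_p$ as $\SL_p\times\Gm$ as a variety (the sequence $1\to\SL_p\to\GL_p\to\Gm\to1$ splits via $t\mapsto\mathrm{diag}(t,1,\dots,1)$), and $A^*(Y\times\Gm)=A^*(Y)$ for any smooth $Y$ and any oriented theory: in the localization sequence for $Y\times\{0\}\hookrightarrow Y\times\mathbb{A}^1$ with open complement $Y\times\Gm$ the push-forward of $1$ is the first Chern class of the principal divisor $Y\times\{0\}$, hence $0$, so with homotopy invariance one obtains the identity. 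Second, $\GL_{p,L}$ is the complement in $\mathbb{A}^{p^2}_L$ of the hypersurface $\{\det=0\}$, and in the localization sequence $\Omega_*(\{\det=0\})\to\Omega_*(\mathbb{A}^{p^2}_L)\to\Omega_*(\GL_{p,L})\to0$ the first arrow vanishes: identifying $\Omega_*(\mathbb{A}^{p^2}_L)$ with $\Omega_*(\Spec L)$ by restriction to an $L$-point avoiding $\{\det=0\}$ (say the identity matrix), every class pushed forward from $\{\det=0\}$ restricts to $0$. Therefore $\Omega^*(\GL_{p,L})=\Omega^*(\mathbb{A}^{p^2}_L)=\laz$, and tensoring with the coefficient ring of $A^*$ gives $A^*(\GL_{p,L})=A^*(\Spec L)$, hence $A^*(\SL_{p,L})=A^*(\Spec L)$.

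All the hypotheses of Proposition~\ref{moravaaffine2} being verified, it yields $A^*(\MS(B,c))=A^*(F)$. The main obstacle I anticipate is the computation of $A^*(\SL_{p,L})$ --- in particular making sure the $\Gm$-factor and the singular hypersurface $\{\det=0\}$ contribute nothing; the remaining verifications are routine.
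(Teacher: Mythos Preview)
Your proposal is correct and follows essentially the same approach as the paper: apply Proposition~\ref{moravaaffine2} with $Y=\SB(B)$, check invertibility of the Euler characteristic of $\SB(B)$, and reduce the fibrewise condition to $\Omega^*(\SL_p)=\laz$ via $\Omega^*(\GL_p)=\laz$ and the product decomposition $\GL_p\cong\SL_p\times\Gm$. The only cosmetic differences are that the paper cites $\SB(B)$ being a $\nu_1$-variety (rather than invoking Mishchenko's formula) and passes from $\GL_p$ to $\SL_p$ by the retraction $\SL_p\hookrightarrow\GL_p\to\SL_p\times\Gm\to\SL_p$ rather than by your $A^*(-\times\Gm)=A^*(-)$ computation.
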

\begin{proof}
Let $Y=\SB(B)$ denote the Severi--Brauer variety of $B$. We need to check the conditions of Proposition~\ref{moravaaffine2}.
The variety $Y$ is a geometrically cellular $\nu_1$-variety (see \cite[Section~7.2]{Me03}).
Thus, by Section~\ref{eulerc} its $A^*$-Euler characteristic is invertible.

Over a point $y\in Y$ the variety $\MS(B,c)$ is isomorphic to $\SL_p$, since $\MS(B,c)$ over $F(y)$
is an $\SL_p$-torsor over $F(y)$ and $H^1_{\et}(F(y),\SL_p)$ is trivial. Since $\GL_p$ is an open subvariety in
$\mathbb{A}^{p^2}$, by the localization sequence $\Omega^*(\GL_p)=\laz$. Moreover, $\GL_p$ is isomorphic
as a variety (not as a group scheme) to $\SL_p\times\mathbb{G}_m$ with the isomorphism sending a matrix $\alpha$
to the pair $(\beta,\det\alpha)$ where $\beta$ is obtained from $\alpha$ by dividing its first row by $\det\alpha$.
The composite morphism $$\SL_p\hookrightarrow\GL_p\xrightarrow{\simeq}\SL_p\times \mathbb{G}_m\to\SL_p,$$ where the first morphism
is the natural embedding and the last morphism is the projection, is the identity. Taking pullbacks in this
sequence, one gets that $\Omega^*(\SL_p)=\laz$ and, hence,
$A^*(\SL_p)=A^*(F(y))$ for $A^*$ as in the statement of the present corollary. We are done.
\end{proof}

Let $J$ be an Albert algebra over $F$ (see \cite[Chapter~IX]{Inv}) and $N_J$ denote the cubic norm form on $J$.
For $d\in F^\times$ consider the variety $$Z=\{\alpha\in J\mid N_J(\alpha)=d\}.$$ The group $G$ of isometries
of $N_J$ is a group of type $^1{}\E_6$ and it acts on $Z$ geometrically transitively. Note also that $Z$ is in general anisotropic.

\begin{cor}
In the above notation the natural map 
$K^0(F)\to K^0(Z)$ is an isomorphism.
\end{cor}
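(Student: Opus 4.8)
The plan is to apply Proposition~\ref{moravaaffine2} to the theory $A^*=K^0$, to the variety $Z$, and to the auxiliary variety $Y$ equal to the variety of Borel subgroups of $G$. To check the Euler characteristic hypothesis, note that $Y$ is smooth, projective, geometrically irreducible and geometrically cellular: over $\Fsep$ it becomes the variety of Borel subgroups of the split form of $G$, which is cellular by the Bruhat decomposition. Hence by Section~\ref{eulerc} its $K^0$-Euler characteristic equals $1$, which is invertible.

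Next I would reduce the remaining hypothesis of Proposition~\ref{moravaaffine2} to the split case. For any point $y\in Y$ the canonical morphism $\Spec F(y)\to Y$ through $y$ shows that $G_{F(y)}$ admits a Borel subgroup over $F(y)$, so it is quasi-split; being of inner type $\E_6$ it is then split over $F(y)$. Therefore $N_J$ becomes over $F(y)$ the norm form of the split Albert algebra $J_0$, and $Z_{F(y)}$ is isomorphic to the base change of $Z_0:=\{\alpha\in J_0\mid N_{J_0}(\alpha)=d\}$. (Here one also uses that the similitude group of $N_{J_0}$ surjects onto $\Gm$ with kernel the simply connected isometry group $G_0$, and that $H^1(E,G_0)$ is trivial for every field $E$; this makes the level sets $\{N_{J_0}=c\}$ mutually isomorphic over every field, so $Z_0$ does not depend on $d$ up to isomorphism.) Thus it suffices to prove that for every field $K$ the structural morphism induces an isomorphism $K^0(K)\xrightarrow{\sim}K^0((Z_0)_K)$.

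This last point is the heart of the matter. The idea is to compare $Z_0$ with the variety $J_0^{\times}=\{N_{J_0}\ne 0\}$ of invertible elements, which is the complement in $J_0\cong\mathbb{A}^{27}$ of the norm hypersurface. By the $G$-theory localization sequence and homotopy invariance of $G$-theory — the image of $G_0(\{N_{J_0}=0\})$ in $G_0(\mathbb{A}^{27})=\ZZ$ is zero, being generated by classes of torsion sheaves — one obtains $K^0(J_0^{\times})=K^0(K)$. The norm map realizes $J_0^{\times}$ as a Zariski-locally trivial $Z_0$-bundle over $\Gm$: writing $J_0^{\times}$ as a homogeneous space of the similitude group of $N_{J_0}$ with point stabilizer a copy of $\FF_4$ contained in $G_0$, and noting that the multiplier character of the similitude group admits a cocharacter section, one sees that the underlying $G_0$-torsor over $\Gm$ is trivial. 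Trivializing the $Z_0$-bundle over an affine open cover of $\Gm$ and using a Mayer--Vietoris argument for $K^0$, the claim reduces to the identity $K^0(Z_0\times\Gm)=K^0(Z_0)$, which holds because negative $K$-theory of the regular scheme $Z_0$ vanishes. Hence $K^0((Z_0)_K)=K^0(J_0^{\times})=K^0(K)$. Combining this with the first two steps, the hypotheses of Proposition~\ref{moravaaffine2} are satisfied, and it yields $K^0(Z)=K^0(F)$.

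The main obstacle is precisely this third step: establishing $K^0$-triviality of the split norm hypersurface $Z_0$, which geometrically is $\E_6/\FF_4$ and is not cellular. One cannot argue, as in the earlier corollaries, by reducing to a cellular variety; the argument is genuinely special to $K^0$ — for instance it does not transfer to $K(n)^*$ with $n\ge 2$, since the variety of Borel subgroups of $\E_6$ need not have invertible $K(n)^*$-Euler characteristic — and the bundle/localization comparison above is the part that requires care.
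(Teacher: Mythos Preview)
Your reduction via Proposition~\ref{moravaaffine2} with $Y$ the Borel variety of $G$ matches the paper exactly; the divergence lies only in the split-fibre computation $K^0(\E_6/\FF_4)\cong K^0(F(y))$. The paper argues via representation theory: by the branching rules the restriction $R(\E_6)\to R(\FF_4)$ of representation rings is surjective, so Merkurjev's spectral sequence yields $K^0(\E_6/\FF_4)\cong K^0(F(y))\otimes_{R(\E_6)}R(\FF_4)\cong K^0(F(y))$ in one step. Your geometric route through $J_0^\times$ and localisation in $\mathbb{A}^{27}$ is a valid and interesting alternative that avoids representation rings, at the cost of some structure theory of the similitude group.

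One assertion in your argument is wrong, although the conclusion survives: $H^1(E,G_0)$ is certainly \emph{not} trivial for every field $E$, since $G_0$ is the simply connected split $\E_6$, which has many non-trivial torsors. What you actually need is only that the multiplier $\mu\colon\tilde G(E)\to E^\times$ is surjective on points, and this already follows from your cocharacter section. That section in turn is correct but deserves a word of justification, since scalars have multiplier $\lambda^3$ rather than $\lambda$; the cleanest reason is that any surjection of a connected reductive group onto $\mathbb{G}_m$ restricts to a surjection of maximal tori, hence of cocharacter lattices onto $\mathbb{Z}$, and such a surjection splits. With the section in hand the bundle $J_0^\times\to\mathbb{G}_m$ is \emph{globally} trivial, so the Mayer--Vietoris detour is unnecessary and your computation concludes directly from $K^0(J_0^\times)=K^0(K)$ together with $K^0(Z_0\times\mathbb{G}_m)=K^0(Z_0)$.
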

\begin{proof}
Let $Y$ be the variety of Borel subgroups of the group $G$. We need to check the conditions of
Proposition~\ref{moravaaffine2}. The $K^0$-Euler characteristic of $Y$ is invertible, since $Y$ is
geometrically cellular.

Let $y\in Y$ be a point. Then $G$ splits over $F(y)$, the variety $Z$ has a rational point over $F(y)$
and its stabilizer is the split group of type $\FF_4$, i.e., $Z$ is isomorphic to $\E_6/\FF_4$ over $F(y)$,
where $\E_6$ and $\FF_4$ stand for the split groups of the respective Dynkin types.
Finally, by \cite[Theorem~2]{Y16} $K^0(\E_6/\FF_4)\simeq K^0(F(y))$.
We are done.
\end{proof}
\begin{rem}
In \cite{Y16} Yakerson computes the whole higher $K$-theory of
twisted forms of $\E_6/\FF_4$ by means of cocycles from $Z^1(F,\FF_4)$. Note that such twisted forms are isotropic.
\end{rem}

Consider the Witt-ring of the field $F$ and denote by $I$ its fundamental ideal.

\begin{prop}\label{moravaproj}
Let $m\ge 2$ and set $p=2$.
A non-degenerate even-dimensional quadratic form $q$ belongs to $I^m$ iff the $K(n)^*$-motive of the respective projective quadric is split for all $0\le n<m-1$.
\end{prop}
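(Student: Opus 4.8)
Throughout the plan I may assume that $q$ is anisotropic. Indeed, if $q\simeq\mathbb H^{\oplus j}\perp q_{\mathrm{an}}$ then $q\in I^m$ iff $q_{\mathrm{an}}\in I^m$, and the standard motivic decomposition of an isotropic quadric shows that, in any free theory, the motive of the quadric $Q$ of $q$ is the motive of the quadric of $q_{\mathrm{an}}$ together with a sum of Tate motives, so one is split iff the other is. I also use that Rost nilpotence holds for the $K(m-2)^*$-motive of the quadric $Q$ (a smooth projective quadric; see \cite{CGM05}, \cite{GV18}), so that a motivic decomposition obtained over a field extension descends to $F$ once the relevant projectors are $F$-rational.

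First I would prove that $q\in I^m$ implies the $K(m-2)^*$-motive of $Q$ splits. If $\dim q=2^m$, then by the Arason--Pfister Hauptsatz $q$ is similar to an $m$-fold Pfister form, so $Q$ is projectively the associated Pfister quadric, and its Chow motive --- hence, via the canonical lift of Section~\ref{rostnil}, its $\Omega^*$- and $K(m-2)^*$-motive --- is a direct sum of Tate twists $\bigoplus_i R_m(i)$ of the Rost motive $R_m$ (Rost; see also \cite{KM02}). Since $m-2<m-1$, Proposition~\ref{pro62}(1) gives that the $K(m-2)^*$-motive of $R_m$ is split, hence so is that of $Q$. For arbitrary $q\in I^m$ I would write $q=\pi_1+\dots+\pi_N$ in the Witt ring with each $\pi_i$ a scalar multiple of an $m$-fold Pfister form and induct on $N$, the cases $N\le 1$ being the Pfister case. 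For $N\ge 2$ put $E=F(Q_{\pi_1})$; then $\pi_1$ is hyperbolic over $E$, so the anisotropic part of $q_E$ has Witt class $(\pi_2+\dots+\pi_N)_E\in I^m(E)$, represented by at most $N-1$ Pfister forms, and by induction the $K(m-2)^*$-motive of $Q_E$ is split. It remains to descend this splitting along $E/F$. By Rost nilpotence for $Q\times Q$ the kernel of $K(m-2)^*(Q\times Q)\to K(m-2)^*((Q\times Q)_E)$ consists of nilpotent correspondences, and since $E$ is the function field of a \emph{Pfister} quadric this restriction kernel can be controlled --- on Chow groups of products of quadrics it is classically understood, and it can be transported to the $\gamma$-graded pieces of $K(m-2)^*$ using the operations $c_i^{\CH}$ and $c_i^{K(n)}$ of Section~\ref{sec:chern_classes} --- which lets one lift the Tate projectors from $E$ to $F$ and conclude.

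Next I would prove the converse by contraposition. Assume $q\notin I^m$. Since $q$ is even-dimensional it lies in $I^1$, so there is $j$ with $1\le j\le m-1$ and $q\in I^j\setminus I^{j+1}$; by Proposition~\ref{prop:height_tate_motive} a split $K(m-2)^*$-motive of $Q$ forces a split $K(j-1)^*$-motive of $Q$, so it suffices to show that $q\in I^j\setminus I^{j+1}$ implies the $K(j-1)^*$-motive of $Q$ is not split. For $j=1$ this is elementary: a nontrivial discriminant makes the two families of maximal linear subspaces of $\overline Q$ Galois-conjugate, so $\CH^*(Q)\ot\QQ\to\CH^*(\overline Q)\ot\QQ$ fails to be surjective in the middle degree and the $K(0)^*=\CH^*\ot\QQ$-motive of $Q$ is not split. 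For $j\ge 2$, the image of $q$ in $I^j/I^{j+1}\simeq H^j_{\et}(F,\zz/2)$ is nonzero, and after replacing $F$ by a suitable extension I may assume this image is a nonzero pure symbol $\sigma$ and that $q$ is similar to the $j$-fold Pfister form $\langle\!\langle\sigma\rangle\!\rangle$ --- first make the image a nonzero symbol, then split the remaining class in $I^{j+1}$ by a generic splitting chosen so as not to kill $\sigma$ --- so that $Q$ becomes the Pfister quadric of $\langle\!\langle\sigma\rangle\!\rangle$; a split $K(j-1)^*$-motive stays split under this extension. Then the $K(j-1)^*$-motive of $Q$ is $\bigoplus_i\bigl(K(j-1)^*\text{-motive of }R_j\bigr)(i)$, and Proposition~\ref{pro62}(2) applied with parameters $m=j$, $n=j-1$ (the critical height) identifies this with $\bigoplus_i(\ta\oplus L')(i)$, with $L'$ indecomposable and not split since $\sigma\ne 0$; hence the $K(j-1)^*$-motive of $Q$ is not a sum of Tate motives, a contradiction.

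I expect the main obstacle to be the general, non-Pfister case of the first implication: the Chow motive of a quadric from $I^m$ need not be a sum of Tate twists of Rost motives --- Albert quadrics already have indecomposable motives --- so one genuinely has to descend a Morava-motivic splitting along function fields of Pfister quadrics, and carrying this out rigorously (controlling the restriction kernel on $K(m-2)^*$ of $Q\times Q$, equivalently lifting the Tate projectors) is where the real work lies. The parallel reduction to the Pfister case by scalar extension in the converse direction is softer but still needs care in arranging that the relevant symbol survives the extension.
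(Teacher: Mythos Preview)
Your converse direction is essentially the paper's argument: the paper invokes \cite[Theorem~2.10]{OVV07} directly to find a field extension $K$ over which $q_K$ becomes (up to hyperbolics) an anisotropic $s$-fold Pfister form, and then applies Proposition~\ref{pro62}(2) to the resulting Rost-motive summand. Your more elaborate reduction (separating $j=1$, arranging the symbol to survive) reaches the same endpoint and is fine.

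The forward direction, however, has a genuine gap at exactly the place you flag. Rost nilpotence controls the \emph{kernel} of $K(m-2)^*(Q\times Q)\to K(m-2)^*((Q\times Q)_E)$; it does not produce preimages of the Tate projectors. What you need is \emph{surjectivity} of this restriction, and your sketch (``classically understood on Chow groups, transported via $c_i^{\CH}$, $c_i^{K(n)}$'') is not an argument: the operations of Section~\ref{sec:chern_classes} relate $\gr_\gamma K(n)^*$ to $\CH^*$ in small codimension, but they do not invert the restriction map along $F\to F(Q_{\pi_1})$.

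The paper's idea here is different and is the missing ingredient. Rather than passing to the function field of the large projective Pfister quadric $Q_{\pi_1}$ and attempting to descend, the paper fixes $n<m-1$ and takes the \emph{affine norm quadric} $X^{\aff}$ of dimension $2^n-1$ attached to a subsymbol of $\pi_1$ of degree $n+1$. The point is Corollary~\ref{moravaaffine}: $K(n)^*(X^{\aff})=K(n)^*(F)$. A diagram chase with localization sequences (as in the proof of Proposition~\ref{moravaaffine2}) then shows that the pullback $K(n)^*(Q\times Q)\to K(n)^*(X^{\aff}\times Q\times Q)$ is surjective, and composing with the surjection to $K(n)^*((Q\times Q)_{F(X^{\aff})})$ and the inductive hypothesis yields surjectivity of $K(n)^*(Q\times Q)\to K(n)^*((Q\times Q)_K)$ for a splitting field $K$. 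Now Rost nilpotence applies. In short: the descent is achieved not by analyzing a large extension, but by exploiting that a carefully chosen \emph{small} affine quadric is $K(n)^*$-acyclic.
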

\begin{rem}
Note that by Proposition~\ref{prop:height_tate_motive} below the $K(m-2)^*$-motive of the respective quadric splits iff its $K(n)^*$-motive is split for all $0\le n<m-1$. We do not use this in the proof of Proposition~\ref{moravaproj}.
\end{rem}
\begin{proof}[Proof of Proposition~\ref{moravaproj}]
Note that the statement of the proposition is clear for $m=2$, since $K(0)^*$ is defined as $\CH\otimes\qq$ and $I^2$ consists of all non-degenerate even-dimensional quadratic forms with trivial discriminant. Therefore, $q$ belongs to $I^2$ iff the $K(0)^*$-motive of the respective projective quadric is split. So, we assume that $m>2$. It is also clear that it suffices to proof the proposition for $0<n<m-1$.

Assume that $q$ does not belong to $I^m$. Let $1\le s<m$ be the maximal integer with $q\in I^s$, and assume that the $K(s-1)^*$-motive of the respective quadric is split. If $s=1$, then as was mentioned above its $K(0)^*$-motive is not split.
So, we can assume that $s>1$.

By \cite[Theorem~2.10]{OVV07} there exists a field extension $K$ of $F$ such that the anisotropic part of $q_K$ is similar to an anisotropic $s$-fold Pfister form, say $q'$.  Thus, $q_K$ is isomorphic to an orthogonal sum
of $q'$ (up to a scalar multiple) and a hyperbolic form. Let $Q$ and $Q'$ be the projective quadrics over $K$ associated
with $q_K$ and $q'$ resp. By \cite[Proposition~2]{Rost}
the Chow motive of $Q$ is isomorphic to a sum of Tate motives and a Tate twist of the motive of $Q'$.
Therefore, by Vishik--Yagita \cite[Section~2]{ViYa07} the same decomposition holds for the cobordism motives and, hence, for the Morava motives.
But by Proposition~\ref{pro62} the $K(s-1)^*$-motive of $Q'$ and, hence, of $Q$ is not split.
Contradiction.

Conversely, assume that $q$ belongs to $I^m$ and let $Q$ be the respective projective quadric. Let $1\le n<m-1$.
Then we can present $q$ as a finite sum of (up to proportionality)
$m$-fold Pfister forms. We prove our statement using induction on the length of such a presentation in the Witt-ring.               
If the length is zero, i.e., if $q$ is a split form, then the $K(n)^*$-motive of $Q$ is split for all $n$.

Let $\alpha$ be an $m$-fold Pfister form in the decomposition of $q$. Let $X^{\aff}$ be the affine norm quadric of
dimension $2^{n+1}-1$ corresponding to a subsymbol of $\alpha$ from $H_{\et}^{n+2}(F,\zz/2)$ (note that $n+2\le m$).
Then the length of $q$ over $F(X^{\aff})$ is strictly smaller than the length of $q$ over $F$.

Applying Lemma~\ref{lem610} to the varieties $X=Q\times Q$ and $Z=X^{\aff}$  and using Corollary~\ref{moravaaffine} we obtain that the pullback of the natural projection $$K(n)^*(Q\times Q)\to K(n)^*(X^{\aff}\times Q\times Q)$$ is surjective.

But by the localization sequence $$K(n)^*(X^{\aff}\times Q\times Q)\to K(n)^*((Q\times Q)_{F(X^{\aff})})$$
is surjective. By the induction hypothesis on the length of $q$, the restriction homomorphism
$$K(n)^*((Q\times Q)_{F(X^{\aff})})\to K(n)^*((Q\times Q)_{\widetilde F})$$
to a splitting field $\widetilde F$ of $Q_{F(X^{\aff})}$ is surjective. Therefore, the restriction homomorphism $$K(n)^*(Q\times Q)\to K(n)^*((Q\times Q)_{\widetilde F})$$
is surjective. In particular, since the projectors
for the Morava motive of $Q$ lie in $K(n)^*(Q\times Q)$, it follows from Rost nilpotence that the $K(n)^*$-motive of $Q$ over $F$ is split.
\end{proof}

\begin{rem}
The same statement with a similar proof holds for the variety of totally isotropic
subspaces of dimension $k$ for all $1\le k\le 
(\dim q)/2$. Note that in the case of a Pfister 
form, the motive of such a variety is still a 
direct sum of Rost motives.
\end{rem}

The same proof also shows the following proposition.

\begin{prop}\label{moravaodd}
If $q\in I^m$ for some $m\ge 2$ and $q'=q\perp \langle c\rangle$ for some $c\in F^\times$, then the $K(n)^*$-motive of the
respective projective quadric $q'=0$ is split for all $0\le n<m-1$.

Conversely, if $q'$ is an odd-dimensional quadratic form such that the $K(n)^*$-motive of the respective projective quadric $q'=0$ is split for some $n\ge 0$, then in the Witt ring $q'=q\perp\langle c\rangle$ for some $q\in I^{n+2}$ and $c\in F^\times$.
\end{prop}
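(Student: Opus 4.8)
The plan is to transcribe the proof of Proposition~\ref{moravaproj}, the only change being that the quadric now in play is odd-dimensional. For the first assertion I would fix $n$ with $1\le n<m-1$ and argue by induction on the least number $\ell$ of $s$-fold Pfister forms with $s\ge m$ (up to proportionality) needed to write $q$ in the Witt ring, running the induction uniformly over all base fields and all $c$. If $\ell=0$ then $q$ is hyperbolic, so $q'=\langle c\rangle\perp q$ has maximal Witt index among odd-dimensional forms of its dimension, the quadric $Q'$ is split, and its $K(n)^*$-motive is a sum of Tate motives. For $\ell\ge 1$ I would pick an anisotropic summand $\langle d\rangle\alpha$ of $q$ with $\alpha$ an $s$-fold Pfister form, $s\ge m\ge n+2$, choose a subsymbol of $\alpha$ of degree $n+2$, and let $X^{\aff}$ be the associated affine norm quadric (of dimension $2^{n+1}-1$); over $F(X^{\aff})$ the form $\alpha$, hence $\langle d\rangle\alpha$, becomes hyperbolic, so $q_{F(X^{\aff})}$ has a shorter presentation and, by the inductive hypothesis, the $K(n)^*$-motive of $Q'_{F(X^{\aff})}$ is split. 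Then I would run verbatim the localization-sequence diagram chase of Proposition~\ref{moravaproj} with $Q'\times Q'$ in place of $Q\times Q$: Corollary~\ref{moravaaffine} makes the pullback $K(n)^*(F(Q'\times Q'))\to K(n)^*(X^{\aff}_{F(Q'\times Q')})$ an isomorphism, induction on dimension makes the pullback on the direct limits over closed subvarieties of $Q'\times Q'$ surjective, so the pullback $K(n)^*(Q'\times Q')\to K(n)^*(X^{\aff}\times Q'\times Q')$ is surjective; composing with the localization surjection onto $K(n)^*((Q'\times Q')_{F(X^{\aff})})$, and (the motive being already split there) with the restriction to a splitting field $K$, one obtains surjectivity of $K(n)^*(Q'\times Q')\to K(n)^*((Q'\times Q')_K)$, whence, since the projectors cutting out the Morava motive of $Q'$ live in $K(n)^*(Q'\times Q')$, Rost nilpotence for Morava motives of quadrics forces the $K(n)^*$-motive of $Q'$ to split over $F$.

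For the converse I would assume the $K(n)^*$-motive of $Q'$ split and note that the discriminant of $q'$ forces the square class of any $c$ with $q'\perp\langle -c\rangle\in I$; fixing such a $c$, set $q:=q'-\langle c\rangle$, and suppose for contradiction that $q\notin I^{n+2}$, so $q\in I^t\setminus I^{t+1}$ for some $1\le t\le n+1$. By \cite[Theorem~2.10]{OVV07} one may pass to a field $L/F$ over which $q_L$ is, in the Witt ring, an anisotropic $t$-fold Pfister form $\rho$; then $q'_L$ is Witt-equivalent to $\rho\perp\langle c\rangle$, so by \cite[Proposition~2]{Rost} the Chow motive of $Q'_L$ is a sum of Tate motives and a Tate twist of the motive of the quadric of the anisotropic kernel of $\rho\perp\langle c\rangle$. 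If $\rho$ represents $-c$, that kernel is $\langle -c\rangle$ times the pure part $\rho'$ of $\rho$; its quadric coincides with the pure Pfister quadric of $\rho$, so its Chow motive is a sum of Tate motives and a single copy of $R_t$. If $\rho$ does not represent $-c$, the kernel is the norm form of the degree-$(t+1)$ symbol obtained by appending $(-c)$ to the symbol of $\rho$, so the motive is a sum of Tate motives and a single copy of $R_{t+1}$. Since $t-1\le n$, Proposition~\ref{pro62} shows that $R_t$ (and, when $t\le n$, also $R_{t+1}$) is not a sum of Tate motives in the category of $K(n)^*$-motives; as splitting of a Morava motive passes to field extensions, in these cases the $K(n)^*$-motive of $Q'$ over $F$ would be non-split, a contradiction, whence $q\in I^{n+2}$ and $q'=q\perp\langle c\rangle$ as required.

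The hard part will be the extreme values of $t$ in the converse, above all $t=n+1$ with $\rho$ not representing $-c$: there the case analysis above only exhibits $R_{n+2}$ inside the Chow motive of $Q'_L$, and by Proposition~\ref{pro62}(1) the $K(n)^*$-motive of $R_{n+2}$ is already a sum of Tate motives, so no contradiction arises. One must instead exploit the freedom left by \cite[Theorem~2.10]{OVV07} (passing to a further generic extension) to arrange that the anisotropic $(n+1)$-fold Pfister form $\rho$ represents $-c$ while remaining anisotropic; then the relevant summand of the Chow motive of $Q'_L$ becomes $R_{n+1}$, whose $K(n)^*$-motive is non-split at the critical height $n=(n+1)-1$ by Proposition~\ref{pro62}(2) --- precisely the input that makes the even-dimensional argument of Proposition~\ref{moravaproj} run (and, symmetrically, at $t=1$ one must keep $\rho\perp\langle c\rangle$ anisotropic so as to land on an anisotropic conic, i.e.\ on $R_2$, which is non-split for $n\ge 2$). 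By contrast, the Pfister-length induction, the diagram chase with $Q'\times Q'$, and the applicability of Corollary~\ref{moravaaffine} over $F(Q'\times Q')$ are routine transcriptions of the proof of Proposition~\ref{moravaproj}.
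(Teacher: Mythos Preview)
Your approach is correct and matches what the paper intends by ``the same proof''. The forward direction transcribes verbatim from Proposition~\ref{moravaproj}. For the converse you have correctly isolated the one genuinely new difficulty: when $t=n+1$ and the anisotropic Pfister form $\rho$ produced by \cite[Theorem~2.10]{OVV07} does not represent $-c$, the only Rost summand visible in the motive of $Q'_L$ is $R_{n+2}$, which by Proposition~\ref{pro62}(1) is $K(n)^*$-split, so no contradiction arises directly.

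Your proposed fix is right and can be made precise as follows. Pass from $L$ to $L':=L(Y)$, the function field of the quadric $Y$ of $\rho\perp\langle c\rangle$. Over $L'$ the $(n+2)$-fold Pfister form $\rho\otimes\langle\!\langle -c\rangle\!\rangle$ becomes hyperbolic, hence $\rho_{L'}$ represents $-c$; and since $\dim\rho=2^{n+1}\le 2^{n+1}<2^{n+1}+1=\dim(\rho\perp\langle c\rangle)$, Hoffmann's separation theorem guarantees that $\rho_{L'}$ remains anisotropic. You are therefore in your case~(a) over $L'$: the anisotropic kernel of $q'_{L'}$ is a codimension-one Pfister neighbour of $\rho_{L'}$, whose Chow motive is a direct sum of shifted copies of $R_{n+1}$ (not ``a single copy'', but this is harmless). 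Proposition~\ref{pro62}(2) then gives the contradiction, exactly as in the even-dimensional case.

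Two small corrections in your write-up. The condition pinning down the square class of $c$ is $q'\perp\langle -c\rangle\in I^2$, not $\in I$ (every $c$ already gives an even-dimensional form); with this choice one has $q=q'-\langle c\rangle\in I^2$, so $t\ge 2$ and your separate $t=1$ worry does not arise. Also, in case~(a) the motive of the neighbour is a sum of shifted $R_t$'s (no Tate summands when $\rho$ is anisotropic), and in case~(b) the norm quadric carries $R_{t+1}$ as its upper motive --- your phrase ``a single copy'' is imprecise in both places, though it does not affect the logic.
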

\begin{proof}
The proof is almost verbatim as of Proposition~\ref{moravaproj}. 
For the reader's convenience we sketch the proof of the second part of the proposition.

Set $q=q'\perp\langle -\mathrm{disc}(q')\rangle$ in the Witt ring. Then $q\in I^{n+2}$. Indeed, otherwise as in the proof of Proposition~\ref{moravaproj} over some field extension $K$ of $F$ the anisotropic part of the form $q_K$ will be similar to an anisotropic $s$-fold Pfister form with $2\le s<n+2$.
Then the motive of the respective Pfister quadric is a direct sum of Tate twists of Rost motives $R_{s}$. But by Proposition~\ref{pro62} the $K(n)^*$-motive $R_s$ is not split.
Besides, it follows from \cite[Theorem~17]{Rost} that the motive of the quadric $q'=0$ contains over $K$ a Tate twist of $R_{s}$.
Hence, the $K(n)^*$-motive of the projective quadric $q'=0$ is not split over $K$ and, hence, is not split over $F$. 
\end{proof}

\section{$K(n)$-split varieties and $p$-torsion in Chow groups}\label{sec:morava_general}

In this section we obtain several general results concerning Morava $K$-theories.
First, 
with the help of the Landweber--Novikov 
operations we prove that if a projective homogeneous variety
is split with respect to $K(n)^*$, then it is split 
with respect to $K(m)^*$ for $m\colon 1\le m \le n$ 
(Corollary~\ref{zzzzz}). Recall that by results of \cite{ViYa07} if the motive of a smooth projective variety $X$ is split with respect to the Chow theory, then it is split for every oriented theory.

Thus, to prove the non-splitting of the $p$-local Chow motive of a 
projective homogeneous variety one
could consecutively check the splitting of the
Morava motives $M_{K(1)}$, $M_{K(2)}$, etc. If one of these motives is non-split,
then the $p$-local Chow motive is non-split as well. Conversely,
if all Morava motives are split, then the $p$-local Chow motive is split as well. In fact, by Corollary~\ref{cor:morava_to_chow_motive} below
it suffices to consider the $n$-th Morava $K$-theory such that $p^n$ is greater than or equal to the dimension of the variety. In this sense one could interpret $\CH^*\otimes\zz_{(p)}$ as a Morava $K$-theory of an infinite height.

Secondly, we investigate properties of smooth projective geometrically cellular varieties $X$
for which the pullback restriction map $K(n)^*(X)\rarr K(n)^*(\overline X)$
is an isomorphism. 
Using symmetric operations we show in Theorem~\ref{th:no_torsion}
that $\CH^i(X)$ has no $p$-torsion for such varieties
where $i \le \frac{p^n-1}{p-1}$.
Finally, in Theorem~\ref{th:general_gamma_app}
we use the gamma filtration on $K(n)^*$
to prove finiteness
of $p$-torsion in Chow groups of such varieties
in codimensions up to $p^n$.

\begin{ntt}[Landweber--Novikov operations and split $K(n)^*$-motives]\label{sec:LandNov}

As was mentioned in Section~\ref{freetheory} every formal group law $(R,\mathcal{F}_R)$
yields a free theory $\Omega^*\otimes_{\LL}R$.
It is natural to ask about relationships between free theories corresponding to isomorphic formal group laws.
For simplicity, taking
an isomorphism between formal group laws $(R,\mathcal{F}_R)$ and $(R, \mathcal{F}'_R)$
which is the identity on $R$ 
(i.e., it is a change of the ``parameter'' of the formal group law), one obtains an isomorphism
of the presheaves of rings of corresponding free theories. Moreover, a considerable part of such isomorphisms
can be obtained via the specialization 
of the total Landweber--Novikov operations 
on the level of algebraic cobordism. These operations put severe constraints on the structure of
the algebraic cobordism as an $\LL$-module, and we will use
this in the study of the Morava $K$-theories of $K(n)^*$-split varieties.

Recall that there exists a graded Hopf algebroid $(\LL,\LL B)$ which
represents the formal group laws and strict isomorphisms 
between them (see e.g. \cite[Appendix~1.1.1, Appendix~2.1.16]{Rav}),
 where $\LL B = \LL[b_1,b_2,\ldots]$.
The total Landweber--Novikov operation 
$$S^{tot}_{L-N}\colon \Omega^*\rarr \Omega^*\ot_\LL \LL B$$ is a multiplicative operation
which in some sense corresponds to the universal strict isomorphism
of formal group laws, see \cite[Example 3.9]{Vish1}.

\begin{prop}[{\cite[Proposition~2.10]{SechCob}}]
The action of the Landweber--Novikov operations makes $\Omega^*$ into
a functor to the graded comodules over the Hopf algebroid $(\LL,\LL B)$. 
\end{prop}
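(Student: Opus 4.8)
The plan is to verify directly the defining axioms of a comodule over the Hopf algebroid $(\LL,\LL B)$ for the coaction given by the total Landweber--Novikov operation $\psi_X:=S^{tot}_{L-N}\colon\Omega^*(X)\rarr\Omega^*(X)\ot_\LL\LL B$: that $\psi_X$ is well-defined and $\LL$-linear, that it satisfies the counit axiom, that it satisfies coassociativity, and finally that it is natural in $X$, so that $\Omega^*$ becomes a functor to graded comodules. The first and last points are essentially formal; the counit and coassociativity will be deduced from the universal property of algebraic cobordism.

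First I would check well-definedness. Writing $S^{tot}_{L-N}=\sum_\alpha S_\alpha\cdot b^\alpha$ with the Landweber--Novikov operations $S_\alpha\colon\Omega^*\rarr\Omega^{*+|\alpha|}$, one observes that for a fixed $x\in\Omega^*(X)$ only finitely many $S_\alpha(x)$ are nonzero: indeed $\Omega^*(X)$ is generated over $\LL$ by cobordism classes $[Y\rarr X]$ of bounded dimension, $S_\alpha$ lowers dimension by $|\alpha|$, and $\Omega_{n}(X)=0$ for $n<0$, so $S_\alpha$ annihilates a class of dimension $n$ whenever $|\alpha|>n$; and for each fixed degree there are only finitely many multi-indices $\alpha$. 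Hence $\psi_X(x)$ lands in the genuine tensor product $\Omega^*(X)\ot_\LL\LL B$, not a completion. Multiplicativity and compatibility with the $\LL$-module structures are part of the construction of $S^{tot}_{L-N}$ as a reorientation of $\Omega^*$, so nothing new is needed there.

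For the counit and coassociativity I would exploit that $\Omega^*$ is universal. Since $\LL B=\LL[b_1,b_2,\ldots]$ is free over $\LL$, flat base change preserves the Levine--Morel axioms, so $\Omega^*\ot_\LL\LL B$ is again an oriented cohomology theory; reorienting it by the universal strict isomorphism $\gamma_u(t)=t+\sum_{i\ge1}b_it^{i+1}$ yields a theory $B_1^*$ whose formal group law is the $\eta_R$-pushforward of $F_\Omega$, and by \cite[Theorem~1.2.6]{LM} the operation $S^{tot}_{L-N}$ is the \emph{unique} morphism of oriented theories $\Omega^*\rarr B_1^*$. Composing with the augmentation $\LL B\rarr\LL$, $b_i\mapsto0$, corresponds to the trivial strict isomorphism, so the composite is a morphism of oriented theories $\Omega^*\rarr\Omega^*$, hence the identity by uniqueness --- this is the counit axiom. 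For coassociativity I would reorient $\Omega^*\ot_\LL\LL B\ot_\LL\LL B$ by the composite of the two universal strict isomorphisms attached to the two $\LL B$-factors, obtaining a theory $B_2^*$; by the very definition of the Hopf algebroid $(\LL,\LL B)$ this reorientation is classified by the comultiplication $\Delta\colon\LL B\rarr\LL B\ot_\LL\LL B$. Then both $(S^{tot}_{L-N}\ot\id)\circ S^{tot}_{L-N}$ and $(\id\ot\Delta)\circ S^{tot}_{L-N}$ are morphisms of oriented theories $\Omega^*\rarr B_2^*$ realizing that same composite strict isomorphism, so they coincide by uniqueness, which is exactly coassociativity.

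Finally, since $S^{tot}_{L-N}$ is an operation of oriented cohomology theories it is natural: every pullback $f^*$ commutes with $\psi$, i.e.\ $f^*$ is a morphism of $(\LL,\LL B)$-comodules (push-forwards need not be, which is consistent with $\Omega^*$ being a presheaf), so $\Omega^*$ is indeed a contravariant functor to graded comodules. The hard part will be the coassociativity step: one must set up the reoriented theory $B_2^*$ on the double tensor product carefully, match $\Delta$ with composition of strict isomorphisms in the conventions of \cite{Rav}, and verify that both composites genuinely respect push-forwards and first Chern classes so that the uniqueness clause of \cite[Theorem~1.2.6]{LM} applies. A more computational alternative, following \cite[Example~3.9]{Vish1}, would describe $S^{tot}_{L-N}$ through its action on line bundles and on products of projective spaces and then compare power series by hand, but I expect that to be longer and less transparent.
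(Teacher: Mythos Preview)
The paper does not give its own proof of this proposition: it is quoted verbatim with a citation to \cite[Proposition~1.10]{SechCob}, so there is nothing in the present paper to compare your argument against. Your outline is the standard one and is essentially correct --- the counit and coassociativity via the universality clause of \cite[Theorem~1.2.6]{LM} is exactly how one proves this, and your identification of the coassociativity step as the delicate point is accurate.

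One small imprecision is worth fixing. In the well-definedness step you say ``$\Omega^*(X)$ is generated over $\LL$ by cobordism classes $[Y\rarr X]$ of bounded dimension'' and then use that $S_\alpha$ lowers dimension. But the individual operations $S_\alpha$ are \emph{not} $\LL$-linear, so reducing to $\LL$-generators does not immediately work. The clean fix is to observe that $\Omega_*(X)$ is generated as an \emph{abelian group} by classes $[Y\rarr X]$ (this is built into the Levine--Morel construction), each $S_\alpha$ is additive, and on a single class $[Y\rarr X]$ one has $S_\alpha([Y\rarr X])\in\Omega_{\dim Y-|\alpha|}(X)$, which vanishes once $|\alpha|>\dim Y$. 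Alternatively, use multiplicativity of the total operation: $S^{tot}_{L-N}(\lambda\cdot[Y\rarr X])=\eta_R(\lambda)\cdot S^{tot}_{L-N}([Y\rarr X])$ is a product of an element of $\LL B$ with a finite sum, hence finite. Either route closes the gap; your conclusion stands.
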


In particular, $\mathbb{L}=\Omega^*(\Spec F)$ is canonically a 
comodule over $(\LL,\LL B)$, and its subcomodules
are the same as the ideals 
which are invariant with respect to the Landweber--Novikov operations.
The only non-zero prime ideals among them are $I(p,m)=(p,v_1,\ldots,v_{m-1})$ 
and $I(p)=\cup_{m} I(p,m)$,
 where $p$ is a prime number
and $v_i$'s
are $\nu_i$-elements
(\cite[Theorem~2.2]{Land}).

The situation with $BP^*$ is very similar with $\Omega^*$. 
For every smooth variety $X$ the $BP$-module $BP^*(X)$
is a direct summand of $\Omega^*(X)\ot\Zp$ (see e.g. \cite[Proposition~2.4]{SechCob}),
and one can restrict the action of the Landweber--Novikov operations
to $BP^*(X)$ which makes it a graded comodule over 
the Hopf algebroid $(BP, BP_*BP)$ (for the latter see \cite[Appendix~2.1.27]{Rav}).
In particular,
there is an action of the Landweber--Novikov operations on $BP$
and the only non-zero invariant prime ideals 
are of the form $I(k)=(p,v_1,\ldots,v_{k-1})$ (\cite[Theorem~$2.2_{BP}$]{Land}).

The abelian category of comodules over $(\LL,\LL B)$ (or over $(BP, BP_*BP)$)
was extensively studied
by topologists.
Note also that $(\LL,\LL B)$ is canonically isomorphic to $(MU_*, MU_*(MU))$,
and the latter notation is often used in the literature.

\begin{prop}[{\cite[Theorem~3.3]{Land2}}, {\cite[Theorem~2.2, 2.3, $2.2_{BP}, 2.3_{BP}$]{Land}}]
\label{prop:landweber_filtration}

Let $M$ be a graded comodule over $(\LL,\LL B)$ (over $(BP, BP_*BP)$, respectively)
 which is finitely presented as an $\LL$-module (as a $BP$-module, respectively).
Then $M$ has a filtration $$M=M_0\supset M_1\supset \ldots \supset M_d=0$$
such that for every $i$ the module $M_i/M_{i+1}$ is isomorphic to $\LL/I(p_i,n_i)$ 
or $\LL$ ($BP/I(m_i)$ or $BP$, respectively) after a shift of grading,
where $p_i$ is a prime number and $n_i$ is a positive integer
($m_i$ is a positive integer, respectively).
\end{prop}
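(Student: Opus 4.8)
The plan is to run Landweber's classical argument, treating the $(\LL,\LL B)$- and $(BP,BP_*BP)$-cases in parallel: they are formally identical, with the ideals $I(p,m)$ of $\LL$ playing the role of the ideals $I(m)$ of $BP$ and the zero ideal accounting for the summands ``$\LL$'' resp. ``$BP$''. I will phrase everything for $(BP,BP_*BP)$, writing $\psi_M\colon M\rarr M\ot_{BP}BP_*BP$ for the coaction. Two facts about the ideal theory of $BP$ drive the proof. The first is recalled in the excerpt (\cite{Land}): the Landweber--Novikov-invariant ideals of $BP$ are precisely its subcomodules, and the only invariant prime ideals are $(0)$, the $I(m)=(p,v_1,\ldots,v_{m-1})$, and $I(\infty)=\cup_m I(m)$. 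The second, also due to Landweber (\cite{Land}, \cite{Land2}), is that for \emph{any} comodule $M$ every prime of $BP$ associated to the underlying module is automatically invariant, hence one of the above.

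First I would reduce to a Noetherian base. Since $M$ is finitely presented over $BP=\Zp[v_1,v_2,\ldots]$, a finite set of generators and defining relations for $M$, together with the coactions of the generators (each a finite sum of monomials in finitely many of the $t_i\in BP_*BP$ with coefficients in finitely many $v_i$), involves only finitely many variables. Collecting these, enlarging to a finitely generated $\Zp$-subalgebra $A\subseteq BP$ and a finite set of $t_i$'s, and iteratively closing up under the comultiplication, conjugation and the two units of the Hopf algebroid — these mix the $t_i$'s with the $v_i$'s, but each only in bounded degree, so the procedure stabilises after finitely many enlargements — one obtains a \emph{Noetherian} sub-Hopf-algebroid $(A,AB)$, with $A$ a polynomial ring over $\Zp$ in finitely many variables, and a finitely generated $(A,AB)$-comodule $M_A$ with $M\cong M_A\ot_A BP$ compatibly with coactions. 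The invariant primes of $A$ are now $(0)$ and the finite-type ideals $I(m)\cap A$, so $I(\infty)$ can never appear.

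Next I would build the filtration over $A$ and transport it upward. As $A$ is Noetherian and $M_A\ne 0$, the set $\mathrm{Ass}_A(M_A)$ is nonempty, and by Landweber's theorem its members are invariant primes. Choosing $\mathfrak p\in\mathrm{Ass}_A(M_A)$ and $x\in M_A$ with $\mathrm{Ann}_A(x)=\mathfrak p$, the cyclic submodule $Ax\cong A/\mathfrak p$ is a subcomodule (invariance of $\mathfrak p$), and $M_A/Ax$ is again a finitely generated $(A,AB)$-comodule, so one iterates. This terminates after finitely many steps by the standard argument for finitely generated modules over a Noetherian ring, producing a finite filtration of $M_A$ by subcomodules whose subquotients are shifts of $A$ or of $A/(I(m_i)\cap A)$ with $m_i$ finite. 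Applying the exact functor $-\ot_A BP$ (flatness of $BP$ over $A$) yields the claimed filtration of $M$, since $(A/\mathfrak p)\ot_A BP\cong BP/\mathfrak pBP$ is $BP$ or $BP/I(m_i)$; and the $(\LL,\LL B)$-case is handled by the same argument.

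The main obstacle is not in the bookkeeping above but in the two cited Landweber facts, above all the assertion that associated primes of comodules are invariant: this is where genuine complex-bordism input — the action of the Landweber--Novikov operations on $BP$ and the formal group law — is concentrated, and I would simply invoke it. On the elementary side the only delicate point is the Noetherian reduction: one must check that the subalgebra $A$ and the finite set of $t_i$'s can be chosen \emph{simultaneously} so that $(A,AB)$ is honestly closed under all the structure maps and $M_A$ is an honest $(A,AB)$-comodule, this being the single place where the non-Noetherianness of $BP$ and $\LL$ has to be confronted directly. Everything else — the existence and finiteness of the filtration and the final flat base change — is then formal.
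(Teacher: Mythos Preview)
The paper does not supply its own proof of this proposition: it is stated as a citation of Landweber's results \cite{Land2}, \cite{Land} and is used as a black box in the subsequent corollaries. There is therefore no argument in the paper to compare against.

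Your outline is a correct reconstruction of Landweber's method. The two inputs you isolate --- that the invariant prime ideals of $BP$ are exactly $(0)$, the $I(m)$, and $I(\infty)$, and that associated primes of the underlying module of a comodule are automatically invariant --- are precisely the content of \cite[Theorems~2.2, 2.3]{Land}, and once these are granted the filtration argument you sketch is standard. One small comment: your Noetherian reduction via an auxiliary finitely generated sub-Hopf-algebroid is workable but somewhat heavier than what Landweber actually does. In \cite{Land2} the termination of the filtration is handled more directly by exploiting the grading: a finitely presented graded $BP$-module is degreewise a finitely generated $\Zp$-module, and one controls the length of the filtration by an induction on the number of generators together with a bound on the degrees in which torsion can appear. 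Either route works, and your explicit acknowledgement that the real content lies in the cited Landweber facts (which you invoke rather than reprove) is exactly the right assessment.
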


\begin{cor}\label{cor:support_coh_mfg}
Fix a prime $p$ and for $s\ge 1$ denote by $K(s)\cong \ZZ_{(p)}[v_s,v_s^{-1}]$ the {$\LL$-algebra}
corresponding to a choice of a formal group law for a Morava $K$-theory $K(s)^*$.
If for $M$ as in Proposition~\ref{prop:landweber_filtration} we have $M\ot_\LL K(n)=0$ for some $n\ge 1$, 
then $M\ot_\LL K(m)=0$ for all $m\colon 1\le m<n$.
\end{cor}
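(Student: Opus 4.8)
The plan is to extract the behaviour of $M$ under $-\ot_\LL K(s)$ from the Landweber filtration supplied by Proposition~\ref{prop:landweber_filtration}. Fix such a filtration $M=M_0\supset M_1\supset\cdots\supset M_d=0$ with subquotients $Q_i:=M_i/M_{i+1}$, each of which is, up to a shift of grading, one of $\LL$, $\LL/I(q_i,k_i)$ (or, in the $BP$-version, $BP$, $BP/I(k_i)$). The point is that vanishing of $M\ot_\LL K(s)$ is governed entirely by these blocks, and that a block which vanishes under $K(n)$ also vanishes under $K(m)$ for $m<n$.

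First I would record the behaviour of a single block. Since $K(s)^*$ carries a formal group law of height $s$ modulo $p$, the classifying map $\LL\to K(s)=\zz_{(p)}[v_s,v_s^{-1}]$ (equivalently $BP\to K(s)$) sends $v_j$ to $0$ whenever $s\nmid j$ --- in particular for $1\le j<s$, as one sees for degree reasons --- and sends $v_s$ to an invertible element; see the construction recalled in Section~\ref{morava}. By \cite{Land} the invariant prime $I(q,k)$ is generated by the regular sequence $q,v_1,\dots,v_{k-1}$ (and $I(k)\subset BP$ by $p,v_1,\dots,v_{k-1}$), so the Koszul complex on this sequence is a free resolution of the block, and $\Tor^\LL_\bullet(\LL/I(q,k),K(s))$ is the Koszul homology of the images of $q,v_1,\dots,v_{k-1}$ in $K(s)$. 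There are three cases. If $q\neq p$, the image of $q$ is invertible, the complex is acyclic, and $\LL/I(q,k)\ot_\LL K(s)=0$ with vanishing higher $\Tor$, for every $s$. If $q=p$ and $k>s$, then $v_s$ occurs among $p,v_1,\dots,v_{k-1}$ and maps to a unit, so again the complex is acyclic and $\LL/I(p,k)\ot_\LL K(s)=0$ with vanishing higher $\Tor$. If $q=p$ and $k\le s$, the images are $p,0,\dots,0$ and $\LL/I(p,k)\ot_\LL K(s)=K(s)/p=\ff_p[v_s^{\pm 1}]\neq 0$. Finally $\LL\ot_\LL K(s)=K(s)\neq 0$, with vanishing higher $\Tor$ by freeness. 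Two conclusions: for every block $Q$ and every $s\ge 1$, either $Q\ot_\LL K(s)\neq 0$ or $\Tor^\LL_{>0}(Q,K(s))=0$; and if $Q\ot_\LL K(n)=0$, then $Q\ot_\LL K(m)=0$ for all $m\le n$, because the only surviving possibilities ($q\neq p$, or $q=p$ with $k>n$) satisfy $k>m$ as well.

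Granting the block analysis, I would run the argument in three formal steps. \emph{Step 1:} $M\ot_\LL K(n)=0$ implies $Q_i\ot_\LL K(n)=0$ for all $i$; one inducts on $d$, applying $-\ot_\LL K(n)$ to $0\to M_1\to M\to Q_0\to 0$, where right exactness gives $Q_0\ot_\LL K(n)=0$, the dichotomy above then gives $\Tor^\LL_1(Q_0,K(n))=0$, and the exact sequence $\Tor^\LL_1(Q_0,K(n))\to M_1\ot_\LL K(n)\to M\ot_\LL K(n)$ forces $M_1\ot_\LL K(n)=0$, so the inductive hypothesis applies to $M_1$. \emph{Step 2:} for $1\le m<n$ each $Q_i\ot_\LL K(m)=0$, by the second conclusion of the block analysis. \emph{Step 3:} applying $-\ot_\LL K(m)$ to $0\to M_{i+1}\to M_i\to Q_i\to 0$ shows, by right exactness, that $M_i\ot_\LL K(m)$ is a quotient of $M_{i+1}\ot_\LL K(m)$, so a downward induction starting from $M_d=0$ gives $M_i\ot_\LL K(m)=0$ for all $i$, in particular $M\ot_\LL K(m)=0$.

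The only genuine obstacle is Step 1: one cannot simply tensor the filtration and keep exactness, since $K(s)$ is not flat over $\LL$ (nor over $BP$), and it is precisely the Koszul computation of $\Tor$ for the Landweber blocks that compensates for this. Everything else is bookkeeping, the $\LL$- and $BP$-versions being parallel (the blocks $\LL/I(q,k)$ with $q\neq p$ are in any case annihilated by $-\ot_\LL K(s)$ for all $s$, so only the blocks $\LL/I(p,k)$, equivalently $BP/I(k)$ after $p$-typicalisation, matter). Conceptually this is the assertion that a finitely presented comodule corresponds to a coherent sheaf on the $p$-local moduli stack $\mathcal{M}_{fg}$ of formal groups whose support is closed, hence Landweber-invariant, hence a union of the strata $\mathcal{M}_{fg}^{\ge k}$; vanishing at the height-$n$ point forces the support into $\mathcal{M}_{fg}^{\ge n+1}$, which contains no point of height $\le n$, whence the vanishing at all $K(m)$ with $m\le n$.
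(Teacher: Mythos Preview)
Your proof is correct and follows essentially the same route as the paper: both arguments pass through the Landweber filtration, show that each block $Q_i$ has $Q_i\ot_\LL K(n)=0$ (using right exactness plus $\Tor_1$-vanishing to descend through the filtration), deduce the same for $K(m)$ with $m<n$, and reassemble. The only cosmetic difference is in how the $\Tor$-vanishing for a single block is established: you compute it via the Koszul resolution on the regular sequence $q,v_1,\dots,v_{k-1}$, whereas the paper observes more slickly that $\Tor_i^{\LL}(\LL/I(q,k),K(m))$ is simultaneously a module over $K(m)$ and over $\LL/I(q,k)$, and one of $q$ or $v_m$ is invertible in the former and zero in the latter.
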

\begin{proof}
We call by a filtration of an $(\LL,\LL B)$-comodule $M$ just any filtration
from Proposition~\ref{prop:landweber_filtration}. 
We will prove  a stronger statement by induction on the minimal length $d$ of a filtration of $M$.
Namely, if $M\ot_\LL K(n)=0$, then $\Tor_i^{\LL}(M, K(m))=0$ for all $i\ge 0$ and $m\le n$,
and the graded factors of the filtration on $M$ can be only of the form $\LL/I(q,k)$
with $q\neq p$ or with $q=p$ and $n\le k-1$.

For the base of induction $d=1$ we just need to check  the statement for modules $\LL/I(q,k)$.
In both cases (if $q\neq p$ or if $q=p$ and $n\le k-1$) 
$\Tor_i^{\LL}(\LL/I(q,k), K(m))=0$ because it is naturally
both a $K(m)$-module and an $\LL/I(q,k)$-module (compatible with the structure of an $\LL$-module).
If $q\ne p$, then $q$ is invertible in $K(m)=\zz_{(p)}[v_m,v_m^{-1}]$.
If $q=p$ and $m\le k-1$, then $v_m$ is invertible in $K(m)$ and is zero in $\LL/I(q,k)$. 
Therefore, $\Tor_i^{\LL}(\LL/I(q,k), K(m))=0$ for all $i\ge 0$. 
Clearly, if $q=p$ and $n>k-1$, then $\LL/I(q,k) \ot_\LL K(n) \neq 0$.

For the induction step suppose that $M$ has a filtration of length $d+1$,
which means that there exists a short exact sequence of $(\LL,\LL B)$-comodules:
$$0\rarr N\rarr M \rarr \LL/I(q,k)\rarr 0,$$ where $N$ has a filtration of length $d$.
Tensoring this sequence with $K(n)$, we see from the above that either $q\neq p$ or $q=p$ and $n\le k-1$.
Tensoring with $K(m)$, ${1\le m \le n}$, we obtain that $N\ot K(n)=0$
and $\Tor_i^{\LL}(N, K(m))\simeq\Tor_i^{\LL}(M, K(m))$ for all $i\ge 0$ and ${1\le m\le n}$.
We then apply the induction hypothesis to $N$ to conclude that $\Tor_i^{\LL}(M, K(m))=0$ for all $i\ge 0$ and all $m$, $1\le m\le n$.
\end{proof}

\begin{rem}
Let $\CH_{(p)}$ denote the coefficient ring of $\CH^*\otimes\zz_{(p)}$
and let $M$ be as in Corollary~\ref{cor:support_coh_mfg}.
Analogously one can show that if $M\otimes_{\laz}\CH_{(p)}=0$, then $M\otimes_{\laz}K(m)=0$ for all $m\ge 1$.
\end{rem}

\begin{rem}
The language of stacks might provide a more geometric view on the statement above.
Indeed, the category of comodules over the Hopf algebroid $(\LL,\LL B)$ can be identified
with the category of quasi-coherent sheaves over the stack of formal groups 
$\mathcal{M}_{fg}$ (see e.g. \cite{Naum}).
Working modulo $p$ this stack has an exhaustive descending filtration by closed substacks
where the $n$-th piece of it $\mathcal{M}_{fg}^{\ge n}$ 
classifies formal groups of height bigger than or equal to $n$.
Moreover, these substacks are the only irreducible closed (reduced) substacks,
and $\mathcal{M}_{fg}^{\ge n+1}$ is in some sense a divisor in $\mathcal{M}_{fg}^{\ge n}$ 
whose complement has a unique geometric point which corresponds 
to the $n$-th Morava $K$-theory.

The support of a coherent sheaf $\mathcal{G}$ over $\mathcal{M}_{fg}$ 
is closed, and therefore the reduced support is the closed substack  
$\mathcal{M}_{fg}^{\ge m}$ for some $m$. 
In particular, the fibre of $\mathcal{G}$ over the points 
corresponding to the $n$-th Morava $K$-theory is zero if $n<m$
and non-zero if $m\ge n$. This gives a vague explanation of Corollary~\ref{cor:support_coh_mfg}.
\end{rem}

\begin{cor}\label{cor:Kn-trivial-ideals-of-BP}
Let $C$ be a finitely presented $BP$-module
endowed with the structure of a $(BP, BP_*BP)$-comodule.

If $C\ot_{BP} K(n)=0$, then $C\ot_{BP} BP[v_n^{-1}]=0$.
\end{cor}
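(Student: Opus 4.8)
The plan is to mimic the proof of Corollary~\ref{cor:support_coh_mfg}, now using the $BP$-version of the Landweber filtration theorem (Proposition~\ref{prop:landweber_filtration}) and treating $BP[v_n^{-1}]$ in place of the algebras $K(m)$, $m<n$. First I would record the behaviour of the two possible graded pieces of a Landweber filtration. Since the map $BP\to K(n)$ sends $v_i$ with $i<n$ to $0$ and $v_n$ to a unit, one has $BP\ot_{BP}K(n)=K(n)\neq 0$, while $BP/I(m)\ot_{BP}K(n)$ equals $\F{p}[v_n,v_n^{-1}]\neq 0$ when $m\le n$ (the image of $I(m)$ is then $(p)$) and vanishes when $m\ge n+1$ (then $v_n\in I(m)$ is inverted). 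Moreover, for $m\ge n+1$ the ideal $I(m)=(p,v_1,\ldots,v_{m-1})$ is generated by a regular sequence in $BP=\Zp[v_1,v_2,\ldots]$ one of whose members, $v_n$, becomes invertible in $K(n)$; hence the Koszul complex computing $\Tor^{BP}_*(BP/I(m),K(n))$ is acyclic and $\Tor^{BP}_i(BP/I(m),K(n))=0$ for all $i\ge 0$.

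The core is then an induction on the minimal length $d$ of a Landweber filtration by subcomodules $C=C_0\supset C_1\supset\cdots\supset C_d=0$, proving the sharper statement: if $C\ot_{BP}K(n)=0$, then $C$ admits a Landweber filtration all of whose graded pieces are isomorphic, up to a shift, to $BP/I(m_j)$ with $m_j\ge n+1$. The cases $d\le 1$ are immediate from the computations above: a free piece $BP$ or a piece $BP/I(m)$ with $m\le n$ would force $C\ot_{BP}K(n)\neq 0$. For the induction step I would take the short exact sequence of comodules $0\to C_1\to C\to C/C_1\to 0$, in which $C/C_1$ is (up to shift) either $BP$ or some $BP/I(m)$ and $C_1$ is a finitely presented comodule with a shorter Landweber filtration. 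Right exactness of $-\ot_{BP}K(n)$ gives $(C/C_1)\ot_{BP}K(n)=0$, so by the case $d\le1$ we get $C/C_1\cong BP/I(m)$ with $m\ge n+1$, whence $\Tor^{BP}_1(C/C_1,K(n))=0$. The $\Tor(-,K(n))$ long exact sequence of $0\to C_1\to C\to C/C_1\to 0$ then yields $C_1\ot_{BP}K(n)=0$, the induction hypothesis applies to $C_1$, and prepending the top piece completes the step.

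It remains to draw the conclusion. Given a Landweber filtration of $C$ of length $d$ with all graded pieces of the form $BP/I(m_j)$, $m_j\ge n+1$, we have $v_n\in I(m_j)$ for every $j$ (since $n\le m_j-1$), so $v_n$ annihilates each graded piece and therefore $v_n^{\,d}C=0$. Consequently $C\ot_{BP}BP[v_n^{-1}]=0$, as claimed.

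The only points requiring care are the ones already visible in the argument: because $K(n)$ is not flat over $BP$, one cannot simply base-change the Landweber filtration along $BP\to K(n)$, and the induction must instead be fed by the vanishing of $\Tor^{BP}_*(BP/I(m),K(n))$ for $m>n$ — which in turn rests on the fact that $I(m)$ is generated by a regular sequence in $BP$ containing $v_n$. One should also keep the filtrations produced inside the category of finitely presented $(BP,BP_*BP)$-comodules; this is guaranteed by Proposition~\ref{prop:landweber_filtration}, since a subcomodule occurring in such a filtration is an iterated extension of the finitely presented comodules $BP$ and $BP/I(m)$.
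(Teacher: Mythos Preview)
Your proposal is correct and follows essentially the same route as the paper's own proof: the paper simply remarks that the argument of Corollary~\ref{cor:support_coh_mfg} (induction on the length of a Landweber filtration, using the vanishing of $\Tor^{BP}_*(BP/I(m),K(n))$ for $m\ge n+1$) shows that every graded factor $BP/I(k_i)$ of a Landweber filtration of $C$ has $v_n\in I(k_i)$, whence $C\otimes_{BP}BP[v_n^{-1}]=0$. Your write-up makes the Tor vanishing and the final ``$v_n^{\,d}C=0$'' step explicit, but the strategy is the same.
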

\begin{proof}
By Proposition~\ref{prop:landweber_filtration}
the $BP$-module $C$ has a filtration with the graded factors $BP/I(k_i)$.
The same proof as of Corollary~\ref{cor:support_coh_mfg} 
in which one replaces $\LL$ with $BP$ and $I(p,k)$ with $I(k)$
shows that if $C \otimes_{BP} K(n)=0$,
then for the graded factors of the filtration above 
 for all $i$ we have $n\le k_i-1$, i.e. $v_n\in I(k_i)$. 
The claim follows.
\end{proof}

The following lemma is straight-forward.

\begin{lem}\label{lm:RNP_surj}
Let $X$ be a geometrically cellular smooth projective variety over a field $F$ 
and let $A^*$ be a free oriented cohomology theory.
Assume that the $A^*$-motive $M_A(X)$ satisfies the Rost nilpotence property. 
Denote $\overline X=X\times_{F}\overline F$.
Then the following statements are equivalent:
\begin{enumerate}
\item $M_A(X)$ is split;
\item the restriction map $A^*(X\times X)\rarr A^*(\overline{X}\times_{\overline F} \overline{X})$ is an isomorphism;
\item the restriction map $A^*(X\times X)\rarr A^*(\overline{X}\times_{\overline F} \overline{X})$ is a surjection;
\item the restriction map $A^*(X)\rarr A^*(\overline{X})$ is an isomorphism;
\item the restriction map $A^*(X)\rarr A^*(\overline{X})$ is a surjection.
\end{enumerate}
\end{lem}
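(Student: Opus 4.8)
The plan is to prove the cycle of implications $(1)\Rightarrow(2)\Rightarrow(3)\Rightarrow(5)\Rightarrow(4)\Rightarrow(1)$, together with the elementary observations $(2)\Rightarrow(3)$ and $(4)\Rightarrow(5)$ which are trivial (an isomorphism is a surjection), and $(3)\Rightarrow(5)$, $(5)\Rightarrow(4)$. Since $X$ is geometrically cellular, $\overline X$ is a cellular variety, hence $A^*(\overline X)$ is a finitely generated free $A$-module on the classes of the cells, and likewise $A^*(\overline X\times_{\overline F}\overline X)\cong A^*(\overline X)\otimes_A A^*(\overline X)$ by the projective bundle / cellular structure formula (this uses only that $A^*$ is a generalized oriented cohomology theory and the cellular filtration, cf.\ \cite[Section~4.5]{LM} / \cite{ViYa07}). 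I would record this at the outset, as it is used in every implication.

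First I would treat $(1)\Leftrightarrow(4)$ and $(1)\Leftrightarrow(2)$, which are the heart of the matter. If $M_A(X)$ is split, i.e.\ $M_A(X)\simeq\bigoplus_i \ta(n_i)$ over $F$, then applying $A^*$ and comparing with the same decomposition over $\overline F$ shows $A^*(X)\to A^*(\overline X)$ is an isomorphism, giving $(1)\Rightarrow(4)$; the same argument applied to $X\times X$ (whose $\overline F$-motive is $\overline{M}_A(X)\otimes\overline{M}_A(X)$, a sum of Tate motives) gives $(1)\Rightarrow(2)$. For the converse $(4)\Rightarrow(1)$ (equivalently $(2)\Rightarrow(1)$): over $\overline F$ the motive $\overline{M}_A(X)$ splits into Tate motives, so there are projectors $\overline p_i\in A^*(\overline X\times\overline X)$ realizing this decomposition; by the surjectivity in $(3)$ (which follows from $(2)$) these lift to classes $p_i\in A^*(X\times X)$, and one adjusts them to genuine idempotents using the Rost nilpotence hypothesis exactly as recalled in Section~\ref{rostnil}: the difference $p_i^2-p_i$ and the "off-diagonal" products $p_ip_j$ restrict to zero over $\overline F$, hence are nilpotent by Rost nilpotence, and a standard lifting-of-idempotents argument in the (finite-rank, hence appropriately complete) endomorphism ring produces orthogonal idempotents $\tilde p_i$ over $F$ summing to the identity and restricting to $\overline p_i$. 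This yields a splitting $M_A(X)\simeq\bigoplus_i\ta(n_i)$ over $F$, i.e.\ $(1)$.

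It then remains to close the loop cheaply. The implications $(2)\Rightarrow(3)$ and $(4)\Rightarrow(5)$ are formal. For $(3)\Rightarrow(5)$: the composite $X\hookrightarrow X\times X\to X$ (diagonal followed by a projection) is the identity, so $A^*(X)$ is a direct summand of $A^*(X\times X)$ compatibly with base change to $\overline F$; hence surjectivity of $A^*(X\times X)\to A^*(\overline X\times\overline X)$ forces surjectivity of $A^*(X)\to A^*(\overline X)$. For $(5)\Rightarrow(4)$: $A^*(\overline X)$ is free of finite rank $r$ over $A$, and $A^*(X)$ is generated as an $A$-module by at most $r$ elements (the classes of the cells are not literally in $A^*(X)$, but one can argue via the topological filtration, or simply: a surjection $A^{\oplus r}\twoheadrightarrow A^*(X)$ postcomposed with the surjection in $(5)$ gives a surjection $A^{\oplus r}\twoheadrightarrow A^*(\overline X)\cong A^{\oplus r}$ of free modules of the same finite rank, which is therefore an isomorphism, whence the map $A^*(X)\to A^*(\overline X)$ is injective as well). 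Finally $(5)\Rightarrow(1)$ can be obtained by feeding $(5)$ into the $(3)\Rightarrow(5)$-type argument in reverse together with Rost nilpotence, but it is cleaner to route $(5)\Rightarrow(4)\Rightarrow(1)$ as above.

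The main obstacle is the idempotent-lifting step in $(2)\Rightarrow(1)$: one must be careful that the endomorphism ring $\End(M_A(\overline X))$, while not a priori complete, is a finitely generated $A$-module on which the relevant nilpotent correspondences are \emph{simultaneously} nilpotent of bounded index, so that the classical "lifting idempotents along a nilpotent ideal" lemma applies to the $F$-form; this is precisely the content packaged in \cite[Section~8]{CGM05} and \cite[Section~2]{PSZ08} recalled in Section~\ref{rostnil}, and I would invoke it rather than reprove it. Everything else is bookkeeping with the cellular structure of $\overline X$ and the functoriality of $A^*$ under base change.
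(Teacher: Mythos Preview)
Your scheme is mostly sound and the paper gives no proof to compare against (it labels the lemma ``straight-forward''), but the cycle does not close as written. The step $(5)\Rightarrow(4)$ is not justified: you assert that $A^*(X)$ is generated as an $A$-module by at most $r$ elements, where $r$ is the rank of $A^*(\overline X)$, but for a merely \emph{geometrically} cellular $X$ there is no a priori reason for this---the cells live over $\overline F$, not over $F$, and the topological-filtration fix you gesture at would need each $\CH^i(X)$ to be generated by at most as many elements as $\CH^i(\overline X)$, which is not available in general. Your rank-counting therefore does not go through. Relatedly, what you label $(4)\Rightarrow(1)$ is in fact a proof of $(3)\Rightarrow(1)$: to lift the projectors you use the surjectivity of $A^*(X\times X)\to A^*(\overline X\times_{\overline F}\overline X)$, which is $(3)$, and you have not shown $(4)\Rightarrow(3)$. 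So neither $(4)$ nor $(5)$ is linked back to $\{(1),(2),(3)\}$.

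The fix is precisely the implication you mention and then set aside: prove $(5)\Rightarrow(3)$ directly. Since $\overline X$ is cellular, the K\"unneth isomorphism shows that $A^*(\overline X\times_{\overline F}\overline X)$ is generated as an $A$-module by external products $\mathrm{pr}_1^*\alpha\cdot\mathrm{pr}_2^*\beta$ with $\alpha,\beta\in A^*(\overline X)$. Under $(5)$ each $\alpha,\beta$ lifts to some $\tilde\alpha,\tilde\beta\in A^*(X)$, and then $\mathrm{pr}_1^*\tilde\alpha\cdot\mathrm{pr}_2^*\tilde\beta\in A^*(X\times X)$ restricts to $\mathrm{pr}_1^*\alpha\cdot\mathrm{pr}_2^*\beta$; this gives $(3)$. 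Now your Rost-nilpotence argument yields $(3)\Rightarrow(1)$, and you already have $(1)\Rightarrow(4)$, so the equivalence is complete without any direct $(5)\Rightarrow(4)$.
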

\begin{proof}
To prove the implication $(5)\Rightarrow(3)$ note that  $\overline X$ is cellular,
its motive is split and all elements in $A^*(\overline X)$ 
and, therefore, in $A^*(\overline{X}\times_{\overline F} \overline{X})$ are rational.
The implication $(3)\Rightarrow (1)$ follows from Rost nilpotence.
\end{proof}

\begin{cor}
Assume that two free theories $A^*$ and  $B^*$ are isomorphic as presheaves of sets, and
the motives $M_A(X)$ and $M_B(X)$ satisfy the Rost nilpotence property.
Then $M_A(X)$ is split iff $M_B(X)$ is split.
\end{cor}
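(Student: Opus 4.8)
The plan is to reduce the statement to the surjectivity criterion of Lemma~\ref{lm:RNP_surj}, which expresses splitness of $M_A(X)$ (for $X$ as in that lemma, i.e.\ geometrically cellular smooth projective over $F$) purely in terms of the \emph{set}-theoretic surjectivity of the restriction map $A^*(X)\to A^*(\overline X)$, once Rost nilpotence is known. Since surjectivity of a map of sets is preserved by bijections, and an isomorphism of presheaves of sets intertwines all pullback maps, the criterion for $A^*$ will automatically match the one for $B^*$.

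Concretely, I would fix an isomorphism $\phi\colon A^*\xrightarrow{\ \sim\ } B^*$ of presheaves of sets (i.e.\ a natural isomorphism of set-valued functors, compatible with pullbacks) and recall that $A^*(\overline X)$ denotes the colimit $\varinjlim_E A^*(X_E)$ taken over the finite subextensions $E/F$ of $\overline F/F$, with transition maps the pullbacks; in characteristic $0$ each $X_E$ is again smooth over $F$, so this makes sense and the restriction map $A^*(X)\to A^*(\overline X)$ is the canonical map into this colimit. Naturality of $\phi$ makes the family $\{\phi_{X_E}\}_E$ compatible with the transition maps, hence it induces a bijection $A^*(\overline X)\xrightarrow{\sim}B^*(\overline X)$ fitting into a commutative square
\[
\xymatrix{
A^*(X) \ar[r]\ar[d]_{\wr}^{\phi_X} & A^*(\overline X)\ar[d]^{\wr}\\
B^*(X)\ar[r] & B^*(\overline X).
}
\]
In particular the top arrow is surjective if and only if the bottom arrow is.

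Finally I would apply Lemma~\ref{lm:RNP_surj}: since $M_A(X)$ satisfies Rost nilpotence, its being split is equivalent to surjectivity of $A^*(X)\to A^*(\overline X)$ (equivalence of (1) and (5) there), and likewise $M_B(X)$ is split iff $B^*(X)\to B^*(\overline X)$ is surjective. Combining this with the commutative square gives the claim. The only point that needs a little care — and hence the main (if minor) obstacle — is the identification of $A^*(\overline X)$ as a colimit over a diagram of $F$-varieties on which $\phi$ is defined and with which it is compatible; but this is exactly guaranteed by the hypothesis that $\phi$ is a morphism of presheaves, functorial with respect to pullbacks. No use is made of the group or ring structure of the theories, only of the set-level isomorphism.
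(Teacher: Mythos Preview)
Your proof is correct and follows essentially the same approach as the paper: both reduce via Lemma~\ref{lm:RNP_surj} to the surjectivity of the restriction $A^*(X)\to A^*(\overline X)$, a purely set-theoretic property that is transported by the natural isomorphism $\phi$. Your colimit discussion is a reasonable way to make precise that $\phi$ is compatible with base change to $\overline F$; the paper leaves this point implicit, simply asserting that the isomorphism commutes with change of base field.
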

\begin{proof}
Indeed, an isomorphism between $A^*$ and $B^*$ commutes with the change of the base field. Thus, whenever one of the maps $A^*(X)\rarr A^*(\overline{X})$, $B^*(X)\rarr B^*(\overline{X})$
is surjective, so is the other one.
\end{proof}

In particular, it follows from the above corollary 
and \cite[Corollary~4.5]{GV18}
that for a fixed prime $p$, a fixed integer $n$
and a projective homogeneous variety $X$
there is a well-defined property
for $M_{K(n)}(X)$ to be split which does not depend on the choice of an 
$n$-th Morava $K$-theory.

\begin{prop}\label{prop:height_tate_motive}
Let $1\le m\le n$, and
let $X$ be a smooth projective geometrically cellular variety such that $M_{K(m)}(X)$ 
satisfies the Rost nilpotency property.

If $M_{K(n)}(X)$ is split, then $M_{K(m)}(X)$ is split.
\end{prop}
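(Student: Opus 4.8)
The plan is to translate ``split'' into ``the restriction map is surjective'', push the problem down to the Lazard ring via algebraic cobordism, and then apply the Landweber-filtration input of Corollary~\ref{cor:support_coh_mfg}. Concretely, by Lemma~\ref{lm:RNP_surj} the hypothesis that $M_{K(n)}(X)$ is split means that the restriction $K(n)^*(X)\to K(n)^*(\overline X)$ is surjective, and, using Rost nilpotence for $M_{K(m)}(X)$ together with Lemma~\ref{lm:RNP_surj} again, it is enough to show that $K(m)^*(X)\to K(m)^*(\overline X)$ is surjective. Since $K(l)^*(Y)=\Omega^*(Y)\otimes_\LL K(l)$ for every variety $Y$ and every $l$, and $\otimes$ is right exact, the cokernel of $K(l)^*(X)\to K(l)^*(\overline X)$ is $C\otimes_\LL K(l)$, where $C:=\coker\big(\Omega^*(X)\xrightarrow{\res}\Omega^*(\overline X)\big)$. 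By functoriality of the Landweber--Novikov coaction (\cite[Proposition~1.10]{SechCob}) and its compatibility with the base change $F\hookrightarrow\overline F$, the map $\res$ is a morphism of $(\LL,\LL B)$-comodules, so $C$ is an $(\LL,\LL B)$-comodule; and since $\overline X$ is cellular, $\Omega^*(\overline X)$ is free of finite rank over $\LL$, so $C$ is finitely generated over $\LL$. Everything thus comes down to the implication: if $C\otimes_\LL K(n)=0$ then $C\otimes_\LL K(m)=0$ for $m\le n$, which for finitely presented comodules is exactly Corollary~\ref{cor:support_coh_mfg}.

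The main obstacle is that $\LL$ is not Noetherian, so $C$ need not be finitely \emph{presented} (equivalently, $\Omega^*(X)$, hence the submodule $N:=\mathrm{im}(\res)=\ker(\Omega^*(\overline X)\twoheadrightarrow C)$, need not be finitely generated over $\LL$), and Corollary~\ref{cor:support_coh_mfg} cannot be invoked for $C$ directly. To get around this I would apply the corollary not to $C$ itself but to a finitely presented comodule surjecting onto it. The vanishing $C\otimes_\LL K(n)=0$ says that $N\otimes_\LL K(n)$ surjects onto the free module $K(n)^*(\overline X)$, so one can choose finitely many homogeneous elements $n_1,\ldots,n_r\in N$ whose images form a $K(n)$-basis of $K(n)^*(\overline X)$ (possibly after multiplying by powers of the invertible class $v_n$ to pass from $\Omega^*$ to $K(n)^*$). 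Let $N''\subseteq N$ be the $(\LL,\LL B)$-subcomodule generated by $n_1,\ldots,n_r$; since $\LL B=\LL[b_1,b_2,\ldots]$ is free (hence flat) over $\LL$, a subcomodule generated by finitely many elements is finitely generated as an $\LL$-module (the coaction of each $n_i$ is a finite sum $\sum_j m_{ij}\otimes\gamma_{ij}$, and the generated subcomodule lies in $\sum_{i,j}\LL\,m_{ij}$).

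Then $C'':=\Omega^*(\overline X)/N''$ is a finitely presented $(\LL,\LL B)$-comodule, it satisfies $C''\otimes_\LL K(n)=0$ by the choice of the $n_i$, and there is a surjection $C''\twoheadrightarrow C$ coming from $N''\subseteq N$. Corollary~\ref{cor:support_coh_mfg} applied to $C''$ gives $C''\otimes_\LL K(m)=0$ for all $1\le m\le n$, hence $C\otimes_\LL K(m)=0$, so $K(m)^*(X)\to K(m)^*(\overline X)$ is surjective and $M_{K(m)}(X)$ is split by Lemma~\ref{lm:RNP_surj}. I expect the only genuinely delicate point to be the finite-generation bookkeeping in the second paragraph; the rest is formal. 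One can also run the whole argument with the Hopf algebroid $(BP,BP_*BP)$ and $BP^*$ in place of $(\LL,\LL B)$ and $\Omega^*$, which matches the setting used for quadrics and twisted flag varieties.
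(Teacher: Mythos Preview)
Your proof is correct and follows the same route as the paper: reduce via Lemma~\ref{lm:RNP_surj} to surjectivity of the restriction map, pass to the cokernel $C$ of $\Omega^*(X)\to\Omega^*(\overline X)$ as an $(\LL,\LL B)$-comodule, and apply Corollary~\ref{cor:support_coh_mfg}. The only difference is how the finite-presentation hypothesis is handled. The paper dispatches what you call ``the main obstacle'' in one line, citing \cite[Proposition~1.21, Remark~1.24]{SechCob} for the fact that $C$ itself is already a coherent (hence finitely presented) $\LL$-module, so no auxiliary $C''$ is needed. Your workaround via a finitely presented comodule $C''\twoheadrightarrow C$ is a valid alternative; the only point that deserves a bit more care is the parenthetical claim that the subcomodule generated by finitely many elements is finitely generated over $\LL$: this is true, but one should choose the $\gamma_{ij}$ to be part of an $\LL$-basis of $\LL B$ and then use coassociativity (and the counit axiom to see that each $n_i$ lies in the span of the $m_{ij}$), rather than asserting it as immediate.
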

\begin{proof}
By Lemma \ref{lm:RNP_surj} it is sufficient to prove that 
the map $K(m)^*(X)\rarr K(m)^*(\overline{X})$ is surjective, 
whenever $K(n)^*(X)\rarr K(n)^*(\overline{X})$ is so.

Consider the following short exact sequence of $(\LL,\LL B)$-comodules:
$$ \Omega^*(X)\xrarr{\rho} \Omega^*(\overline{X})\rarr C \rarr 0.$$

Clearly, the map $\rho\ot_\LL K(m)$ is surjective iff $C\ot_\LL K(m)=0$. However, ${C\ot_\LL K(n)=0}$ by the assumption, and $C$ is a coherent
$\LL$-module by \cite[Proposition~2.21, Remark~2.24]{SechCob}.
 Therefore, Corollary~\ref{cor:support_coh_mfg} applies.
\end{proof}
\begin{cor}\label{zzzzz}
If $X$ is a projective homogeneous variety
such that $M_{K(n)}(X)$ is split, 
then $M_{K(m)}(X)$ is split for all $1\le m\le n$.
\end{cor}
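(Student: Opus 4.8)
The plan is to obtain this as an immediate consequence of Proposition~\ref{prop:height_tate_motive}. To apply that proposition to a smooth projective homogeneous variety $X$, the only thing to verify is that its hypotheses are met: namely that $X$ is geometrically cellular and that the Morava motives $M_{K(m)}(X)$ and $M_{K(n)}(X)$ satisfy the Rost nilpotence property for all $1\le m\le n$.

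First I would recall that geometric cellularity is automatic: over a splitting field $E$ of the underlying group, $X_E$ is a projective homogeneous variety under a split reductive group, and the Bruhat decomposition presents it as a disjoint union of affine spaces, hence as a cellular variety. This is classical and requires no new argument. Secondly, for Rost nilpotence I would invoke \cite[Corollary~3.5]{GV18}, recalled in Section~\ref{rostnil}: if $M=(X,\pi)$ is a direct summand of the Chow motive of a twisted flag variety, then Rost nilpotence holds for $M^A$ for every oriented cohomology theory obtained from $\Omega^*$ by a change of coefficients. Applying this with $\pi$ the diagonal (so that $M_A(X)$ itself is the relevant summand) and $A^*=K(m)^*$, resp. $A^*=K(n)^*$, gives exactly the Rost nilpotence property needed.

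With both points in hand, Proposition~\ref{prop:height_tate_motive} applies verbatim and yields that $M_{K(m)}(X)$ is split whenever $M_{K(n)}(X)$ is split, for every $1\le m\le n$, which is the assertion. I do not expect any genuine obstacle here: the substantive content — reducing splitting over $F$ to the vanishing of $C\otimes_\LL K(m)$, where $C=\coker\big(\Omega^*(X)\to\Omega^*(\overline X)\big)$ viewed as an $(\LL,\LL B)$-comodule, together with the chromatic implication $C\otimes_\LL K(n)=0\Rightarrow C\otimes_\LL K(m)=0$ supplied by the Landweber filtration in Corollary~\ref{cor:support_coh_mfg} — is already packaged inside Proposition~\ref{prop:height_tate_motive}. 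The corollary therefore amounts to checking the two hypotheses above and quoting that proposition.
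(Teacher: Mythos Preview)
Your proposal is correct and follows essentially the same approach as the paper: the paper's proof consists of a single sentence invoking \cite[Corollary~3.5]{GV18} to verify Rost nilpotence for $M_{K(m)}(X)$ (for every free theory), leaving the geometric cellularity of $X$ implicit, and then Proposition~\ref{prop:height_tate_motive} applies.
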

\begin{proof}
By \cite[Corollary~4.5]{GV18} for every free theory $A^*$ 
the motive $M_A(X)$ satisfies the Rost nilpotence property.
\end{proof}

\begin{cor}\label{cor:morava_to_chow_motive}
Let $X$ be a projective homogeneous variety with $\dim X\le p^n$.
Then the Chow motive of $X$ with $\zz_{(p)}$-coefficients is split if and only if the $K(n)^*$-motive of $X$ is split.
\end{cor}
\begin{proof}
If the Chow motive of $X$ with $\zz_{(p)}$-coefficients is split, then obviously the $K(n)^*$-motive of $X$ is split as well.

Assume now that the $K(n)^*$-motive of $X$ is split.
The operations $$c_i^{\CH}\colon K(n)^*(X)\to\CH^i(X)\otimes\zz_{(p)}$$ are surjective for $i\le p^n$ and commute with extensions of scalars (see Theorem~\ref{th:morava_gamma_properties}).

Therefore, condition~(5) of Lemma~\ref{lm:RNP_surj} is satisfied for $A^*=\CH^*\otimes\zz_{(p)}$. This
implies the corollary.
\end{proof}
\end{ntt}

\begin{ntt}[Symmetric operations of Vishik and $K(n)^*$-split varieties]\label{sec:topfilt_symm}
We have used above the Landweber--Novikov operations 
which are stable (\cite[Definition~3.4]{Vish1})
and provide constraints on the structure of cobordism which do not ``see'' the grading.
Being interested in the Chow groups and in the topological filtration of small codimension we employ more subtle unstable operations,
among which the most powerful are symmetric operations.

Recall that Vishik has defined symmetric operations in algebraic cobordism
first for $p=2$ in \cite{Vish_Symm_2} using elaborate and elegant constructions
 and then for all primes in \cite{Vish_Symm_all} 
 using \cite[Theorems~5.1]{Vish1, Vish2} 
 classifying all operations.
We follow the latter approach and explain several properties of these operations.

Fix a set of integers $\bar{i}=\{i_j\mid 0<j<p\}$ of 
all representatives of non-zero integers modulo $p$, 
and denote $\mathbf{i}=\prod_{j=1}^{p-1} i_j$. 
There exists a Quillen-type Steenrod operation in algebraic cobordism
$$ St(\bar{i})\colon\Omega^*\rarr \Omega^*[\mathbf{i}^{-1}][[t]][t^{-1}],$$
which induces a morphism of formal group laws
uniquely defined by the 
power series $\gamma(x)=x\prod_{j=1}^{p-1}(x+_\Omega i_j\cdot_\Omega t)$.
We will sometimes drop $\bar{i}$ from
the notation of $St$.

\begin{thm}[Vishik, {\cite[Theorem~7.1]{Vish_Symm_all}}]

{\ }

There exists a unique operation 
$\Phi(\bar{i})\colon\Omega^*\rarr \Omega^*[\mathbf{i}^{-1}][t^{-1}]$,
called {\it the symmetric operation},
such that 
\begin{equation}\label{eq:symm}
(\square^p-St(\bar{i})-\frac{p\cdot_\Omega t}{t}\Phi(\bar{i}))\colon\Omega^*
\rarr \Omega^*[\mathbf{i}^{-1}][[t]]t,
\end{equation}
where $\square^p$ is the $p$-power operation.
\end{thm}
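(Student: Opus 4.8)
The plan is to deduce the statement from Vishik's classification of operations out of theories of rational type (\cite[Theorem~5.1]{Vish1}, \cite[Theorem~5.1]{Vish2}): an operation $\Omega^*\rarr B^*$ is the same as a compatible family of maps on $\Omega^*\bigl((\mathbb{P}^\infty)^{\times N}\bigr)=\LL[[z_1,\ldots,z_N]]$, $N\ge 0$, and an identity of operations holds as soon as it holds on all $(\mathbb{P}^\infty)^{\times N}$. So it suffices to build $\Phi(\bar i)$ on each $\LL[\mathbf i^{-1}][[z_1,\ldots,z_N]]$, verify there the compatibility conditions of the classification, and check \eqref{eq:symm} on products of projective spaces. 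As bookkeeping I would use the $\Omega^*$-module decomposition $\Omega^*[\mathbf i^{-1}][[t]][t^{-1}]=\Omega^*[\mathbf i^{-1}][t^{-1}]\oplus t\,\Omega^*[\mathbf i^{-1}][[t]]$ into non-positive and strictly positive powers of $t$, with projection $\pi_{\le 0}$ onto the first summand. Since $\tfrac{p\cdot_\Omega t}{t}=p+a_1t+a_2t^2+\cdots$ with $a_j\in\LL$, the operator $v\mapsto\pi_{\le 0}\bigl(\tfrac{p\cdot_\Omega t}{t}v\bigr)$ on polynomials in $t^{-1}$ is multiplication by $p$ plus a locally nilpotent correction sending each $t^{-k}$ into the span of $1,t^{-1},\ldots,t^{-k+1}$; hence it is injective whenever $p$ is a nonzerodivisor, and \eqref{eq:symm} is equivalent to the single equation $\pi_{\le 0}\bigl(\tfrac{p\cdot_\Omega t}{t}\,\Phi(\bar i)\bigr)=\pi_{\le 0}\bigl(\square^p-St(\bar i)\bigr)$.

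Uniqueness is then immediate: the above operator is injective on each torsion-free ring $\LL[[z_1,\ldots,z_N]]$, so two solutions of \eqref{eq:symm} agree on all products of projective spaces and hence coincide by the classification. The same triangular structure also \emph{forces} the candidate: on $(\mathbb{P}^\infty)^{\times N}$, writing $\pi_{\le 0}\bigl(\square^p-St\bigr)=\sum_{m\le 0}c_m t^m$, the coefficients of $\Phi=\sum_{m\le 0}\phi_m t^m$ must obey the recursion $\phi_m=\tfrac1p\bigl(c_m-a_1\phi_{m-1}-a_2\phi_{m-2}-\cdots\bigr)$, which has a unique solution over $\LL[\mathbf i^{-1}]\otimes\qq$. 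That these $\phi_m$ are compatible across different $N$, hence assemble into an operation $\Omega^*\rarr\Omega^*[\mathbf i^{-1}][t^{-1}]$, is routine: $\square^p$, $St$ and multiplication by $\tfrac{p\cdot_\Omega t}{t}$ are natural for the structure maps of $(\mathbb{P}^\infty)^{\times\bullet}$, which only touch the $z_i$ and not $t$, and $\pi_{\le 0}$ commutes with these maps; granting this, \eqref{eq:symm} holds on every $(\mathbb{P}^\infty)^{\times N}$ by construction, hence as an identity of operations.

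The one real point --- and the main obstacle --- is \emph{integrality}: that the rationally defined $\Phi$ actually takes values in $\Omega^*[\mathbf i^{-1}][t^{-1}]$, equivalently that $\pi_{\le 0}\bigl(\square^p-St\bigr)$ lies in the image of $v\mapsto\pi_{\le 0}\bigl(\tfrac{p\cdot_\Omega t}{t}v\bigr)$, i.e.\ that the recursion for $\phi_m$ never genuinely divides by $p$. On a top monomial $z_1\cdots z_N$ this is transparent: $St$ is multiplicative with $St(c_1(L))=\gamma(c_1(L))$, so $St(z_1\cdots z_N)|_{t=0}=\prod_k z_k^p=\square^p(z_1\cdots z_N)$ and $\pi_{\le 0}\bigl(\square^p-St\bigr)$ vanishes outright; all the content sits in the action of $St$ on the Lazard ring $\LL=\Omega^*(\mathrm{pt})$ (and, through the coefficients, on general elements), where $St$ develops poles in $t$. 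To get hold of these I would compute $St$ on $\LL$ via Vishik's Riemann--Roch (Proposition~\ref{th:riemann_roch}, Corollary~\ref{cr:riemann-roch}): after factoring the structure morphism of a variety $Y$ through a projective embedding, this expresses $St([Y])$ as a push-forward weighted by the Todd-type factors $\bigl(\prod_{j=1}^{p-1}(\mu+_\Omega i_j\cdot_\Omega t)\bigr)^{-1}$ attached to the $\Omega$-roots $\mu$ of the virtual tangent bundle, each of which Laurent-expands in $t$ beginning in degree $-(p-1)$ --- the source of the negative powers. One then has to prove the required divisibility, by induction on $\dim Y$ and on the degree of the monomial, the key arithmetic input being a congruence in $\LL[[t]]$ controlling the difference between the total power operation and the Frobenius in terms of the $p$-series $p\cdot_\Omega t$ of the universal formal group law --- the algebraic shadow of the geometric construction of the symmetric operation for $p=2$ in \cite{Vish_Symm_2} via the $\ZZ/2$-quotient of $X\times X$, carried out uniformly for all $p$ in \cite{Vish_Symm_all}. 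This integral divisibility is where the actual work lies; everything else in the argument is formal.
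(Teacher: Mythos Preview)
The paper does not prove this theorem; it is quoted verbatim from Vishik \cite[Theorem~7.1]{Vish_Symm_all} and used as a black box. So there is no ``paper's own proof'' to compare your proposal against --- only Vishik's original, which the paper alludes to in one line: the construction ``for all primes in \cite{Vish_Symm_all} using \cite[Theorems~5.1]{Vish1, Vish2} classifying all operations.''

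Your framework is the right one and matches that description. Reducing to products of projective spaces via the classification theorem, extracting uniqueness from the injectivity of $v\mapsto\pi_{\le 0}\bigl(\tfrac{[p](t)}{t}\,v\bigr)$ on torsion-free coefficients, and isolating integrality as the sole non-formal ingredient is exactly how the argument is organized.

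That said, your proposal is not a proof: you explicitly leave the integrality step undone (``this integral divisibility is where the actual work lies; everything else in the argument is formal''), and what you offer towards it is only a heuristic. Two concrete issues with that heuristic. First, $\square^p$ is not additive, so the integrality check on $\LL[[z_1,\ldots,z_N]]$ does not reduce to the action on $\LL$ and on the top monomial $z_1\cdots z_N$ separately as your phrasing suggests; cross-terms from $(\sum\lambda_\alpha z^\alpha)^p$ must be controlled simultaneously with the poles of $St$. Second, the Riemann--Roch computation you sketch expresses $St([Y])$ in terms of the roots of the \emph{normal} bundle of an embedding, not the tangent bundle, and extracting from this a usable inductive statement on $\dim Y$ requires more than you indicate. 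In Vishik's actual proof the integrality is established directly on the universal power series $G^N(z_1,\ldots,z_N)$ parametrizing the operation (in the sense of \cite[Theorem~5.1]{Vish2}), via an explicit divisibility of $(\prod z_i)^p - \prod\gamma(z_i)$ by $[p](t)/t$ in non-positive $t$-degrees, and this is the computation you would need to carry out.
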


It is convenient to use ``slices'' of the symmetric operation $\Phi(\bar{i})$
defined as the coefficients of the monomials $t^{l}$ for $l\le 0$.
We will denote these operations as $\Phi_l(\bar{i})=\Phi_l$.

Fix a prime $p$ and for simplicity we will work $p$-locally,
in particular, using $BP^*$ instead of $\Omega^*$.
Recall that there exists a multiplicative projector
on $\Omega^*\ot\Zp$ making $BP^*$ a direct summand of $\Omega^*\ot\Zp$.
This allows one to restrict the symmetric operation to $BP^*$
even though it is non-additive (see \cite[Section 3]{Vish_Lazard}).

Recall that following Hazewinkel we have chosen the generators $v_n$ of
the ring $BP$ (see Section~\ref{sec:BP}).
Symmetric operations $\Phi_l$
allow to ``divide'' certain elements of $BP^*$ by elements $v_n$
as was observed e.g. in \cite[Section~3.2]{SechCob}. 
The following is an instance of this property. 

\begin{prop}\label{prop:symm_op}
Let $k>0$ and let $\alpha \in BP^{-k(p^n-1)}$ such that $\alpha \equiv v_n^k \mod I(n)$.

Then $\Phi_{-k(p-1)(p^n-1)-(p^n-1)}(\alpha) \equiv -v_n^{k-1} \mod I(n)$.
\end{prop}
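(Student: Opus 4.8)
The plan is to use the defining relation~\eqref{eq:symm} for the symmetric operation, restricted to $BP^*$ and evaluated on the element $\alpha\in BP^{-k(p^n-1)}$, and extract the coefficient of the single monomial $t^{-k(p-1)(p^n-1)-(p^n-1)}$. First I would recall from the construction that the $p$-power operation $\square^p$, the Steenrod operation $St(\bar i)$, and the symmetric slices $\Phi_l$ all preserve the comodule structure, and in particular that everything respects the $I(n)$-adic information: since the statement is only an assertion modulo $I(n)=(p,v_1,\ldots,v_{n-1})$, I may compute in $BP/I(n)\cong\F p[v_n,v_{n+1},\ldots]$ throughout. There $\alpha\equiv v_n^k$, and $\square^p(\alpha)=\alpha^p\equiv v_n^{kp}$, which carries zero power of $t$; also $p\cdot_\Omega t/t\equiv$ (unit)$\cdot t^{p^n-1}$ modulo $I(n)$ because the $p$-series of the height-$n$ formal group law over $BP/I(n)$ begins in degree $p^n$. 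So modulo $I(n)$ the relation~\eqref{eq:symm} reads
\begin{equation*}
\square^p(\alpha) - St(\bar i)(\alpha) - (\text{unit})\,t^{p^n-1}\,\Phi(\bar i)(\alpha) \;\in\; (BP/I(n))[[t]]\,t.
\end{equation*}
The key point is that $St(\bar i)(\alpha)$ and $\Phi(\bar i)(\alpha)$ are the only terms with negative powers of $t$, so the vanishing of negative-$t$ coefficients in the left-hand side forces, coefficient by coefficient, that $\Phi(\bar i)(\alpha)$ is determined by the negative-$t$ part of $St(\bar i)(\alpha)$.

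Next I would compute the relevant coefficient of $St(\bar i)(\alpha)$ modulo $I(n)$. By multiplicativity and the formula $\gamma(x)=x\prod_{j=1}^{p-1}(x+_\Omega i_j\cdot_\Omega t)$, the Steenrod operation on a coefficient-ring element reduces to evaluating how the classifying map of $\gamma$ acts; concretely, $St(\bar i)(v_n)$ has a known leading behaviour in $t$, and modulo $I(n)$ the formal sum $+_\Omega$ degenerates to the Morava formal group law, which is explicit. From the logarithm/formal-group-law formulas for $K(n)^*$ recalled in the excerpt one reads off the lowest-degree (most negative) power of $t$ appearing in $St(\bar i)(v_n)\bmod I(n)$ and its coefficient, which lands in $v_n^{0}$-degree times $t^{-(p-1)(p^n-1)}$ up to a unit; raising to the $k$-th power (using multiplicativity of $St$) produces a term $(\text{unit})\,v_n^{?}\,t^{-k(p-1)(p^n-1)}$. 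Matching the $t$-degree bookkeeping: the slice $\Phi_l$ is the coefficient of $t^{-l}$, and dividing by the extra $t^{p^n-1}$ from the $p$-series shifts $l$ by $p^n-1$, which is exactly why the index $-k(p-1)(p^n-1)-(p^n-1)$ appears. Tracking the $BP$-grading on both sides (degree $-k(p^n-1)$ for $\alpha$, and $t$ in degree $-(p^n-1)$, and $\Phi_l$ shifting degree appropriately) pins down that the output lies in $BP^{-(k-1)(p^n-1)}$ and equals a unit multiple of $v_n^{k-1}$ modulo $I(n)$; one checks the unit is $1$ either by a direct small computation or by invoking the normalization already used in \cite{SechCob}.

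The main obstacle I expect is the explicit identification of the coefficient: carefully computing $St(\bar i)(v_n)\bmod I(n)$ down to the exact power of $t$ and the exact unit in $\F p^\times$, since $St$ is built from an iterated formal-group-law expression and the bookkeeping of the $p-1$ factors $x+_\Omega i_j\cdot_\Omega t$ together with the $\mathbf i^{-1}$ normalization is delicate. A cleaner route, which I would try first, is to avoid recomputing $St$ from scratch and instead cite the analogous computation in \cite[Section~2.2]{SechCob} (the "division by $v_n$" mechanism): apply the already-established statement there that $\Phi$ realizes multiplication-by-$v_n^{-1}$ on the relevant graded pieces modulo $I(n)$, reducing the present proposition to the case $k=1$ and then to pure degree/normalization bookkeeping. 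Either way, the structural input — relation~\eqref{eq:symm}, multiplicativity of $\square^p$ and $St$, and the collapse of $+_\Omega$ and of the $p$-series to the height-$n$ situation modulo $I(n)$ — is routine; only the final constant requires care, and it is fixed by one explicit low-degree check.
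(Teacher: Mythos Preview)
Your proposal is correct and follows essentially the same approach as the paper: work modulo $I(n)$ in the defining relation~\eqref{eq:symm}, use that $St$ respects $I(n)$ (the paper phrases this via $St$ being an $\LL_{(p)}$-linear combination of Landweber--Novikov operations), cite \cite{SechCob} for the explicit value $St(v_n)\equiv -v_n\,t^{-(p-1)(p^n-1)}\bmod I(n)$, and then read off the coefficient after dividing by $[p]\equiv v_n t^{p^n-1}+\cdots$. One small slip: the leading term of $St(v_n)\bmod I(n)$ is $v_n^1$ (not $v_n^0$) times $t^{-(p-1)(p^n-1)}$, but your subsequent degree-counting argument recovers the correct exponent anyway.
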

\begin{proof}
By the definition of the symmetric operation
we have the following identity 
in the ring $BP[[t]][t^{-1}]$
in the coefficients of $t^{\le 0}$:

\begin{equation}\label{eq:symm_alpha}
 \alpha^p-St(\alpha) =^{t^{\le 0}} [p]\cdot \Phi(\alpha), \qquad [p]:=\frac{p\cdot_{BP} t}{t}.
\end{equation}

Recall that $St$ is a generalized specialization of the total Landweber--Novikov operation (\cite[p.~977]{Vish_Lazard}),
i.e. it can be obtained 
from the unstable total Landweber--Novikov operation $\Omega^* \rarr \Omega^*\ot_\LL \LL[b_0^{\pm 1}, b_1, b_2, \ldots]$
defined by the inverse Todd genus $\sum_{i=0}^\infty b_i t^{i}$.
Therefore, it is an (infinite) $BP$-linear combination of the Landweber--Novikov operations 
 (see \cite[Section~3]{Vish_Lazard} for more details). In particular, $St$ preserves the ideals $I(n)$, hence
 $St(\alpha)\equiv St(v_n^k) \mod I(n)$.

It follows from the Riemann--Roch theorem for multiplicative operations
 (see e.g. \cite[Lemma~2.16]{SechCob}) that $$St(v_n)\equiv v_n t^{-(p-1)(p^n-1)} \mod I(n).$$

The series $[p]$ appearing above is graded of degree 0 where we take $\deg t=1$.
Moreover, it starts with $p$, and therefore modulo $I(n)$ the smallest power of $t$
appearing in it is equal to $p^n-1$ and its coefficient is proportional to $v_n$.
The choice of Hazewinkel implies that it is exactly $v_n$ (\cite[A2.2.4]{Rav}),
i.e. $[p] = v_n t^{p^n-1} +\mathrm{higher\ degree\ terms}$.

Combining all this together, equation~\eqref{eq:symm_alpha} modulo $I(n)$
looks as:
$$ v_n^{kp}-v_n^kt^{-k(p-1)(p^n-1)}\equiv^{t^{\le 0}} (v_nt^{p^n-1}+\mathrm{higher\ degree\ terms})\Phi(\alpha) \mod I(n),$$
from which the statement follows
using \cite[Lemma~3.3]{SechCob} and the fact that $BP/I(n)$ is an integral domain.
\end{proof}

The previous proposition can be used to study rational elements in the $BP^*$-theory
as the following lemma shows. It will be a crucial step in the proof of Theorem~\ref{th:no_torsion} below.

For an element $z\in BP^r(X)$ we write $\deg z=r$.

\begin{lem}\label{lm:symm_top_shift}
Let $f\colon X\rarr Y$ be a morphism of smooth quasi-projective varieties.
Let $z\in BP^r(X)$. Assume that $r>\frac{p^n-1}{p-1}$ and for some $k\ge 0$ the element $v_n^k z$ belongs to the image of $BP^*(Y)$ 
under the map $f^*$.

Then there exists a homogeneous element $\beta\in BP$ such that the element
$\beta z$  belongs to the image of $f^*$ modulo $\tau^{r+1} BP^*(X)$ 
and
\begin{itemize}
    \item $\beta \equiv v_n^b \mod I(n)$ for some $b\ge 0$;
    \item $\deg(\beta z)=r-b(p^n-1)>\frac{p^n-1}{p-1}$.
\end{itemize}
\end{lem}
\begin{proof}
Let $x\in BP^*(Y)$ be such that $f^*(x) = v_n^k z$. 
We will apply the symmetric operation $\Phi$ to $x$
several times
producing the needed element $y\in BP^*(Y)$ such that $f^*(y)\equiv \beta z \mod \tau^{r+1} BP^*(X)$ for $\beta$ as in the statement of the proposition.
Since all operations commute with pullbacks, we just have to calculate how the operation $\Phi$ 
acts on $v_n^k z$.

Moreover, all operations preserve the topological filtration, 
and by \cite[Proposition~7.14]{Vish_Symm_all} 
there is a simple description of the action of the symmetric operation on $\mathrm{gr}^\bullet_\tau BP^*$.
Namely, for any $\lambda \in BP$ and $z$ as above we have
 $$\Phi(\lambda z) \equiv \mathbf{i}^r\cdot t^{r(p-1)}\cdot \Phi_{\le -r(p-1)}(\lambda) z \mod \tau^{r+1}BP^*(X),$$
 where $\Phi_{\le -r(p-1)}(\lambda)$ is the part of the polynomial $\Phi(\lambda)\in BP[t^{-1}]$
 with the degree of $t$ no greater than $-r(p-1)$.

Thus, to be able to use Proposition~\ref{prop:symm_op} and ``divide'' $v_n^k z$ 
by $v_n$ we need that $k>0$ (if $k=0$ we do not have to do anything) 
and $$-k(p-1)(p^n-1)-(p^n-1)\le -r(p-1).$$
Equivalently, $(r-k(p^n-1))(p-1)\le p^n-1$ or $\deg (v_n^k z)\le \frac{p^n-1}{p-1}$.
We can continue this process 
until we get the desired element $\beta z$ modulo $\tau^{r+1} BP^*(X)$
where ${\deg(\beta z)=r-b(p^n-1)>\frac{p^n-1}{p-1}}$.
\end{proof}

\begin{thm}\label{th:no_torsion}
Let $X$ be a smooth projective geometrically cellular variety 
such that the pullback map $f^*\colon K(n)^*(X)\rarr K(n)^*(\overline X)$ 
is an isomorphism, where $\overline X = X\times_{F} \overline{F}$.

Then the pullback maps 
\begin{equation}\label{eq:gr_Kn_CH_isom}
\gr^r_\tau K(n)^*(X)\rarr \gr^r_\tau K(n)^*(\overline X), \quad
\CH^r(X)\ot\Zp\rarr \CH^r(\overline X)\ot\Zp
\end{equation}
are isomorphisms for $r \le \frac{p^n-1}{p-1}$.

In particular, $\CH^r(X)$ has no $p$-torsion for all $r \le \frac{p^n-1}{p-1}$.
\end{thm}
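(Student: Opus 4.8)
The plan is to compare $K(n)^*(X)$ with $\CH^*(X)\ot\Zp$ through the topological filtration, using the fact that the filtration on $\tilde{K}(n)^*$ changes only every $p^n-1$ steps (Proposition~\ref{prop:top_filt_morava}) together with the canonical map $(\rho_{K(n)})_i\colon\CH^i\ot\Zp\rarr \gr^i_\tau K(n)^*$, which by Proposition~\ref{prop:chow_top_filt_morava} is an isomorphism with inverse $c_i^{\CH}$ for $i\le p^n$. The key point is that for $i\le \frac{p^n-1}{p-1}$ the index $i$ lies well below $p^n$, so these identifications are available. First I would reduce the statement to showing that the pullback $\gr^i_\tau K(n)^*(X)\rarr \gr^i_\tau K(n)^*(\overline X)$ is an isomorphism in the stated range: since $\overline X$ is cellular, $\CH^*(\overline X)\ot\Zp$ is free and the map $(\rho_{K(n)})_i$ over $\overline X$ is an isomorphism, so once we know the $\gr_\tau$-statement, the identification $c_i^{\CH}$ transports it to the Chow groups, and the absence of $p$-torsion in $\CH^i(X)$ follows because $\CH^i(X)\ot\Zp$ injects into the torsion-free group $\CH^i(\overline X)\ot\Zp$ (and $\CH^i$ is finitely generated so injectivity after $\ot\Zp$ together with injectivity after $\ot\qq$, which is classical for cellular $\overline X$, forces torsion-freeness).

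Next I would prove the $\gr_\tau$-isomorphism. Surjectivity is immediate: by Lemma~\ref{lm:RNP_surj}-type reasoning (or directly from the hypothesis that $f^*$ is an isomorphism), $K(n)^*(X)\rarr K(n)^*(\overline X)$ is onto, hence so is each associated graded piece. The content is injectivity, i.e.\ that a class in $\tau^iK(n)^*(X)$ restricting into $\tau^{i+1}K(n)^*(\overline X)$ already lies in $\tau^{i+1}K(n)^*(X)$. This is where the symmetric operations $\Phi_l$ enter. The strategy is the one indicated by Proposition~\ref{prop:symm_op}: a class in the kernel of $\gr^i_\tau$-restriction is detected, after a twist, by an element of $BP^{-k(p^n-1)}$ congruent to $v_n^k$ mod $I(n)$, and applying the appropriate slice $\Phi_{-k(p-1)(p^n-1)-(p^n-1)}$ divides by one power of $v_n$, lowering the relevant exponent. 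Iterating, one eventually reaches a contradiction with $c_1^{\CH}$ or with the generically-constant property unless the class was already in the next step of the filtration. The bound $i\le \frac{p^n-1}{p-1}$ arises precisely because each application of a symmetric operation shifts degrees by $(p-1)(p^n-1)$ and one needs $\lfloor \frac{i-1}{1}\rfloor$-many applications (roughly) to stay inside the range where the Landweber filtration of the relevant $BP$-module, as in Corollary~\ref{cor:Kn-trivial-ideals-of-BP}, still forces divisibility by $v_n$; pushing past $\frac{p^n-1}{p-1}$ the argument breaks because higher symmetric-operation slices start producing genuinely new torsion classes (the phenomenon analysed via the gamma filtration in Theorem~\ref{th:general_gamma_app}).

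The main obstacle I anticipate is controlling the interaction between the topological filtration on $X$ and the $(\LL,\LL B)$-comodule structure carried by $\Omega^*(X)$: symmetric operations are unstable and non-additive, so one cannot simply restrict them along $\tau^\bullet$ without checking that the output lands in the expected step. Concretely, I expect the delicate part to be verifying that the ``division by $v_n$'' of Proposition~\ref{prop:symm_op} can be performed \emph{relative to $X$} on a class that is only known to restrict suitably over $\overline X$ — one has to lift to cobordism, use that $C:=\coker(\Omega^*(X)\to\Omega^*(\overline X))$ is a coherent comodule with $C\ot_\LL K(n)=0$, and then argue that the symmetric operations see enough of the comodule structure to conclude $C\ot_\LL K(m)$-type vanishing in low degrees. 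Once this compatibility is set up, the combinatorics of degrees $k(p-1)(p^n-1)$ versus the codimension cutoff $\frac{p^n-1}{p-1}$ is bookkeeping, so the heart of the proof is the operation-theoretic input, not the numerics.
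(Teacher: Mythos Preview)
Your proposal identifies the right ingredients---the symmetric operations, Proposition~\ref{prop:symm_op}, the coherent comodule $C$ with $C\ot_{BP}K(n)=0$, and Corollary~\ref{cor:Kn-trivial-ideals-of-BP}---and the overall shape matches the paper. However, the execution of the ``injectivity'' step is imprecise in a way that leaves a genuine gap.

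The argument is not by contradiction, and no ``detection by $c_1^{\CH}$'' occurs. The paper works \emph{directly with the cell classes over $\overline X$}: for each cell $z\in BP^r(\overline X)$, the vanishing of $C\ot_{BP}K(n)$ gives a rational element $v_n^k z$ for some $k\ge 0$. One then applies suitable slices $\Phi_l$ to this rational element (not to the coefficient $v_n^k$ alone). The crucial input you flagged as an obstacle is resolved by Vishik's formula \cite[Proposition~7.14]{Vish_Symm_all}, which says that on $\gr^r_\tau BP^*$ the operation $\Phi$ acts by $\Phi(\lambda z)\equiv \mathbf{i}\cdot t^{r(p-1)}\Phi_{\le -r(p-1)}(\lambda)\cdot z$ modulo $\tau^{r+1}$. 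Combined with Proposition~\ref{prop:symm_op}, each application replaces $v_n^k$ by (something $\equiv$) $v_n^{k-1}$ \emph{modulo higher cells}, and is permissible precisely when the degree condition $\deg(v_n^k z)=r-k(p^n-1)\le\frac{p^n-1}{p-1}$ holds. So each step raises the degree by $p^n-1$, not by $(p-1)(p^n-1)$ as you wrote; the bound $\frac{p^n-1}{p-1}$ enters as the threshold beyond which the slice index needed to divide by $v_n$ is no longer available. The upshot is a rational element $\beta z+\sum\alpha_s z_s\in BP^j(\overline X)$ with $\beta\equiv v_n^b\bmod I(n)$ and either $j=r$ (if $r\le\frac{p^n-1}{p-1}$) or $j>\frac{p^n-1}{p-1}$ (otherwise). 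Since $BP^j=\tau^j BP^j$, this lands in $\tau^j$ over $X$. A \emph{decreasing induction on the codimension of the cells} then handles the tail $\sum\alpha_s z_s$ and yields that every $z_{K(n)}$ of codimension $>\frac{p^n-1}{p-1}$ lies in $\tau^{>\frac{p^n-1}{p-1}}K(n)^*(X)$; this inductive structure is missing from your sketch and is where the argument is actually organised.
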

\begin{proof}
For a smooth projective cellular variety $Y$ and a free theory $A^*$
the $A$-module $A^*(Y)$ is free, generated by chosen classes 
of desingularizations of (closed) cells. We will call these elements classes of cells,
and the codimension of the class of a cell is the codimension of the corresponding cell.
Moreover, the $r$-th part of the topological filtration on $A^*(Y)$ 
is generated by the cells of codimension no less than $r$.

It follows that $\CH^r(\overline X)\ot\Zp$ is torsion-free,
and the last claim of the theorem follows from claim~\eqref{eq:gr_Kn_CH_isom}.

For simplicity of notation we switch now from Morava $K$-theories with $\Z{p}[v_n, v_n^{-1}]$-coefficients
to Morava $K$-theories with $\Z{p}$-coefficients by sending $v_n$ to $1$. Clearly, this does not affect neither assumptions,
nor conclusions of the theorem. 

Under the assumptions of the theorem 
the pullback map from $\gr^r_\tau K(n)^*(X)$ to  
$\gr_{\tau}^r K(n)^*(\overline{X})$ is surjective for $r\colon 0\le r \le p^n-1$
since 
\begin{equation}\label{eq:gr_kn}
\gr^r_\tau \tilde{K}(n)^*= \tilde{K}(n)^r/\tau^{r+p^n-1}\tilde{K}(n)^r    
\end{equation}
in this range of $r$ 
by Proposition~\ref{prop:morava_grading}(3).
On the other hand, $\gr^r_\tau K(n)^*(\overline X)$ is a free $\Zp$-module
generated by the classes of cells of codimension $r$. 
Thus, to prove the theorem it is sufficient to show that
preimages under $f^*$ of classes of all cells of codimension 
greater than $\frac{p^n-1}{p-1}$ lie in $\tau^{>\frac{p^n-1}{p-1}} K(n)^*(X)$. 
Indeed, this would imply that $f^*$ is an isomorphism between 
$\tau^{r+p^n-1}K(n)^r(X)$ and $\tau^{r+p^n-1}K(n)^r(\overline X)$ for $r\le \frac{p^n-1}{p-1}$,
and therefore $f^*$ is also an isomorphism on $\gr^r_\tau \tilde{K}(n)^r$ by formula~\eqref{eq:gr_kn}.

For the class $z$ of a cell in $BP^r(\overline X)$
denote by $z_{K(n)}$ its image in $K(n)^*(\overline X)$.
Also abusing notation we denote the preimage of this element in $K(n)^*(X)$
under $f^*$ by the same letter.

We now argue by decreasing induction on $r$ from $\dim X+1$ to $\frac{p^n-1}{p-1}+1$
that $${z_{K(n)}\in \tau^{\frac{p^n-1}{p-1}+1} K(n)^*(X)}.$$

{\bf Base of induction} is trivial for $r=\dim X+1$, since $BP^r(X)=0$.

{\bf Induction step.}
Assume that for all classes $z_s$ of cells in $BP^{>r}(\overline X)$
the classes $z_{s,K(n)}$ lie in $\tau^{\frac{p^n-1}{p-1}+1} K(n)^*(X)$.

Denote by $C$ the cokernel 
of the map $BP^*(X)\rarr BP^*(\overline X)$.
It is a finitely presented $BP$-module 
with the structure of a comodule over the Hopf algebroid $(BP, BP_*BP)$ 
(\cite[Proposition~2.21, Remark~2.24]{SechCob}).
Moreover, $C\ot K(n)=0$ by the assumptions of the theorem,
and, therefore, by Corollary~\ref{cor:Kn-trivial-ideals-of-BP}
 the pullback map $$BP^*(X)[v_n^{-1}]\rarr BP^*(\overline X)[v_n^{-1}]$$
is surjective.
In particular, for every class of a cell $z\in BP^r(\overline X)$ of codimension $r$
there exists $k \ge 0$ such that $v_n^{k} z$ is a rational element.

If $z\in BP^r(\overline X)$ is the class of a cell of codimension $r>\frac{p^n-1}{p-1}$,
then by Lemma~\ref{lm:symm_top_shift} applied to $f:\overline X \rarr X$ we obtain that
the element $\beta z +\sum_s \alpha_s z_s \in BP^j(\overline X)$ is rational
for some $j>\frac{p^n-1}{p-1}$, $\alpha_s,\beta\in BP$ such that $\beta$ maps to $1$
in $K(n)$ and $z_s$ are classes of cells of bigger codimension (recall that $\tau^{r+1} BP^*(\overline X)$ is generated
by cells of codimension at least $r+1$). 

Let $y$ be an element of $BP^j(X)$ which maps to $\beta z +\sum_s \alpha_s z_s\in BP^j(\overline X)$ under the pullback map.
Then $y\in \tau^j BP^j(X)$, since $BP^j=\tau^j BP^j$ (the last formula holds by the definition of the topological filtration and by the fact that $BP$ contains no elements of strictly positive degree).
Therefore, the image of $y$ in $K(n)^*(X)$ also lies in $\tau^j K(n)^*(X)$,
and at the same time its image in $K(n)^*(\overline X)$ has the form  $z_{K(n)}+\sum [\alpha_s]_{K(n)} z_{s, K(n)}$
where $[\alpha_s]_{K(n)}$ is the image of $\alpha_s$ under the canonical morphism $BP\rarr K(n)=\Zp$.
However, by the induction assumption the preimages under the isomorphism $f^*$ of the elements
$z_{s, K(n)}$ already lie in $\tau^{\frac{p^n-1}{p-1}+1}K(n)^*(X)$,
hence the claim.

\medskip

As explained above it follows that the pullback map 
$\gr^i_\tau K(n)^*(X)\rarr \gr^i_\tau K(n)^*(\overline X)$ is an isomorphism 
for $i\le \frac{p^n-1}{p-1}$. The operation $c_i^{\CH}\colon\gr^i_\tau K(n)^*\rarr \CH^i\ot\Zp$
commutes with pullbacks by definition and induces an isomorphism
for $i\le p^n$  by Theorem~\ref{th:morava_gamma_properties},~\ref{item:chern_isom_top}).
It follows that the map $\CH^i(X)\ot\Zp\rarr \CH^i(\overline X)\ot\Zp$ 
is also an isomorphism for $i\le \frac{p^n-1}{p-1}$.
\end{proof}
\end{ntt}

\begin{ntt}[Finiteness of torsion in Chow groups via the gamma filtration]
We consider the Morava $K$-theory $K(n)^*$ with $v_n$ set to be $1$.

Above we have used the topological filtration on Morava $K$-theories
to show that there is no $p$-torsion in Chow groups  of certain varieties
up to codimension $\frac{p^n-1}{p-1}$. 
However, calculating graded factors of the topological filtration $\gr^i_\tau K(n)^*$
in the range between $\frac{p^n-1}{p-1}+1$ and $p^n$ is
a very complicated task even though it would still 
yield $\CH^i\ot\Zp$ by Theorem~\ref{th:morava_gamma_properties}.
Yet another approach to estimate $p$-torsion in Chow groups
is to use the gamma filtration instead of the topological filtration.

\begin{thm}\label{th:general_gamma_app}
Fix a prime $p$ and let $K(n)^*$ be the corresponding $n$-th
Morava $K$-theory. Assume that $X$ is a geometrically cellular smooth projective variety
such that the restriction map $K(n)^*(X)\rarr K(n)^*(\overline{X})$
is an isomorphism, where $\overline{X}=X\times_F \overline{F}$.

Then the $p$-torsion in $\CH^j(X)$
is a quotient of the $p$-torsion in $\gr^j_\gamma K(n)^*(\overline{X})$ for $j\le p^n$.

In particular, the $p$-torsion in the Chow groups of $X$ 
is finite in codimensions up to $p^n$
and it can be bounded based on the variety $\overline{X}$ only.
\end{thm}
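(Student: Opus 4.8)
The plan is to transport the surjection $c^{\CH}_j\colon\gamma^jK(n)^*\twoheadrightarrow\CH^j\ot\Zp$, valid in the range $1\le j\le p^n$ by Theorem~\ref{th:morava_gamma_properties}, through the hypothesised isomorphism $f^*\colon K(n)^*(X)\to K(n)^*(\overline X)$, and then to read off the $p$-torsion of $\CH^j(X)$ from the finitely generated $\Zp$-module $\gr^j_\gamma K(n)^*(\overline X)$.

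First I would check that the gamma filtration is strict for $f^*$. Since $\gamma^jK(n)^*$ is the ideal subpresheaf generated by the products $c^{K(n)}_{i_1}(\alpha_1)\cdots c^{K(n)}_{i_k}(\alpha_k)$ with $\sum_l i_l\ge j$, and the operations $c^{K(n)}_i$ commute with pullbacks, surjectivity of $f^*$ forces $f^*(\gamma^jK(n)^*(X))=\gamma^jK(n)^*(\overline X)$; as $f^*$ is moreover injective, it restricts to isomorphisms $\gamma^jK(n)^*(X)\xrightarrow{\sim}\gamma^jK(n)^*(\overline X)$, and hence to isomorphisms $\gr^j_\gamma K(n)^*(X)\xrightarrow{\sim}\gr^j_\gamma K(n)^*(\overline X)$ for every $j$.

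Next, fix $j$ with $1\le j\le p^n$ (the case $j=0$ is trivial, $\CH^0$ being torsion-free). By Theorem~\ref{th:morava_gamma_properties} the operation $c^{\CH}_j$ vanishes on $\gamma^{j+1}K(n)^*$ and is additive on $\gamma^jK(n)^*$, so it induces a homomorphism $\phi_j\colon\gr^j_\gamma K(n)^*(X)\to\CH^j(X)\ot\Zp$; by the same theorem $\phi_j$ is surjective and becomes an isomorphism after $\ot_{\Zp}\QQ$, so $\ker\phi_j$ is torsion. I would then invoke the elementary fact that for a surjection $\phi\colon G\twoheadrightarrow C$ of abelian groups with torsion kernel every torsion element of $C$ lifts to a torsion element of $G$, so that the torsion subgroup of $C$ equals $\phi(G_{\mathrm{tors}})$ and in particular is a quotient of $G_{\mathrm{tors}}$. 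Applying this to $\phi_j$, combining with the isomorphism $\gr^j_\gamma K(n)^*(X)\cong\gr^j_\gamma K(n)^*(\overline X)$ from the first step, and using flatness of $\Zp$ over $\ZZ$ (so that the torsion of $\CH^j(X)\ot\Zp$ is precisely the $p$-primary torsion of $\CH^j(X)$), one obtains that the $p$-torsion in $\CH^j(X)$ is a quotient of the $p$-torsion in $\gr^j_\gamma K(n)^*(\overline X)$.

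For the final assertion I would recall that, with the convention $v_n=1$ of Section~\ref{sec:chern_classes}, the coefficient ring of $K(n)^*$ is $\Zp$ and $K(n)^*(\overline X)$ is a free $\Zp$-module of finite rank because $\overline X$ is cellular; hence $\gamma^jK(n)^*(\overline X)$ and $\gr^j_\gamma K(n)^*(\overline X)$ are finitely generated $\Zp$-modules, their torsion subgroups are finite, and they depend only on $\overline X$. I do not anticipate a genuine obstacle here: all the substance is already packaged in Theorem~\ref{th:morava_gamma_properties}, and the only point that needs a little care is the strictness of the gamma filtration under $f^*$ in the first step --- note that, in contrast to the topological-filtration argument of Theorem~\ref{th:no_torsion}, no appeal to Rost nilpotence is required, since the isomorphism $f^*$ is assumed outright.
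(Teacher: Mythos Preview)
Your proof is correct and follows essentially the same route as the paper: identify $\gr^j_\gamma K(n)^*(X)$ with $\gr^j_\gamma K(n)^*(\overline X)$ via naturality of the operations, then invoke the surjection $c^{\CH}_j$ from Theorem~\ref{th:morava_gamma_properties}. In fact you supply a detail the paper leaves implicit --- namely, using part~iii) of that theorem to see that $\ker\phi_j$ is torsion, which is what guarantees that the torsion of the target is a quotient of the torsion of the source rather than merely being finite.
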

\begin{proof}
As the gamma filtration is defined using the operations
which commute with pullbacks by definition, 
the gamma filtrations
on $K(n)^*(X)$ and $K(n)^*(\overline{X})$ 
coincide via the change of the base field.
Therefore, the graded pieces of the gamma filtration of $X$ depend only on $\overline{X}$.

Note that as $\overline{X}$ is cellular, its Chow motive is of Tate type, i.e. is split, 
and, therefore, its algebraic cobordism motive
is of Tate type as well. Therefore, $K(n)^*(\overline{X})$ is a finitely generated free $\Zp$-module
generated by the classes of desingularizations of the closed cells. 
This proves that the graded pieces of the gamma filtration (on both $K(n)^*(X)$ 
and $K(n)^*(\overline{X})$) are finitely generated,
and thus have finite torsion. 

By Theorem~\ref{th:morava_gamma_properties},~\ref{item:chern_surj}) and~\ref{item:gr_chern_isom_rat})
we have surjective
additive maps $$c_j^{\CH}\colon\gr_\gamma^j K(n)^*(X)\rarr \CH^j(X)\otimes \Zp$$
for $j\le p^n$, which are isomorphisms rationally.
Therefore, $\CH^j(X)$ has finite torsion for every $j\le p^n$ which is bounded above by the torsion of
$\gr_\gamma^j K(n)^*(X)$.
\end{proof}

An advantage of this approach is that the calculations are of a purely combinatorial nature
and are often amenable as we will show in the case of quadrics in the next section.
However, the bounds obtained by the gamma filtration are not exact in general
(see Remark~\ref{rem:top_vs_gamma}).
\end{ntt}

\section{Bounds on torsion in Chow groups via Morava $K$-theory}\label{bounds}

In this section we will provide some bounds on torsion in Chow groups of quadrics. Before doing this we would like
to summarize known results in this direction. We apologize in advance in case we forgot to mention some contributions.

\begin{ntt}[Karpenko's bounds in small codimensions]

\begin{prop}[Karpenko]\label{th:karpenko_bounds}
Let $Q$ be a smooth projective anisotropic quadric of dimension $D$ defined over a field of characterstic not 2.
\begin{itemize}
\item \cite[Theorem~6.1]{Karp90}: $\mathrm{Tors} \CH^2(Q)=0$ for $D>6$;
\item \cite[Theorem~6.1]{Karp95}: $\mathrm{Tors} \CH^3(Q)=0$ for $D>10$;
\item \cite[Theorem~8.5]{Karp95}: $\mathrm{Tors} \CH^4(Q)=0$ for $D>22$;
\end{itemize}
\end{prop}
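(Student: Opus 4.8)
This proposition records theorems of Karpenko, and the plan is to follow his $K$-theoretic method from \cite{Karp90, Karp95}. Write $\dim\phi=D+2$ for the underlying anisotropic form. Since $D$ exceeds each of the stated bounds, the codimensions $i\in\{2,3,4\}$ satisfy $i<D/2$, so $\CH^i(\overline Q)\cong\mathbb{Z}$ is generated by the power $h^i$ of the hyperplane class, the restriction $\CH^i(Q)\to\CH^i(\overline Q)$ is surjective ($h^i$ is defined over $F$), and its kernel — which is torsion, hence equal to $\mathrm{Tors}\,\CH^i(Q)$ because $\CH^i(\overline Q)$ is torsion-free — is all that has to be shown to vanish. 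It suffices to work $2$-locally, the torsion in Chow groups of a quadric being $2$-primary.

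\textbf{The $K^0$ reduction.} First I would invoke Swan's computation of $K^0(Q)$: it is torsion-free, and the image of $K^0(Q)\to K^0(\overline Q)$ is described explicitly in terms of the indices of the even Clifford algebra of $\phi$. The canonical map $\CH^i(Q)\to\gr^i_\tau K^0(Q)$ and the $i$-th Chern class $c_i$ in the opposite direction compose, in either order, to multiplication by $(-1)^{i-1}(i-1)!$, i.e.\ by $-1$, $2$, $-6$ for $i=2,3,4$. Hence $\mathrm{Tors}\,\gr^i_\tau K^0(Q)=0$ forces $\mathrm{Tors}\,\CH^i(Q)$ to be killed by $(i-1)!$; for $i=2$ this forces $\mathrm{Tors}\,\CH^2(Q)=0$ outright. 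The task is therefore to compute $\gr^i_\tau K^0(Q)$ from the topological filtration on $K^0(Q)$ and the explicit image above: when $D$ is large the $K^0$-defect sits in the ``linear'' part, far from codimension $i$, and the only possible torsion in $\gr^i_\tau K^0(Q)$ is a class detected by a cohomological invariant of $\phi$ of small degree — of degree $3$ for $i=2$. For $D$ above the threshold this invariant has nowhere to live: reducing along the motivic decomposition of $Q$ over function fields of its subquadrics, one is left with Chow groups of anisotropic kernels too small to carry it. This settles $\CH^2$ for $D>6$.

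\textbf{Higher codimensions.} For $i=3,4$ the $K^0$-reduction leaves a residual obstruction of order dividing $2$ invisible to $K^0$; one removes it by a direct analysis of the Chow groups of the isotropic flag varieties and relative cellular spaces attached to $Q$, which is the technical core of \cite{Karp95}, together with Steenrod operations modulo $2$. The computation now splits into cases according to the first Witt index of $\phi$, and this explosion of bookkeeping is what forces the much larger bounds $D>10$ and $D>22$. A cleaner modern substitute for the $K^0$ step would be to replace $K^0$ by a theory with better divisibility — connective $K$-theory, $BP$, or, for forms lying in a power of the fundamental ideal, a Morava $K$-theory, for which $\gr^i_\tau K(n)^*\cong\CH^i$ with $p$-local coefficients and \emph{no} factorial correction for $i\le p^n$ by Proposition~\ref{prop:chow_top_filt_morava} — but the geometric input bounding the $K$-theoretic defect remains the same.

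\textbf{Main obstacle and a uniform shortcut.} The hard part is the codimension-$4$ analysis: the admissible $K$-theoretic defects and the Steenrod-operation combinatorics grow rapidly with $i$ with no uniform bound, which is precisely why the analogue for $\CH^{\ge 5}$ is the still-open Karpenko conjecture. For the subclass of forms in a power of $I$ one bypasses the case analysis entirely: if $\phi\in I^m$ then by Proposition~\ref{moravaproj} the $K(m-2)^*$-motive of $Q$ is split, hence $K(m-2)^*(Q)\to K(m-2)^*(\overline Q)$ is an isomorphism by Lemma~\ref{lm:RNP_surj}, and Theorem~\ref{th:no_torsion} together with Theorem~\ref{th:general_gamma_app} give $\mathrm{Tors}\,\CH^i(Q)=0$ for $i\le 2^{m-2}-1$ and finite torsion for $i\le 2^{m-2}$. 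This recovers torsion-freeness of $\CH^2$ and $\CH^3$ for anisotropic $\phi\in I^4$ (where necessarily $D\ge 14$) in one stroke, and it is this uniform viewpoint, rather than Karpenko's form-by-form method, that is developed in Section~\ref{bounds}.
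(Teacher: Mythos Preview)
The paper does not prove this proposition at all: it is stated purely as a summary of Karpenko's theorems, with the three bibliographic citations serving as the entire justification. There is no argument to compare against --- the proposition is background material, recalled in order to situate the new Morava-theoretic bounds of Section~\ref{bounds} in the existing literature.

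Your write-up is therefore not so much a proof as a (reasonable) narrative of Karpenko's strategy, together with a forward reference to the paper's own results. As such it is out of place here: if anything were needed, a single sentence ``See \cite{Karp90,Karp95}.'' would suffice. A couple of points in your sketch are also loose. The claim that torsion in Chow groups of a quadric is automatically $2$-primary is not a triviality one can simply ``work $2$-locally'' past; and your description of the $\CH^3$ and $\CH^4$ cases --- ``a residual obstruction of order dividing $2$ invisible to $K^0$'' removed by ``Steenrod operations modulo $2$'' --- does not match Karpenko's actual arguments in \cite{Karp95}, which predate the algebraic Steenrod operations and proceed instead through index-reduction formulas and a careful analysis of the topological filtration. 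Finally, your closing paragraph conflates the cited proposition with the paper's later Theorem~\ref{th:mult_torsion_quadric}: the Morava shortcut applies only to forms in $I^m$, not to arbitrary anisotropic quadrics of large dimension, so it does not recover Karpenko's bounds in the generality stated.
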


When dimension of a quadric is smaller than in the above proposition,
Karpenko gives some bounds for the torsion in $\CH^3$ and $\CH^4$
and explicitly computes $\CH^2$ (see \cite[Theorem~6.1]{Karp90}, \cite{Karp96}).

We remark at this point that there are examples of quadrics having infinite torsion in $\CH^4$ (see \cite{KM91}).

\end{ntt}

\begin{ntt}[Rost motives and excellent quadrics]

The Chow groups of Pfister quadrics and more generally of excellent quadrics are explicitely known.
This was computed by Rost in \cite[Theorem~5]{Ro90}, see also \cite[Theorem~7.1, Theorem~8.1]{KM02}. More generally,
Yagita computed the multiplicative structure of the Chow rings of excellent quadrics (see \cite{Ya08}).

In particular, the following result holds.
\begin{prop}[Rost]
Let $Q_{\alpha}$ be the Pfister quadric
corresponding to a pure non-zero symbol $\alpha\in H^n_{\et}(F,\zz/2)$, $n\ge 3$.

Then $\mathrm{Tors}\CH^i(Q_\alpha)=0$ for $i<2^{n-2}$
and $\mathrm{Tors}\CH^{2^{n-2}}(Q_\alpha)=\ZZ/2$.
\end{prop}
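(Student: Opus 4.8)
The plan is to deduce the statement from the results of Section~\ref{sec:morava_general}, the key input being that an anisotropic Pfister quadric attached to a degree-$n$ symbol has a split $K(n-2)^*$-motive. First I would record the geometry: since $\alpha$ is a non-zero pure symbol of degree $n$, $Q_\alpha$ is an anisotropic $n$-fold Pfister quadric of dimension $2^n-2$, its quadratic form lies in $I^n$, and by Rost (\cite[Theorem~7.1]{KM02}) its Chow motive decomposes as $\bigoplus_{i=0}^{2^{n-1}-1}R_n(i)$ with $R_n$ the Rost motive of $\alpha$. In particular, by Proposition~\ref{moravaproj} (equivalently by Proposition~\ref{pro62}(1), as $n-2<n-1$) the $K(n-2)^*$-motive of $Q_\alpha$ is split; Rost nilpotence holds for it by \cite[Corollary~3.5]{GV18}, so by Lemma~\ref{lm:RNP_surj} the pullback $K(n-2)^*(Q_\alpha)\to K(n-2)^*(\overline{Q_\alpha})$ is an isomorphism. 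Throughout I use the standard fact that torsion in Chow groups of quadrics is $2$-primary, so it is enough to bound $2$-torsion.

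The vanishing range $i<2^{n-2}$ is then immediate: Theorem~\ref{th:no_torsion}, applied with the theory $K(n-2)^*$ and $p=2$, has exactly the isomorphism above as its hypothesis and gives $\mathrm{Tors}\,\CH^i(Q_\alpha)=0$ for $i\le\frac{2^{n-2}-1}{2-1}=2^{n-2}-1$.

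For codimension $2^{n-2}$ I would prove a matching upper and lower bound. The upper bound is Theorem~\ref{th:general_gamma_app} with $j=2^{n-2}=p^{n-2}$, the boundary case of its range: it identifies $\mathrm{Tors}\,\CH^{2^{n-2}}(Q_\alpha)$ with a quotient of $\mathrm{Tors}\,\gr^{2^{n-2}}_\gamma K(n-2)^*(\overline{Q_\alpha})$, which depends only on the split quadric. Because $n\ge 3$, the codimension $2^{n-2}$ is strictly below the middle dimension $2^{n-1}-1$ of $\overline{Q_\alpha}$, so $\CH^{2^{n-2}}(\overline{Q_\alpha})\otimes\QQ$ is one-dimensional, and by Theorem~\ref{th:morava_gamma_properties} the group $\gr^{2^{n-2}}_\gamma K(n-2)^*(\overline{Q_\alpha})$ is a free $\ZZ_{(2)}$-module of rank one plus a finite torsion group. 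The remaining task is to compute that torsion group and check it is $\ZZ/2$: this is a combinatorial computation with the Chern-class operations $c_i^{K(n-2)}$ of Theorem~\ref{th:chern_classes}, carried out via the Riemann--Roch formula of Corollary~\ref{cr:riemann-roch} and the explicit behaviour of these operations on products of projective spaces recorded in Propositions~\ref{prop:constant_cpn} and~\ref{prop:constant_chern_classes2}, and it may be organized along the split Rost summands $\overline{R_n}=\ta\oplus\ta(2^{n-1}-1)$. For the lower bound, $\mathrm{Tors}\,\CH^{2^{n-2}}(Q_\alpha)\ne 0$: I would use that $R_n=R_n(0)$ is a direct summand of the Chow motive of $Q_\alpha$ together with the known value $\mathrm{Tors}\,\CH^{2^{n-2}}(R_n)=\ZZ/2$ (from the computation of $\CH^*(R_n)$, see \cite[Theorem~8.1]{KM02}), which forces $\mathrm{Tors}\,\CH^{2^{n-2}}(Q_\alpha)$ to contain $\ZZ/2$; with the upper bound it then equals $\ZZ/2$.

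The hard part is the exact determination of $\mathrm{Tors}\,\gr^{2^{n-2}}_\gamma K(n-2)^*(\overline{Q_\alpha})$: that a factor of $2$ appears is already forced by the divisibility built into the operation $c^{K(n-2)}_{p^{n-2}}$ (Proposition~\ref{prop:constant_cpn}), but excluding higher powers of $2$ and any further torsion requires a careful bookkeeping of the gamma filtration on the split quadric in precisely this codimension. A secondary point is that the lower bound as sketched borrows the classical value of $\CH^*(R_n)$; a purely Morava-theoretic proof of non-vanishing --- say by exploiting that the $K(n-1)^*$-motive of $Q_\alpha$ is \emph{not} split because the form fails to lie in $I^{n+1}$ --- would make the argument self-contained.
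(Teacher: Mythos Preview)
The paper does not actually prove this proposition: it is stated as a known result of Rost, with the proof attributed to \cite[Theorems~7.1, 8.1]{KM02}; the proposition is recorded only as background for the paper's own new bounds.

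That said, your outline is correct and is precisely the specialization of the paper's Theorem~\ref{th:mult_torsion_quadric} (and the Proposition immediately preceding it) to the Pfister case. In the paper's notation one has $q\in I^{n}$, so the relevant Morava level is $n-2$; the dimension is $D=2^n-2$, $d=2^{n-1}-1=1+2(2^{n-2}-1)$, hence $j=0$ and $r=2$, which is the even case, giving torsion at most $\ZZ/2$ in $\CH^{2^{n-2}}$. The vanishing for $i<2^{n-2}$ is exactly Theorem~\ref{th:no_torsion}, and the upper bound at $i=2^{n-2}$ is obtained in the paper by the same Riemann--Roch computation you indicate (Corollary~\ref{cr:riemann-roch} together with Proposition~\ref{prop:constant_cpn}), showing $l_d\in\gamma^{2^{n-2}}$ and then $2l_d\in\gamma^{2^{n-2}+1}$. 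One small over-claim in your write-up: you only need $\mathrm{Tors}\,\gr^{2^{n-2}}_\gamma K(n-2)^*(\overline{Q_\alpha})$ to be \emph{at most} $\ZZ/2$, not exactly $\ZZ/2$; the equality in $\CH^{2^{n-2}}$ then comes from the lower bound via $\CH^*(R_n)$, exactly as you and the paper both take from \cite{KM02}. Your idea of a self-contained lower bound via non-splitting of the $K(n-1)^*$-motive is not pursued in the paper and would not obviously produce a torsion class in the specific codimension $2^{n-2}$; the classical input from $R_n$ is the clean route.
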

\end{ntt}

\begin{ntt}[Vishik's calculation for generalized Albert's forms]

Consider a generalized Albert form of dimension $6\cdot 2^r$ over $F$, i.e. a form of the type
$\rho\otimes\varphi$, 
where $\rho$
is an Albert form, i.e.
$\rho = \langle a,b,-ab,-c,-d,cd\rangle$ for some $a,b,c,d\in F^\times$, and $\varphi$ is an $r$-fold Pfister form.
Note that by \cite[Lemma~1.4]{Vish-Albert} there exist anisotropic generalized Albert forms of dimension $6\cdot 2^r$ over suitable fields.

For a quadratic form $q$ denote by $Q$ 
the respective projective quadric.

\begin{prop}[Vishik, {\cite[Main Theorem]{Vish-Albert}}]
If a generalized Albert forms $q$ of dimension
$6\cdot 2^r$ with $r\ge 1$ is anisotropic, then $\mathrm{Tors} \CH^{2^r+1}(Q)\neq 0$.
\end{prop}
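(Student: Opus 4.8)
The plan is to derive this from Vishik's method of symmetric operations, of which Section~\ref{sec:topfilt_symm} above is the relevant $p$-local incarnation, organising the bookkeeping through the Morava framework of this paper. Since all torsion in the Chow groups of a quadric is $2$-primary, set $p=2$. For $r\ge 1$ the codimension $2^r+1$ lies strictly below the middle codimension $3\cdot 2^r-1$ of the $(6\cdot 2^r-2)$-dimensional quadric $Q$, so $\CH^{2^r+1}(\overline Q)=\ZZ\cdot h^{2^r+1}$ is freely generated by the rational hyperplane class, and $\mathrm{Tors}\,\CH^{2^r+1}(Q)$ is exactly the kernel of the restriction $\CH^{2^r+1}(Q)\rarr\CH^{2^r+1}(\overline Q)$. (For $r=0$ this is the classical anisotropic Albert quadric, where $\mathrm{Tors}\,\CH^2=\ZZ/2$.) The anisotropic Albert form $\rho$ lies in $I^2$, so $q=\rho\otimes\lla x_1,\ldots,x_r\rra\in I^{r+2}$, and since $\dim q=6\cdot 2^r<2^{r+3}$ the Arason--Pfister Hauptsatz gives $q\notin I^{r+3}$; thus by Proposition~\ref{moravaproj} the $K(r)^*$-motive of $Q$ splits while the $K(r+1)^*$-motive does not. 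By Theorem~\ref{th:general_gamma_app} the splitting of the $K(r)^*$-motive already forces the $2$-torsion in $\CH^j(Q)$ to be finite and controlled by $\overline Q$ for $j\le 2^r$, so $2^r+1$ is the first codimension where new torsion could appear; the goal is to show it does.

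First I would use the splitting of the $K(r)^*$-motive. Let $C$ be the cokernel of $BP^*(Q)\rarr BP^*(\overline Q)$; then $C\ot_{BP}K(r)=0$, so Corollary~\ref{cor:Kn-trivial-ideals-of-BP} makes $BP^*(Q)[v_r^{-1}]\rarr BP^*(\overline Q)[v_r^{-1}]$ surjective, i.e.\ every cell class of $\overline Q$ becomes rational after multiplication by a power of $v_r$. Pick a cell class $z\in BP^{2^r+1}(\overline Q)$ and the least $k\ge 1$ with $v_r^k z$ rational, and apply the symmetric operation $\Phi$ to this rational element repeatedly. By the action of $\Phi$ on $\gr^\bullet_\tau$ from \cite[Proposition~7.14]{Vish_Symm_all} together with Proposition~\ref{prop:symm_op}, each admissible step trades $v_r^j z$ for $v_r^{j-1}z$ modulo cells of higher codimension, admissibility being the degree bound $\deg(v_r^j z)\le\frac{2^r-1}{2-1}=2^r-1$ of Lemma~\ref{lm:symm_top_shift}. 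Since $2^r+1$ exceeds the $K(r)^*$-range $\frac{2^r-1}{2-1}$, the iterated division gets stuck: one ends with a rational element $\beta z+\sum_s\alpha_s z_s\in BP^j(\overline Q)$, where the $z_s$ have codimension $>2^r+1$ and $\beta\equiv v_r^b\bmod I(r)$ with $b\ge 1$, but with no rational lift of $z$ itself produced by this process.

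The main obstacle — and where the anisotropy of $q$ must genuinely enter — is to prove that this residual obstruction does not cancel, i.e.\ that $z$ is not rational modulo torsion in codimension $2^r+1$. For this I would pass to $K(r+1)^*$: as $2^r+1\le 2^{r+1}$, Theorem~\ref{th:morava_gamma_properties},~\ref{item:chern_isom_top}) identifies $\CH^{2^r+1}(Q)\ot\Zp$ with $\gr^{2^r+1}_\tau K(r+1)^*(Q)$ through $c^{\CH}_{2^r+1}$, so it suffices to show the image of $z$ in $\gr^{2^r+1}_\tau K(r+1)^*(\overline Q)$ is not rational — which is consistent with the $K(r+1)^*$-motive of $Q$ being non-split, but still requires checking that this non-splitting is visible in precisely this codimension. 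Running the symmetric-operation computation with the $2$-adic valuations of the structure constants controlled as in Proposition~\ref{prop:constant_chern_classes2}, one expects to find that $2z$ becomes rational modulo higher codimension while $z$ does not, producing a non-zero $2$-torsion class in $\CH^{2^r+1}(Q)$. The cleanest route — essentially Vishik's in \cite{Vish-Albert} — bypasses the Morava reformulation and applies the symmetric operation directly to the explicit motivic decomposition of the generalized Albert quadric; the genuinely hard inputs are then the identification of that decomposition (assembled from the biquaternion algebra attached to $\rho$ together with a Pfister contribution) and the exact valuations of the structure constants that keep the obstruction from vanishing.
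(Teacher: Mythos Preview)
This proposition is not proved in the paper; it is quoted from Vishik's preprint \cite{Vish-Albert} as background, and the paper's own contribution in this subsection is the complementary statement (Corollary~\ref{cr:bounds_genalbert}) that $\CH^j(Q)$ is torsion-free for $j\le 2^r$. So there is no proof here to compare your attempt against.

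That said, your proposal has a genuine gap. You write ``pick a cell class $z\in BP^{2^r+1}(\overline Q)$ and the least $k\ge 1$ with $v_r^k z$ rational'', and then try to run the symmetric-operation division of Lemma~\ref{lm:symm_top_shift}. But in codimension $2^r+1<d=3\cdot 2^r-1$ the only cell class of the split quadric is $H^{2^r+1}$, and that class is already rational over $F$. So $k=0$ and there is nothing to divide; the machinery of Section~\ref{sec:topfilt_symm} as you invoke it is about detecting non-rationality of the deep cells $l_s$ (which live in codimension $\ge d$), not about codimension $2^r+1$. Likewise, the non-splitting of the $K(r+1)^*$-motive is, as you yourself concede, far too coarse to locate torsion in a specific codimension below $d$.

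What is actually required is the opposite direction: one must produce a non-zero element of $\CH^{2^r+1}(Q)$ that restricts to zero over $\overline F$. Vishik's argument does this via the explicit motivic decomposition of the generalized Albert quadric (built from the biquaternion Rost-type summands attached to $\rho$) together with symmetric operations, constructing the torsion class rather than obstructing rationality of a cell. Your last paragraph names these ingredients but does not use them; without that construction the argument does not go through.
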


Below we will show that there is no torsion in $\CH^{j}(Q)$ for $j<2^r+1$ (Corollary~\ref{cr:bounds_genalbert}).
\end{ntt}

Finally, there are numerous results with computations of the Chow groups of generic quadrics and generic orthogonal Grassmannians
(see \cite{Ka17}, \cite{P16}, \cite{SV14}).

\begin{ntt}[The gamma and the topological filtration on Morava $K$-theories of quadrics]

In this section $p=2$.
Denote by $\overline{Q}$ a split quadric of dimension $D$ 
and assume that $D\ge 2^{n+2}-3$.

Denote by $d:=[D/2]$ 
the dimension of the maximal isotropic projective space inside $\overline Q$
and by $\iota\colon\mathbb{P}^d\rarr \overline{Q}$ the corresponding inclusion map.
Denote by $h\in K(n)^*(\overline Q)$ the first Chern class of the canonical line bundle $\mathcal{O}(1)$.
Abusing notation we will denote by the same letter the pullbacks of this class
along restrictions to open subsets of $\overline Q$.

The following proposition is well-known. We consider the Morava $K$-theory $K(n)^*$ with $v_n$ set to be $1$.

\begin{prop}\label{prop88}
The natural linear projection map $b\colon\overline{Q}\setminus \mathbb{P}^d \rarr \mathbb{P}^d$
induces an isomorphism
 $b^*\colon K(n)^*(\mathbb{P}^d)\rarr K(n)^*(\overline{Q}\setminus \mathbb{P}^d)$.
 
Moreover, there is a short (split) exact sequence of abelian groups:
$$0\rarr \oplus_{s=0}^d \Z{2}l_s \xrarr{\iota_*} K(n)^*(\overline{Q}) \xrarr{\pi^*}
K(n)^*(\mathbb{P}^d)\rarr 0 ,$$
where the map $\pi^*$ is a morphism of rings compatible with the gamma filtration and $l_s$ 
is the class of a linear projective space inside $\overline Q$ of dimension $s$.
\end{prop}
\begin{proof}
Since the statement of the proposition is well-known, we only
sketch the proof.

Let $(V,q)$ be the quadratic space of dimension $D+2$ with a split quadratic form $q$.
Let $W\subset V$ be the maximal totally isotropic subspace of $V$. Then $\dim W=d+1$.
The map $\iota_*$ is the push-forward of the embedding $\iota\colon\mathbb{P}^d=\mathbb{P}(W)\hookrightarrow\overline{Q}$.

The quadratic form $q$ induces a natural linear map $V\to W^*$.
This map induces a morphism $b\colon \overline{Q}\setminus\mathbb{P}(W)\hookrightarrow \mathbb{P}(V)\setminus\mathbb{P}(W)\to\mathbb{P}(W^*)=\mathbb{P}^d $ which is an affine bundle of rank $D-d$. Therefore, by homotopy invariance the homomorphism $b^*$ is an isomorphism.

Let $\theta\colon\overline Q\setminus\mathbb{P}(W)\hookrightarrow\overline Q$ be the open embedding. Then by the localization axiom the sequence
$$K(n)^*(\mathbb{P}^d)\xrightarrow{\iota_*}K(n)^*(\overline{Q})\xrightarrow{\theta^*}K(n)^*(\overline{Q}\setminus\mathbb{P}^d)\to 0$$ is exact. Now the homomorphism $\pi^*$ is defined as $ (b^*)^{-1}\circ\theta^*$. Using the fact that all objects here are free $\zz_{(2)}$-modules of suitable ranks one can check that the resulting exact sequence is exact on the left and is split.
\end{proof}

Note that $\pi^*$ in Proposition~\ref{prop88} induces surjective maps of abelian groups 
$${\gr_\gamma^rK(n)^*(\overline{Q})\rarr \gr_\gamma^r K(n)^*(\mathbb{P}^d)}$$ for $r\ge 0$.
A direct calculation shows that $\gr_\gamma^r K(n)^*(\mathbb{P}^d)$ has no torsion
for all $r$, i.e. it equals $\Z{2}$ for $0\le r\le d$ and $0$ for $r>d$.
Thus, we have $${h^r \in \gamma^rK(n)^*(\overline Q) \setminus \gamma^{r+1}K(n)^*(\overline Q)}$$
for $0\le r\le d$
(one could also see this using 
rational comparisons of Theorem~\ref{th:morava_gamma_properties}).

We claim that the torsion in $\gr^r_\gamma K(n)^*(\overline{Q})$ is generated
by elements of $\mathrm{Im}\, \iota_*$. Indeed, take an element $\alpha$ from $\gamma^r K(n)^*(\overline{Q})$. By Proposition~\ref{prop88} one can express $\alpha$ as a linear combination of elements from $\mathrm{Im}\, \iota_*$ and elements $h^k$ with $k\ge r$. Taking $\alpha$ modulo $\gamma^{r+1} K(n)^*(\overline{Q})$ we may assume that it is a linear combination of elements from $\mathrm{Im}\, \iota_*$ and the element $h^r$, say, with a coefficient $a$.

If $\alpha$ gives a torsion element in $\gr^r_\gamma K(n)^*(\overline{Q})$, then it maps to a torsion element in $\gr^r_\gamma K(n)^*(\mathbb{P}^d)$, hence to $0$. But it maps to $a\hslash^r$ where $\hslash$ is the first Chern class of $\mathcal{O}(1)$ on $\mathbb{P}^d$. Therefore, $a=0$.

We recall the multiplication structure in $K(n)^*(\overline Q)$.

\begin{prop}\label{prop:mult_morava_quadric}
\begin{enumerate}
\item\label{item:transversal_intersection}
We have $ h\cdot l_i = l_{i-1}$ where we denote $l_{-1}=0$.

\item\label{item:mult_middle_quadric}
If the dimension of the quadric is odd,
then $h^{d+1} \equiv 2l_d \mod (l_j\mid 0\le j<d)$.
Moreover, $h^{d+1}$ is expressible in terms of $l_j$ 
with $j\equiv d \mod 2^n-1$. 
\end{enumerate}
\end{prop}
\begin{proof}
To prove (\ref{item:transversal_intersection}) note that $h$ can be represented 
by a general hyperplane section of $\overline Q$, so that it intersects transversally the linear subspace
representing the class $l_i$. The product $h\cdot l_i$ is represented by their intersection, 
which is then a linear subspace of dimension one less.

Part (\ref{item:mult_middle_quadric}) follows from the well-known multiplication in the Chow ring of $\overline Q$.
\end{proof}

For simplicity of notation set $l_r=0$ for $r<0$.

Let $D-d\equiv 1+j \mod 2^n-1$ where $j\in [0,2^n-2]$.

From now on we consider a non-split smooth projective quadric $Q$ of positive dimension such that the restriction map
\begin{align}\label{mmmm}
K(n)^*(Q)\to K(n)^*(\overline Q)
\end{align}
to a splitting field of $Q$ is an isomorphism.  Note that in this case $\dim Q\ge 2^{n+2}-3$. Indeed, in view of Lemma~\ref{lm:RNP_surj}, if $\dim Q$ is even, this follows from Proposition~\ref{moravaproj} and the Arason--Pfister Hauptsatz. If $\dim Q$ is odd, then by Proposition~\ref{moravaodd} the respective quadratic form is of the type $f\perp\langle c\rangle$ for some anisotropic form $f\in I^{n+2}$ and $c\in F^\times$. Therefore, since the dimension of the anisotropic part of 
$f\perp\langle c\rangle$ is at least $\dim f-1$, it follows from the Arason--Pfister Hauptsatz that $\dim Q\ge 2^{n+2}-3$.

Abusing notation we will consider the elements $h, l_i$ of $K(n)^*(\overline Q)$ defined above
also as the corresponding elements of $K(n)^*(Q)$ with respect to the isomorphism~\eqref{mmmm}.

\begin{lem}\label{lm:mult_shift_gamma_quadric}
Let $k\in[0,d]$.
Assume that the element $l_k$ lies in $${\gamma^rK(n)^*(Q) \mod \oplus_{s<k} \Z{2}l_s}$$  
(resp. in $\tau^rK(n)^*(Q) \mod \oplus_{s<k} \Z{2}l_s$) for some $r\ge 1$.

Then for every $u\ge 0$ the element
 $l_{k-u}$ lies in $\gamma^{r+u}K(n)^*(Q)$ 
 (resp. in $\tau^{r+u}K(n)^*(Q)$).
\end{lem}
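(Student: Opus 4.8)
The plan is to induct on $i$, with the base case $i=0$ being the hypothesis. For the induction step it suffices to pass from the statement for $i$ to the statement for $i+1$, i.e.\ from knowing $l_{k-i(2^n-1)}\in\gamma^{r+i(2^n-1)}K(n)^*(Q)\bmod\oplus_{s<k-i(2^n-1)}\Z{2}l_s$ to the analogous statement for $k-(i+1)(2^n-1)$ with filtration index shifted up by $2^n-1$. Here the key tool is multiplication by $h^{2^n-1}$: by Proposition~\ref{prop:morava_grading}\eqref{item:pushforward} the class $h=c_1^{K(n)}(\mathcal O(1))$ lies in $K(n)^1(Q)$, hence $h^{2^n-1}\in K(n)^{2^n-1}(Q)=K(n)^0(Q)$, and since the gamma (resp.\ topological) filtration is multiplicative as an ideal filtration, multiplying an element of $\gamma^{r'}$ (resp.\ $\tau^{r'}$) by $h^{2^n-1}$ lands in $\gamma^{r'+2^n-1}$ (resp.\ $\tau^{r'+2^n-1}$), because $h^{2^n-1}$ itself lies in $\gamma^{2^n-1}$ (resp.\ $\tau^{2^n-1}$) as a product of $(2^n-1)$ first Chern classes.

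Next I would analyse what $h^{2^n-1}\cdot l_k$ is modulo lower $l_s$'s. Iterating Proposition~\ref{prop:mult_morava_quadric}(1) $(2^n-1)$ times gives $h^{2^n-1}\cdot l_k\equiv l_{k-(2^n-1)}\bmod(l_j\mid j<k-(2^n-1))$, and moreover, by the ``expressible in terms of $l_j$ with $j\equiv k-(2^n-1)\equiv k\bmod 2^n-1$'' clause, all the correction terms are themselves $l_j$'s with index strictly between $k-(2^n-1)$ and $k$ in the same residue class; but those indices lie in the range $s<k$, so they are absorbed into the ambient quotient we are already working modulo. (One must also take care that $k-(2^n-1)$ could be negative, in which case $l_{k-(2^n-1)}=0$ by the convention $l_r=0$ for $r<0$ and the statement is vacuous; and when the quadric has odd dimension one must check that Proposition~\ref{prop:mult_morava_quadric}(2), the $h^d\equiv 2l_d$ relation, does not interfere, which it does not since $2l_d$ is $2$-divisible and lies in the span of the $l_s$'s anyway — but in our range $k\le d$ and the product $h^{2^n-1}l_k$ with $k<d$ never reaches $l_d$ via the ``$h\cdot l_i$'' rule.)

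Combining the two observations: apply multiplication by $h^{2^n-1}$ to the class of $l_{k-i(2^n-1)}$ in $\gamma^{r+i(2^n-1)}K(n)^*(Q)\bmod\oplus_{s<k-i(2^n-1)}\Z{2}l_s$; the product lies in $\gamma^{r+(i+1)(2^n-1)}K(n)^*(Q)$, and modulo $\oplus_{s<k-(i+1)(2^n-1)}\Z{2}l_s$ it equals $l_{k-(i+1)(2^n-1)}$ by the multiplication rule above. This is exactly the statement for $i+1$, closing the induction. The topological-filtration version is identical, replacing $\gamma^\bullet$ by $\tau^\bullet$ throughout and using that $\tau^\bullet$ is likewise a multiplicative filtration with $h^{2^n-1}\in\tau^{2^n-1}$.

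The main obstacle is purely bookkeeping: making sure that when one iterates the ``$h\cdot l_i\equiv l_{i-1}$'' relation the accumulated error terms stay inside the submodule $\oplus_{s<k-(i+1)(2^n-1)}\Z{2}l_s$ modulo which we quotient, rather than producing a genuinely new obstruction; this is guaranteed precisely by the second (``expressible in terms of $l_j$ with $j\equiv\cdots$'') clauses of Proposition~\ref{prop:mult_morava_quadric}, together with the hypothesis $D\ge 2^{n+2}-3$ which ensures $d$ is large enough that we never wrap around the residue classes in an unexpected way. No deep input beyond the already-established structure of $K(n)^*(\overline Q)$ and the multiplicativity of the two filtrations is needed.
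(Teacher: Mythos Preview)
Your increasing induction only proves the \emph{modulo} statement
\[
l_{k-i(2^n-1)} \in \gamma^{r+i(2^n-1)}K(n)^*(Q) \bmod \bigoplus_{s<k-i(2^n-1)} \Z{2}\,l_s
\]
for each $i$, whereas the lemma asserts the \emph{absolute} containment $l_{k-i(2^n-1)}\in\gamma^{r+i(2^n-1)}K(n)^*(Q)$. You conflate the two when you write ``This is exactly the statement for $i+1$, closing the induction.'' The error terms you are quotienting out are not known to lie in $\gamma^{r+i(2^n-1)}$ from your inductive hypothesis, so you cannot subtract them; your argument as written stops short of the actual claim.

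The paper fixes this by reversing the direction of induction. It multiplies the original element $l_k+\sum_{s<k}a_sl_s\in\gamma^r$ by $h^{i(2^n-1)}$ directly (rather than iterating $h^{2^n-1}$), observes that by the residue-class clause of Proposition~\ref{prop:mult_morava_quadric} the result is $l_{k-i(2^n-1)}+\sum_{t>i}b_t\,l_{k-t(2^n-1)}$, and then runs a \emph{decreasing} induction starting at $i=i_{\max}$ (the largest $i$ with $k-i(2^n-1)\ge 0$). At $i=i_{\max}$ the tail vanishes because all its indices are negative, giving the absolute statement there; for smaller $i$ the tail is absorbed by the already-established absolute statement for larger $t$. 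Your approach is salvageable along exactly these lines: after obtaining the modulo statement for every $i$, note (via the $\mathbb{Z}/(2^n-1)$-grading, which forces all error terms to be $l_{k-t(2^n-1)}$) that the modulo version at $i_{\max}$ is actually absolute, and then unwind downward. But that second pass is precisely the decreasing induction you omitted.
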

\begin{proof}
The proof is the same for the gamma and for the topological filtration
 and exploits only its multiplicativity.
We confine ourselves to the case of the gamma filtration. 
By our assumptions we have $l_k+\sum_{s<k} a_sl_{s} \in \gamma^rK(n)^*(Q)$
for some $a_s\in\Z{2}$.

We prove the statement by decreasing induction starting with the highest $u=k+1$. In this case $l_{-1}=0$
and the claim is trivial. 

By Proposition~\ref{prop:mult_morava_quadric} we have 
$$h^u \cdot (l_k+\sum_{s<k} a_sl_s) = l_{k-u} + \sum_{u\le s<k} a_s l_{s-u}.$$

The left-hand side lies in $\gamma^{r+u}K(n)^*(Q)$ by the multiplicativity of the gamma filtration
and the fact that $h\in \gamma^1K(n)^*(Q)$,
while the ``tail'' of the right-hand side lies in $\gamma^{r+u+1}K(n)^*(Q)$ by the induction assumption.
Therefore, we have ${l_{k-u}\in \gamma^{r+u}K(n)^*(Q)}$.
\end{proof}

Denote by $H$ the first Chern class of the canonical line bundle $\mathcal{O}(1)$ on $Q$
in the Brown--Peterson cohomology. Again abusing notation, denote by the same letter
the corresponding class in $BP^*(\overline{Q})$.
Denote by $L_r \in BP^*(\overline Q)$ the class of a linear subspace inside $Q$ of dimension $r$.
Note that the canonical map of theories $$\pi_{K(n)}\colon BP^*(\overline Q)\rarr K(n)^*(\overline Q)$$ 
sends $H$ to $h$ and $L_r$ to $l_r$.

\begin{lem}\label{lm:ld_top_morava}
$l_d \in \tau^{j+2^n} K(n)^*(Q)$.
\end{lem}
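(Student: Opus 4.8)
The statement $l_d \in \tau^{j+2^n} K(n)^*(Q)$ claims that the top cell class $l_d$ of the split quadric $\overline Q$ — which a priori lies only in $\tau^d K(n)^*(Q)$ — actually lies in a much deeper part of the topological filtration, where $j \in [0, 2^n-2]$ is the residue of $d-1$ modulo $2^n-1$. The natural approach is to produce a rational element of $BP^*(\overline Q)$ whose leading term (in the associated graded of the topological filtration) is $L_d$ (up to a unit modulo $I(n)$) and whose degree as a homogeneous element of $BP^*$ is large, then push it down to $K(n)^*$. The key tool is the symmetric operation $\Phi$ and its behaviour on $\mathrm{gr}^\bullet_\tau BP^*$ exactly as exploited in the proof of Theorem~\ref{th:no_torsion} (via Lemma~\ref{lm:symm_top_shift} and Proposition~\ref{prop:symm_op}).

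\textbf{Step 1: rationality after inverting $v_n$.} As in the proof of Theorem~\ref{th:no_torsion}, the cokernel $C$ of $BP^*(Q)\to BP^*(\overline Q)$ is a finitely presented $(BP, BP_*BP)$-comodule with $C\ot_{BP} K(n)=0$ (because $K(n)^*(Q)\to K(n)^*(\overline Q)$ is an isomorphism by~\eqref{mmmm}), so by Corollary~\ref{cor:Kn-trivial-ideals-of-BP} the map $BP^*(Q)[v_n^{-1}]\to BP^*(\overline Q)[v_n^{-1}]$ is surjective. Hence there is $k\ge 0$ with $v_n^k L_d$ rational, i.e.\ in the image of $BP^*(Q)$.

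\textbf{Step 2: divide down using symmetric operations, tracking the degree.} Now I would run the division procedure of Lemma~\ref{lm:symm_top_shift} applied to $z=L_d$, $r=d$. Each application of the symmetric operation $\Phi_l$ (with the correct slice $l$) replaces a rational element whose leading term is $\beta L_d$, $\beta\equiv v_n^{a}\bmod I(n)$, by one whose leading term is $\beta' L_d$ with $\beta'\equiv v_n^{a-1}\bmod I(n)$ (using Proposition~\ref{prop:symm_op} together with the formula $\Phi(\lambda z)\equiv \mathbf i\cdot t^{r(p-1)}\Phi_{\le -r(p-1)}(\lambda)z \bmod \tau^{r+1}$), provided the degree constraint $\deg(v_n^a L_d)\le \frac{p^n-1}{p-1}=2^n-1$ is met — here $p=2$. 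The point: starting from $v_n^k L_d$ in degree $d-k(2^n-1)$, we keep dividing, and the process halts precisely when the degree first exceeds $2^n-1$; since $d-1\equiv j\bmod 2^n-1$, i.e.\ $d\equiv j+1\bmod 2^n-1$, the degree at the final step lands at $j+1+ (2^n-1) = j+2^n$ — this is the arithmetic that makes the exponent $j+2^n$ appear. (One checks: the smallest value of $d - a(2^n-1)$ exceeding $2^n-1$, for $a$ running over integers with $d-a(2^n-1)\equiv j+1 \pmod{2^n-1}$, is indeed $j+2^n$ when $j\in[0,2^n-2]$, recalling the standing hypothesis $d\ge 2^{n+2}-3$ so there is genuine room to divide.) The ``tail'' terms $\sum \alpha_s L_s$ with $\mathrm{codim}\, L_s > d$ that appear in Lemma~\ref{lm:symm_top_shift} simply do not exist, as $L_d$ is the top cell. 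Thus we obtain a rational element of $BP^{j+2^n}(\overline Q)$ whose leading term in $\mathrm{gr}^d_\tau$ is $v_n^b L_d$ for some $b\ge 0$, with $b$ forced to be $0$ since after dividing we are in degree $j+2^n$ and $v_n^b L_d$ has degree $d-b(2^n-1)$.

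\textbf{Step 3: push down to $K(n)^*$.} Let $y\in BP^{j+2^n}(Q)$ map to this element. Then $y\in\tau^{j+2^n}BP^*(Q)$ since $BP^{j+2^n}=\tau^{j+2^n}BP^{j+2^n}$ (the filtration on a graded component of $BP$ is the whole thing in its own degree), and $\tau^\bullet$ is compatible with pushing to a free theory; its image in $K(n)^*(Q)$ is $l_d$ (up to the unit from $b=0$) modulo lower cells $l_s$, $s<d$, but again there are no such cells above codimension $d$... more precisely the image is $l_d + \sum_{s<d} c_s l_s$ with the sum running over $s<d$, and one concludes $l_d\in \tau^{j+2^n}K(n)^*(Q)$ after noting — or by a secondary induction on codimension as in the proof of Theorem~\ref{th:no_torsion} — that the correction terms $l_s$, being images of rational $BP$-classes forced into high filtration by the same divisibility argument applied to each $s$ with $s\equiv d\bmod 2^n-1$, already lie in $\tau^{j+2^n}$. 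In fact it is cleanest to organize Steps 2–3 as a single decreasing induction on the codimension $d$, exactly mirroring the induction in Theorem~\ref{th:no_torsion}, so that the $l_s$ terms are handled by the inductive hypothesis.

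\textbf{Main obstacle.} The delicate point is bookkeeping the degree shift through the iterated symmetric operations and verifying that the process terminates exactly at degree $j+2^n$ rather than overshooting — i.e.\ matching the combinatorics of $d \bmod (2^n-1)$ with the slice indices $l = -a(p-1)(p^n-1)-(p^n-1)$ dictated by Proposition~\ref{prop:symm_op}. A secondary subtlety is that Proposition~\ref{prop:symm_op} only controls the leading term modulo $I(n)$, so one must argue that modulo $I(n)$ is enough: this is fine because we only care about the image in $K(n)^*$, where $I(n)$ maps into the ideal generated by $(p,v_1,\dots,v_{n-1})$, all of which act appropriately, and because the topological filtration statement is detected on $\mathrm{gr}^\bullet_\tau$, on which $\Phi$ acts by the simple formula quoted above. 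Everything else is a routine adaptation of the argument already carried out for Theorem~\ref{th:no_torsion}, with the cell $L_d$ playing the role of a single generator and the absence of higher-codimension cells simplifying the tails.
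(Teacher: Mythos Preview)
Your approach---rerunning the symmetric-operation division of Theorem~\ref{th:no_torsion} directly on $L_d$ and tracking the $BP$-degree to land at $j+2^n$---is exactly what the paper's first sentence alludes to (``one could argue as in the proof of Theorem~\ref{th:no_torsion}''), but the paper then takes a much shorter path: cite Theorem~\ref{th:no_torsion} itself. If the maximal $i$ with $l_d\in\tau^i K(n)^*(Q)$ were below $2^n$, then $l_d$ would give a nonzero class in $\gr^i_\tau K(n)^*(Q)\cong\gr^i_\tau K(n)^*(\overline Q)$, yet over $\overline Q$ the class $l_d$ sits in codimension $d>2^n-1$, contradiction; hence $l_d\in\tau^{2^n}$. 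The paper then combines $l_d\in K(n)^{1+j}$ with the block structure of Proposition~\ref{prop:top_filt_morava} to upgrade to $\tau^{j+2^n}$. Your route makes the exponent $j+2^n$ appear transparently from the $BP$-degree, at the cost of redoing packaged work.

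Two slips in your write-up. First, ``$b$ forced to be $0$'' is wrong: your own equation $d-b(2^n-1)=j+2^n$ gives $b=r-1$ with $d=1+j+r(2^n-1)$, not $b=0$; this is harmless, since $v_n^{r-1}$ is still a unit in $K(n)$. Second, ``$L_d$ is the top cell so the tail does not exist'' is backwards: among the $L_s$ the class $L_d$ has the \emph{smallest} codimension, and the tail $L_{d-1},\ldots,L_0$ is certainly present---you half-notice this in Step~3. Organising the argument as a decreasing induction on $s$ (with the genuine base case $s=0$, the point class, where the tail really is empty) is the right fix.
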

\begin{proof}
One could argue as in the proof of Theorem~\ref{th:no_torsion}
to show that $l_d \in \tau^{2^n} K(n)^*(Q)$.

A more direct approach of the use of Theorem~\ref{th:no_torsion}
is the following. Let $i$ be the maximal positive integer
such that $l_d\in \tau^i K(n)^*(Q)$. If $i<2^n$, then $l_d$ defines 
a non-trivial element of the group $\gr^i_\tau K(n)^*(Q)$.
However, this group maps isomorphically to $\gr^i_\tau K(n)^*(\overline Q)$
where the class of $l_d$ is zero. Contradiction and, therefore, $i\ge 2^n$,
i.e. $l_d\in \tau^{2^n} K(n)^*(Q)$.

However, $l_d\in K(n)^{1+j}(Q)$ and $\tau^{2^n} K(n)^{1+j}(Q)=\tau^{j+2^n}K(n)^{1+j}(Q)$
by Proposition~\ref{prop:morava_grading}(3).
This implies the claim.
\end{proof}

\begin{prop}
In the notation of this section we have
\begin{enumerate}
\item  $\gr^s_\tau K(n)^*(Q)=\Z{2}$ for $1\le s\le 2^n-1$;
\item if $j\neq 0$, then  $\gr^{2^n}_\tau K(n)^*(Q)=\Z{2}$;
\item if $j=0$, and the dimension of the quadric is odd,
 then the torsion subgroup of $\gr^{2^n}_\gamma K(n)^*(Q)$
 is at most $\ZZ/2$;
\item if $j=0$ and the dimension of the quadric is even,
 let $d=1+r(2^n-1)$ for some\footnote{The case $r=2$ is the well-known case of a Pfister quadric.} $r\ge 2$.
 If $r$ is even, then the torsion in $\gr^{2^n}_\gamma K(n)^*(Q)$ is at most $\ZZ/2$.
 If $r$ is odd, then the torsion in $\gr^{2^n}_\gamma K(n)^*(Q)$ is at most $\ZZ/2^s$,
 where ${s=\min (\nu_2(r-1)+2, 2^n)}$. Here we denote by $\nu_2$ the $2$-adic valuation.
\end{enumerate}
\end{prop}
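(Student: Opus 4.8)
The plan is to reduce all four statements to the split quadric $\overline Q$ and, in the hard cases, to a combinatorial computation with the operations of Section~\ref{sec:morava_gamma_filtration}. There are three reduction steps. First, Theorem~\ref{th:no_torsion} gives that $\gr^i_\tau K(n)^*(Q)\to\gr^i_\tau K(n)^*(\overline Q)$ is an isomorphism for $i\le\frac{p^n-1}{p-1}=2^n-1$; since $D\ge 2^{n+2}-3$ forces $d=[D/2]>2^n-1\ge i$, the right-hand side is $\CH^i(\overline Q)\otimes\Z{2}=\Z{2}$ by Theorem~\ref{th:morava_gamma_properties},~\ref{item:chern_isom_top}), and this already proves (1), so only the case $i=2^n$ remains. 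Second, the same theorem identifies $\gr^{2^n}_\tau K(n)^*(Q)$ with $\CH^{2^n}(Q)\otimes\Z{2}$ via $c^{\CH}_{2^n}$, while Theorem~\ref{th:general_gamma_app} says the torsion of $\CH^{2^n}(Q)$ is a quotient of the torsion of $\gr^{2^n}_\gamma K(n)^*(\overline Q)$. Third, the operations $c^{K(n)}_i$ commute with base change, so $\gr^{2^n}_\gamma K(n)^*(Q)\cong\gr^{2^n}_\gamma K(n)^*(\overline Q)$, and by the discussion preceding Lemma~\ref{lm:mult_shift_gamma_quadric} this group is $\Z{2}\cdot h^{2^n}$ plus a torsion subgroup $T$ represented by classes in $\mathrm{Im}\,\iota_*$, all concentrated in the graded component $K(n)^1$ (Theorem~\ref{th:morava_gamma_properties},~\ref{item:gr_gamma_support}), since $2^n\equiv 1\bmod 2^n-1$). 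Everything thus comes down to computing $T$.

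To get at $T$, note that every $l_s\in\mathrm{Im}\,\iota_*$ has codimension $D-s\ge d\ge 2^n+1$, hence lies in $\tau^{2^n+1}$ on $\overline Q$; such a class can be pulled into $\gamma^{2^n}$ only through the action of operations on the rational class $l_d=\iota_*(1_{\mathbb P^d})$ together with multiplicativity of $\gamma^\bullet$. Applying $\phi\colon K(n)^*\to K(n)^*$ to $\iota_*(1_{\mathbb P^d})$ and invoking Corollary~\ref{cr:riemann-roch}: since $K(n)^*(\mathbb P^d)=\Z{2}[h]/(h^{d+1})$ and the $K(n)$-Chern classes of $N_{\mathbb P^d/\overline Q}$ are polynomials in $h=c^{K(n)}_1(\mathcal{O}(1))$, one obtains $\phi(l_d)=\iota_*(\text{a power series in }h)=\sum_{k\ge 0}g_k\,l_{d-k}$, using $\iota_*(h^k)=l_{d-k}$ modulo lower cells (Proposition~\ref{prop:mult_morava_quadric}); multiplying by powers of $h$ and invoking Lemma~\ref{lm:mult_shift_gamma_quadric} shifts these classes around by multiples of $2^n-1$. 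Consequently the only $l$-classes that can meet $\gamma^{2^n}$ are $l_d$, its descendants $l_{d-i(2^n-1)}$, and their images under such $\phi$'s, and these sit in the graded components $K(n)^{(D-d)\bmod(2^n-1)}$, $K(n)^{(D-d+2^n)\bmod(2^n-1)}$, and so on.

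For (2) we have $j\ne 0$, so $d\equiv 1+j\not\equiv 1\bmod 2^n-1$, whence $D-d\not\equiv 1$ except in the single residue $j=2^n-2$ in the odd-dimensional case; outside that exception none of the classes above land in $K(n)^1$, so $T=0$, and in the exceptional case Lemma~\ref{lm:ld_top_morava} already places $l_d$ in $\tau^{j+2^n}\subseteq\tau^{2^n+1}$, so again $T$ receives no contribution. Hence $\gr^{2^n}_\gamma K(n)^*(\overline Q)=\Z{2}$, $\CH^{2^n}(Q)$ is torsion-free, and $\gr^{2^n}_\tau K(n)^*(Q)\cong\CH^{2^n}(Q)\otimes\Z{2}=\ZZ$ (it has rank one, as it receives $c^{\CH}_{2^n}(h^{2^n})$, which restricts to a generator over $\overline Q$). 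For (3) and (4) we have $j=0$, i.e.\ $d\equiv 1\bmod 2^n-1$; writing $d=1+r(2^n-1)$, the bound $D\ge 2^{n+2}-3$ gives $r\ge 2$. Now $l_d$ and its descendants do feed into $K(n)^1$ (in the odd-dimensional case after first using the relation $h^d\equiv 2l_d$ of Proposition~\ref{prop:mult_morava_quadric} to move into $K(n)^1$), and the exact $2$-adic size of $T$ is extracted from the ``constants'' of the operations $c^{K(n)}_{2^n},\chi,\psi$ of Propositions~\ref{prop:constant_cpn} and~\ref{prop:constant_chern_classes2} at parameter $j=r$: $e_r\in\Z{2}^\times$ means no extra power of $2$ is needed when $r$ is even; $h_r\in 2^{\nu_2(r-1)+2}\Z{2}^\times$ forces exactly the drop $\nu_2(r-1)+2$ when $r$ is odd; and $f_r\in 2^{2^n}\Z{2}^\times$ shows that nothing of order larger than $2^{2^n}$ survives. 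Combining these with multiplicativity (Lemma~\ref{lm:mult_shift_gamma_quadric}) and with Lemma~\ref{lm:ld_top_morava} to control the bottom of the range yields $T\le\ZZ/2$ for $r$ even, $T\le\ZZ/2^s$ with $s=\min(\nu_2(r-1)+2,\,2^n)$ for $r$ odd, and the cruder bound $T\le\ZZ/2$ for odd-dimensional $Q$.

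The main obstacle is this last extraction. Riemann--Roch only delivers $l_d$ modulo lower cells and only up to the product of the constants $e_j,h_j,f_j$, so one must carefully compare $\gamma^{2^n}K(n)^1(\overline Q)\cap\mathrm{Im}\,\iota_*$ with $\gamma^{2^n+1}K(n)^1(\overline Q)\cap\mathrm{Im}\,\iota_*$ --- using that $\gamma^\bullet$ is an ideal subpresheaf and that all operations are generated by the $c^{K(n)}_i$ (Theorem~\ref{th:chern_classes}) --- and rule out that some other composition of operations produces a smaller power of $2$; this is exactly where the sharp $2$-adic valuations of Proposition~\ref{prop:constant_chern_classes2} and the ceiling $2^n$ coming from $f_r$ become indispensable.
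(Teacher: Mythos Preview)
Your approach to (1), (3), and (4) is essentially the paper's: Theorem~\ref{th:no_torsion} for (1), and for (3)--(4) the Riemann--Roch computation of $c^{K(n)}_{2^n}(l_d)$, $\chi(l_d)$, $\psi(l_d)$ via Propositions~\ref{prop:constant_cpn} and~\ref{prop:constant_chern_classes2}, combined with Lemma~\ref{lm:mult_shift_gamma_quadric} to push the lower $l_{d-s(2^n-1)}$ into $\gamma^{2^n+1}$. That part is fine in outline, though your description of (3) is garbled: when $j=0$ and $D$ is odd, $l_d$ already sits in $K(n)^1$ (its codimension is $d+1\equiv 2\equiv\ldots$ --- check this carefully), and the relation $h^d\equiv 2l_d$ is used not to ``move $l_d$ into $K(n)^1$'' but to exhibit $2l_d\in\gamma^{2^n+1}$, since $h^d\in\gamma^d\subset\gamma^{2^n+1}$.

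The genuine problem is part~(2). You try to deduce $\gr^{2^n}_\tau K(n)^*(Q)=\ZZ$ by first showing that the torsion $T$ in $\gr^{2^n}_\gamma K(n)^*(\overline Q)$ vanishes when $j\neq 0$. This cannot work: as Remark~\ref{rem:top_vs_gamma} records, the gamma filtration alone gives a $\ZZ/2$ bound in this range when $j\neq 0$, not zero. Concretely, your grading argument only shows that $l_d$ and its descendants $l_{d-i(2^n-1)}$ avoid $K(n)^1$; it says nothing about the other $l_s$ with $D-s\equiv 1\bmod(2^n-1)$, which do lie in $K(n)^1$ and can (and do) produce torsion in $\gr^{2^n}_\gamma$. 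Your handling of the ``exceptional case'' compounds the error: Lemma~\ref{lm:ld_top_morava} places $l_d$ in $\tau^{j+2^n}$, but $\tau^{j+2^n}\subset\gamma^{2^n+1}$ is false in general, so this does not kill the contribution to $T$.

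The paper's proof of (2) bypasses the gamma filtration entirely and works directly with $\tau$: Lemma~\ref{lm:ld_top_morava} gives $l_d\in\tau^{j+2^n}K(n)^*(Q)$, hence $l_d\in\tau^{2^n+1}$ once $j\ge 1$; then Lemma~\ref{lm:mult_shift_gamma_quadric} (applied to the topological filtration) puts every $l_s$, $s<d$, into $\tau^{2^n+1}$ as well. Thus $\gr^{2^n}_\tau K(n)^*(Q)$ is generated by $h^{2^n}$ alone and equals $\ZZ$. You should replace your gamma-filtration argument for (2) with this direct one.
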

\begin{proof}
(1) This follows from Theorem~\ref{th:no_torsion}.

(2) If $j\neq 0$, then by Lemma~\ref{lm:ld_top_morava} 
the element $l_d$ lies in $\tau^{2^n+1}K(n)^*(Q)$
and therefore, by Lemma~\ref{lm:mult_shift_gamma_quadric} the same holds for $l_s$, $s<d$. Thus, 
the graded factors $\gr^s_\tau K(n)^*(Q)$ for $s\le 2^n$ 
have to be generated by some power of $h$ and have no torsion. 

(3, 4) If $j=0$, then we will show now that $l_d \in \gamma^{2^n}K(n)^*(Q)\subset \tau^{2^n}K(n)^*(Q)$. 
Let $\iota\colon\mathbb{P}^{d}\hookrightarrow \overline Q$ be the inclusion 
of the maximal isotropic linear subspace.
In order to calculate $c^{K(n)}_{2^n}(l_d)=c^{K(n)}_{2^n}(\iota_* 1_{\mathbb{P}^{d}})$
 we apply the generalized Riemann--Roch formula (Corollary~\ref{cr:riemann-roch}).
Using Proposition~\ref{prop:constant_cpn}
we have $c^{K(n)}_{2^n}(\iota_* 1_{\mathbb{P}^{d}})$ is equal to $${e_r l_{d}+\sum_{s>0} b_sl_{d-s(2^n-1)}}$$ with $b_s\in\Z{2}$,
where $r$ is such that $d=1+r(2^n-1)$ and $e_r \in \Z{2}^\times$. This element
lies in $\gamma^{2^n}K(n)^*(Q)$ by the definition of the gamma filtration.

By Lemma~\ref{lm:mult_shift_gamma_quadric} we obtain that all other elements $l_s$, $s<d$,
lie in the higher parts of the gamma filtration. 
It follows that the torsion in the group $\gr^{2^n}_\gamma K(n)^*(Q)$ 
is generated by $l_d$.

(only 3) If the dimension of the quadric is odd, then 
by Proposition~\ref{prop:mult_morava_quadric} 
we have $h^{d+1} = 2l_d + \sum_{s>0} \beta_s l_{d-s(2^n-1)}$
for some $\beta_s \in \Z{2}$. By the multiplicativity of the gamma filtration
this element lies in $\gamma^{d+1} K(n)^*(Q)$. Recall that $d\ge 2^{n+1}-2$ by our assumptions, and therefore, 
$d+1> 2^n$.
 Thus, by the results above ${l_{d-s(2^n-1)}\in \gamma^{2^{n}+1} K(n)^*(Q)}$ for $s>0$,
 and we obtain that $2l_d \in \gamma^{2^{n}+1} K(n)^*(Q)$.
 This proves the claim.

(only 4) Let us consider the element $\chi(l_d) \in \gamma^{2^{n+1}-1}K(n)^1(Q)$
for the operation $\chi$ from Proposition~\ref{prop:constant_chern_classes2}.
Using the Riemann--Roch formula and Proposition~\ref{prop:constant_chern_classes2}, \ref{item:coef_2pn-1})
we obtain that this element is equal to $g_r l_d+\sum_{s>0} b_s l_{d-s(2^n-1)}$
for some $b_s \in \Z{2}$ ($g_r$ was defined in Proposition~\ref{prop:constant_chern_classes2}). Since the elements $l_{d-s(2^n-1)}$ lie in $\gamma^{2^{n+1}-1}K(n)^*(Q)$,
we obtain that $g_r l_d \in \gamma^{2^{n+1}-1}K(n)^*(Q)$.
If $r$ is even, then $g_r \in 2\Z{2}^\times$.
If $r$ is odd, then $\nu_2(g_r)=\nu_2(r-1)+2$.

If we use the operation $\psi$ from Proposition~\ref{prop:constant_chern_classes2}
instead of $\chi$, we obtain that $${2^{2^n} l_d \in \gamma^{2^{n+1}-1}K(n)^*(Q)}.$$
The result now follows.
\end{proof}

Combining this together with
Theorem~\ref{th:morava_gamma_properties}
and Propositions~\ref{moravaproj} and \ref{moravaodd} we obtain the following theorem.

\begin{thm}\label{th:mult_torsion_quadric}
Let $Q$ be a smooth quadric  of positive dimension over a field $F$ such that corresponding quadratic form $q$
 lies either in the ideal $I^{n+2}$
or in the set $\langle c\rangle+I^{n+2}$ inside the Witt ring for some $c\in F^\times$.
Let $D$ be the dimension of $Q$, $d:=[D/2]$,
and let $j\in[0,2^n-2]$ be such that $D-d\equiv 1+j\mod 2^n-1$.

Then $\CH^{0\le * \le 2^n-1}(Q)=\ZZ$ and 
\begin{enumerate}
\item if $j\neq 0$, then $\CH^{2^n}(Q)=\ZZ$.

\item if $j=0$,  and the dimension of the quadric is odd,
 then the torsion in $\CH^{2^n}(Q)$ is at most $\ZZ/2$;
 
\item if $j=0$ and the dimension of the quadric is even,
$d=1+r(2^n-1)$,
then the torsion in $\CH^{2^n}(Q)$ is at most $\ZZ/2^s$
where $s=1$, if $r$ is even, and $${s=\min (\nu_2(r-1)+2, 2^n)}$$ otherwise.
 Here we denote by $\nu_2$ the $2$-adic valuation.
\end{enumerate}
\end{thm}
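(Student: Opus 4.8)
The strategy is to recognize this as an assembly of the preceding results: once the standing hypothesis of Section~\ref{bounds} is verified for $Q$, the statement follows by feeding the computation of $\gr^{2^n}_\gamma K(n)^*(Q)$ from the Proposition just above into Theorems~\ref{th:no_torsion} and~\ref{th:general_gamma_app}. So the first thing I would do is check that the restriction map $K(n)^*(Q)\rarr K(n)^*(\overline Q)$ is an isomorphism. If $q\in I^{n+2}$, this holds because the $K(n)^*$-motive of $Q$ is split by Proposition~\ref{moravaproj} (taken with $m=n+2$); if $q$ lies in $\langle c\rangle+I^{n+2}$ in the Witt ring and $\dim Q$ is odd, the same holds by Proposition~\ref{moravaodd}, since $n<(n+2)-1$. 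As $Q$ is a quadric, Rost nilpotence holds for its $K(n)^*$-motive (Section~\ref{rostnil}), so by Lemma~\ref{lm:RNP_surj} this splitting is equivalent to $K(n)^*(Q)\rarr K(n)^*(\overline Q)$ being an isomorphism, which is exactly the input required by Theorems~\ref{th:no_torsion} and~\ref{th:general_gamma_app}. As recalled in the paragraph before Lemma~\ref{lm:mult_shift_gamma_quadric}, the Arason--Pfister Hauptsatz forces $D\ge 2^{n+2}-3$ unless $q$ is hyperbolic (in which case $Q$ is split and there is nothing to prove), so $d=[D/2]\ge 2^{n+1}-2>2^n$ and hence $\CH^i(\overline Q)=\ZZ$ for all $i\le 2^n$.

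For the range $0\le *\le 2^n-1$, I would apply Theorem~\ref{th:no_torsion} (here $\frac{2^n-1}{2-1}=2^n-1$): it gives that $\CH^i(Q)$ has no $2$-torsion and that $\CH^i(Q)\ot\Z{2}\rarr\CH^i(\overline Q)\ot\Z{2}=\Z{2}$ is an isomorphism for $i\le 2^n-1$. Since $\CH^i(Q)$ is finitely generated, has localization $\Z{2}$ at the prime $2$, and has only $2$-primary torsion (classical for quadrics), this forces $\CH^i(Q)=\ZZ$ in that range.

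In codimension $2^n$ I would distinguish the two cases of the preceding Proposition. If $j\neq 0$ it gives $\gr^{2^n}_\tau K(n)^*(Q)$ free of rank one; by Theorem~\ref{th:morava_gamma_properties},~\ref{item:chern_isom_top}) the operation $c^{\CH}_{2^n}$ identifies $\gr^{2^n}_\tau K(n)^*(Q)$ with $\CH^{2^n}(Q)\ot\Z{2}$, which is therefore torsion-free, and the argument of the previous paragraph again yields $\CH^{2^n}(Q)=\ZZ$. If $j=0$, I would invoke Theorem~\ref{th:general_gamma_app}: the $2$-torsion of $\CH^{2^n}(Q)$ is a quotient of the $2$-torsion of $\gr^{2^n}_\gamma K(n)^*(\overline Q)$, which by the change-of-base-field compatibility of the gamma filtration equals $\gr^{2^n}_\gamma K(n)^*(Q)$. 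The preceding Proposition bounds the latter torsion by $\ZZ/2$ when $\dim Q$ is odd or when $r$ is even, and by $\ZZ/2^s$ with $s=\min(\nu_2(r-1)+2,\,2^n)$ when $\dim Q$ is even, $d=1+r(2^n-1)$ and $r$ is odd. Together with the fact that the torsion-free part of $\CH^{2^n}(Q)$ is $\ZZ$ (as above), these are exactly the stated bounds.

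The parts requiring real work are all upstream and are quoted here: the absence of torsion in codimensions $\le\frac{2^n-1}{2-1}$ rests on Vishik's symmetric operations, via Proposition~\ref{prop:symm_op} and Lemma~\ref{lm:symm_top_shift} inside the proof of Theorem~\ref{th:no_torsion}, while the bound on $\gr^{2^n}_\gamma K(n)^*(Q)$ rests on the Riemann--Roch formula of Corollary~\ref{cr:riemann-roch} applied to the operations $c^{K(n)}_{2^n}$, $\chi$, $\psi$, whose relevant coefficients are controlled by Propositions~\ref{prop:constant_cpn} and~\ref{prop:constant_chern_classes2}. Granting those inputs, the only genuinely delicate point of the present proof is the reduction in the first paragraph: correctly matching the index shift $m=n+2$ in Propositions~\ref{moravaproj} and~\ref{moravaodd}, and translating the splitting of the $K(n)^*$-motive of $Q$ into the isomorphism $K(n)^*(Q)\rarr K(n)^*(\overline Q)$ via Rost nilpotence.
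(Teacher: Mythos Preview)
Your proposal is correct and follows exactly the paper's approach: the paper's proof is the single sentence ``Combining this together with Proposition~\ref{prop:chow_top_filt_morava}, Theorem~\ref{th:morava_gamma_properties} and Propositions~\ref{moravaproj} and~\ref{moravaodd} we obtain the following theorem,'' and you have correctly unpacked how these ingredients (together with Theorems~\ref{th:no_torsion} and~\ref{th:general_gamma_app} and the preceding Proposition) assemble into the statement. Your handling of the passage from $\CH^i(Q)\otimes\Z{2}$ to $\CH^i(Q)$ via the fact that torsion in Chow groups of quadrics is $2$-primary is a detail the paper leaves implicit, and you treat it correctly.
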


\begin{rem}\label{rem:top_vs_gamma}
One can show that the estimates one gets using just the gamma filtration are not so strong if $j\neq 0$.
Namely, if $j\neq 0$ one obtains $\ZZ/2$ in the components $\CH^{\ge j+1}$.
This shows that the graded factors of the gamma filtration do not give exact bounds for 
the topological filtration even in small codimensions.
\end{rem}

\begin{cor}\label{cr:bounds_genalbert}
Let $q$ be a generalized anisotropic Albert form of dimension $6\cdot 2^r$.
Then $\mathrm{Tors} \CH^j(Q)=0$ for all $j<2^r+1$.
\end{cor}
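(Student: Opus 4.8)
The plan is to realize $Q$ as a quadric to which Theorem~\ref{th:mult_torsion_quadric} applies with $n=r$, and to check that the combinatorial parameter $j$ of that theorem is nonzero --- which is exactly the alternative in which it yields torsion-free Chow groups.

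First I would set up the input data. We may assume the Albert form $\rho$, hence $q$, is anisotropic: if $\rho$ is isotropic its anisotropic kernel has dimension $0$ or $4$, so the anisotropic kernel of $q$ has dimension $0$ or $4\cdot 2^r$, and the motivic decomposition of the isotropic quadric $Q$ then pushes all torsion in $\CH^*(Q)$ into codimensions $>2^r$. With $\rho$ anisotropic, $q=\rho\otimes\langle\!\langle x_1,\ldots,x_r\rangle\!\rangle$ is anisotropic over $L:=F(x_1,\ldots,x_r)$. Since $\rho$ is an Albert form it lies in $I^2$, and $\langle\!\langle x_1,\ldots,x_r\rangle\!\rangle$ is an $r$-fold Pfister form, so $q\in I^{r+2}$. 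Put $n:=r$. As $L$ has characteristic $0$ and contains $\sqrt{-1}$, Proposition~\ref{moravaproj} shows that the $K(n)^*$-motive of $Q$ is split; by Lemma~\ref{lm:RNP_surj}, together with Rost nilpotence for Morava motives of projective homogeneous varieties (\cite[Corollary~3.5]{GV18}), the restriction map $K(n)^*(Q)\to K(n)^*(\overline Q)$ is an isomorphism. Hence $Q$ and $n$ satisfy the hypotheses of Theorem~\ref{th:mult_torsion_quadric}, and $q\in I^{n+2}$.

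Next I would read off the numerical invariants of that theorem. Here $D:=\dim Q=6\cdot 2^r-2$ is even, $d:=[D/2]=3\cdot 2^r-1$, and $2^n-1=2^r-1$; since $d=3(2^r-1)+2$ we have $d\equiv 2\pmod{2^r-1}$, so the integer $j\in[0,2^n-2]$ determined by $d\equiv 1+j\pmod{2^n-1}$ equals $1$ as soon as $2^r-2\ge 1$, i.e.\ for $r\ge 2$. Because $j=1\ne 0$, Theorem~\ref{th:mult_torsion_quadric}\,(1) gives $\CH^{0\le *\le 2^n-1}(Q)=\ZZ$ and $\CH^{2^n}(Q)=\ZZ$, so $\mathrm{Tors}\,\CH^i(Q)=0$ for all $i\le 2^n=2^r$, which is exactly the asserted range $i<2^r+1$. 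For $r\le 1$ one needs torsion-freeness only for $i\le 2^r\le 2$: the groups $\CH^0(Q)$ and $\CH^1(Q)=\Pic(Q)$ of a positive-dimensional quadric are torsion-free, and for $r=1$ the group $\CH^2(Q)$ is torsion-free by Karpenko's classical bound (Proposition~\ref{th:karpenko_bounds}), since $D=10>6$.

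The argument has no essentially new ingredient; the point to get right is the congruence $d\equiv 2\pmod{2^r-1}$, which is what puts us in the case $j\ne 0$ (torsion-free) of Theorem~\ref{th:mult_torsion_quadric} rather than in the cases $j=0$ that only bound the torsion by some $\ZZ/2^s$, and the small cases $r\le 1$, not reached in sharp form by that theorem, must be handled by the known description of $\CH^{\le 2}$ of a quadric. I would also note that already for $r=2$ this is not contained in Karpenko's results, since his vanishing of $\mathrm{Tors}\,\CH^4$ requires $\dim Q>22$ while here $\dim Q=22$.
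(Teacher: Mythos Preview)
Your proof is correct and follows the same route as the paper: observe $q\in I^{r+2}$, compute $d\equiv 2\pmod{2^r-1}$ so that $j=1\neq 0$, and apply Theorem~\ref{th:mult_torsion_quadric}(1). Two minor remarks: your detour through Proposition~\ref{moravaproj} and Lemma~\ref{lm:RNP_surj}, as well as the reduction to the anisotropic case, are unnecessary since Theorem~\ref{th:mult_torsion_quadric} already takes only $q\in I^{n+2}$ as its hypothesis (and $\sqrt{-1}\in L$ plays no role in Proposition~\ref{moravaproj}); on the other hand, your separate treatment of $r\le 1$ is more careful than the paper's own proof, since for $r=1$ the range $j\in[0,2^n-2]$ forces $j=0$ and the theorem by itself only bounds the torsion in $\CH^2$ by $\ZZ/2$ rather than killing it.
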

\begin{proof}
Indeed, the Albert form lies in $I^2(F)$, and therefore, $q$ lies in $I^{r+2}(F)$.
Since $d\equiv 2 \mod (2^r-1)$, Theorem~\ref{th:mult_torsion_quadric} implies the claim.
\end{proof}

Vishik has communicated to the authors
that one can show the above corollary
using techniques of \cite{SV14}.

\end{ntt}

\section{Morava $K$-theory and cohomological invariants}\label{sec:morava_coh_inv}

In this section we relate the Morava $K$-theory with cohomological invariants of algebraic group; see also Proposition~\ref{moravaproj}.

\begin{thm}\label{moravarost}
Let $p$ be a prime number.
Let $G$ be a simple algebraic group over $F$ and let $X$ be the variety of Borel subgroups of $G$. Then
\begin{enumerate}
\item $G$ is of inner type iff the $K(0)^*$-motive of $X$ is split.
\item (Panin). Assume that $G$ is of inner type. All Tits algebras of $G$ are split iff the $K^0$-motive with integral coefficients
of $X$ is split.
\item Assume that $G$ is of inner type and the $p$-components of the Tits algebras 
of $G$ are split. Then the $p$-component of the Rost invariant of $G$ is zero iff the $K(2)^*$-motive of $X$ is split.
\item Let $p=2$. Assume that $G$ is of type $\E_8$.
Then $G$ is split by an odd degree field
extension iff the $K(m)^*$-motive of $X$ is split for some $m\ge 4$ iff the $K(m)^*$-motive of $X$ is split for all $m\ge 4$.
\end{enumerate}
\end{thm}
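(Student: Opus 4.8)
\emph{Common reductions.} The plan is to treat each of the four items by converting a condition on cohomological invariants into a statement about the restriction map $A^*(X)\to A^*(\overline X)$, $\overline X:=X\times_F\overline F$, for the relevant theory $A^*\in\{\CH^*\ot\QQ,\,K^0,\,K^0\ot\Zp,\,K(n)^*\}$. Since $X$ is a twisted flag variety, $M_A(X)$ satisfies Rost nilpotence (by \cite[Corollary~3.5]{GV18} for free theories and by \cite[Section~8]{CGM05} for $\CH^*$), so by Lemma~\ref{lm:RNP_surj} the splitting of $M_A(X)$ is equivalent to the surjectivity of $A^*(X)\to A^*(\overline X)$. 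In addition, by Proposition~\ref{prop:height_tate_motive} the set of $n\ge1$ with $M_{K(n)}(X)$ split is an initial segment $\{1,\dots,N(X)\}$ of $\NN$, and $N(X)=\infty$ precisely when the $\Zp$-local Chow motive of $X$ splits (for the forward implication use \cite{ViYa07}; for the converse, that $K(n)^*$ agrees with $\CH^*\ot\Zp$ in the range pertinent to $X$ once $p^n>\dim X$). Thus (3) and (4) become statements about where $N(X)$ sits.

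\emph{Items (1) and (2).} For (1), $K(0)^*=\CH^*\ot\QQ$: the Galois group acts on $\CH^*(\overline X)$, free on the Schubert classes indexed by the Weyl group, through the $*$-action on the Dynkin diagram, which is trivial iff $G$ is of inner type; a transfer argument over a finite Galois splitting extension identifies $\CH^*(X\times X)\ot\QQ$ with $\bigl(\CH^*(\overline X\times\overline X)\ot\QQ\bigr)^\Gamma$, so the rationally explicit complete system of orthogonal Tate idempotents is defined over $F$ exactly when $\Gamma$ acts trivially. For (2), ``Tits algebras split $\Rightarrow$ $K^0$-motive split'' is Panin's statement recalled in \ref{titsk0}; conversely, if the $K^0$-motive of $X$ splits then by Krull--Schmidt each summand $L_w$ of \ref{titsk0} is a Tate motive, forcing $\ind A_w=1$ for all $w$, and taking $w=s_k$ gives $\rho_{s_k}=\omega_k-\alpha_k$, hence $\beta(\rho_{s_k})=[A_k]$, so every Tits algebra splits. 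Running the same argument $p$-locally through $K(1)^*/(v_1-1)\simeq K^0\ot\Zp$ shows that the $p$-components of the Tits algebras split iff $M_{K(1)}(X)$ splits, i.e.\ iff $N(X)\ge1$.

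\emph{Item (3).} Under the hypotheses $N(X)\ge1$ by the previous remark, and I must prove $N(X)\ge2$ iff the $p$-part $r_{(p)}(G)$ of the Rost invariant vanishes. Since $G$ splits over $F(X)$, $r_{(p)}(G)$ lies in $\Ker\bigl(H^3_{\et}(F,\qq/\zz(2))\to H^3_{\et}(F(X),\qq/\zz(2))\bigr)$; I would feed this into the exact sequence~\eqref{rostseq}, together with its analogue for the variety of Borel subgroups, to identify the (non)vanishing of $r_{(p)}(G)$ with the (non)surjectivity of $\CH^2(X)\ot\Zp\to\CH^2(\overline X)\ot\Zp$. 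Then I would pass to $K(2)^*$: the operation $c_2^{\CH}$ is an isomorphism $\gr^2_\gamma K(2)^*\xrightarrow{\sim}\CH^2\ot\Zp$ compatibly with base change (Theorem~\ref{th:morava_gamma_properties}), and the cokernel $C$ of $BP^*(X)\to BP^*(\overline X)$ is a $(BP,BP_*BP)$-comodule that is already annihilated by $v_1$ by item (2); Corollaries~\ref{cor:support_coh_mfg} and~\ref{cor:Kn-trivial-ideals-of-BP} then let me upgrade surjectivity in codimension $2$ to surjectivity of $K(2)^*(X)\to K(2)^*(\overline X)$, which is the splitting of $M_{K(2)}(X)$ by Lemma~\ref{lm:RNP_surj}. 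The hard part will be making this dictionary precise in both directions — isolating how the degree-$3$ invariant controls the second Morava $K$-theory; this is essentially the Voevodsky-type comparison between the simplicial motivic cohomology of~\eqref{voev} and $K(2)^*$, and I expect it to be the main obstacle.

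\emph{Item (4).} Here $p=2$, $G$ of type $\E_8$, so its Tits algebras are automatically trivial. First, ``$G$ split by an odd-degree extension'' is equivalent to the $\ZZ_{(2)}$-motive of $X$ being split: a $0$-cycle of odd degree on the variety of Borel subgroups forces the motive to split (via the $J$-invariant; see \cite{PSZ08}), the converse being clear; hence, by the first paragraph, this is the same as $N(X)=\infty$, i.e.\ as ``$M_{K(m)}(X)$ split for all $m\ge4$'', which trivially gives ``split for some $m\ge4$''. It remains to rule out $4\le N(X)<\infty$, i.e.\ to show that if the $\ZZ_{(2)}$-motive of $X$ is non-split then $M_{K(4)}(X)$ is already non-split. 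For this I would invoke the known $2$-local motivic structure of the variety of Borel subgroups of $\E_8$: it decomposes into Tate twists of Tate motives and of the generalized Rost motives $R_3$ and $R_5$ — attached to the mod-$2$ invariants of $\E_8$ in degrees $3$ and $5$ (cf.\ \cite{PSZ08}, \cite{Sem13}) — and is non-split iff one of these invariants is non-zero; in that case a non-split twist of $R_3$ or $R_5$ occurs, and by Proposition~\ref{pro62}(2) and~(3) both $R_3$ ($m=3$) and $R_5$ ($m=5$) have non-split $K(4)^*$-motive, as $4\ge m-1$. The main obstacle here is precisely this structural input — that for $\E_8$ at $2$ there are no further obstructions beyond degrees $3$ and $5$, equivalently that $N(X)$ never lands strictly between $3$ and $\infty$.
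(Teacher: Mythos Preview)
Your handling of (1) and (2) is essentially the paper's. The problems are in (3) and (4), and in both cases the missing ingredient is the same: a passage to a field extension over which the relevant cohomological invariant becomes a \emph{pure symbol}, so that the motive of $X$ actually decomposes into Rost motives and Proposition~\ref{pro62} applies.

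\textbf{Item (3).} Your proposed bridge ``Rost invariant $\leftrightarrow$ $\CH^2$ surjective $\leftrightarrow$ $K(2)^*$ surjective'' has a real gap at the second step, which you yourself flag. From the hypothesis you only get $C\ot_{BP}K(1)=0$ (equivalently, $C$ is $v_1$-power torsion), not ``$C$ annihilated by $v_1$''; and Corollaries~\ref{cor:support_coh_mfg}, \ref{cor:Kn-trivial-ideals-of-BP} go the wrong way (from $K(n)$ down to $K(m)$, $m<n$). Surjectivity in codimension~$2$, even together with $C\ot K(1)=0$, does not force $C\ot K(2)=0$: the Landweber pieces $BP/I(2)$ of $C$ can live in arbitrary codimensions. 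The paper does something entirely different: a type-by-type argument. After a prime-to-$p$ extension one may assume all Tits algebras and the full Rost invariant are split. Types $\A,\C$ are then already split; types $\B,\D$ reduce to Proposition~\ref{moravaproj}. For exceptional types, if the Rost invariant is nonzero one passes via \cite[Theorem~5.7]{PS10} to a field $K$ over which it becomes a nonzero pure symbol; over $K$ the motive of $X$ is a sum of shifts of $R_3$ by \cite{PSZ08}, and $R_3$ is not $K(2)^*$-split by Proposition~\ref{pro62}(2). Conversely, if the Rost invariant vanishes and $G$ is not of type $\E_8$, then $G$ is split by \cite{Ga01}; for $\E_8$ one invokes the degree-$5$ invariant $u$ of \cite{Sem13}, reduces (as in the proof of Proposition~\ref{moravaproj}) to $u$ a pure symbol, whereupon the motive is a sum of $R_5$'s, which \emph{are} $K(2)^*$-split by Proposition~\ref{pro62}(1).

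\textbf{Item (4).} Your ``known $2$-local motivic structure'' is not correct over $F$: for a general anisotropic $\E_8$ the motive of $X$ does \emph{not} decompose into Tate motives and copies of $R_3$, $R_5$. The decompositions of \cite{PSZ08} into Rost motives only apply once the invariant in question is a pure symbol, and neither the mod-$2$ Rost invariant nor $u\in H^5_{\et}(F,\zz/2)$ is pure in general. The paper argues as follows. If the mod-$2$ Rost invariant is nonzero, then by (3) already $M_{K(2)}(X)$ is non-split, hence $M_{K(m)}(X)$ is non-split for all $m\ge 2$ by Proposition~\ref{prop:height_tate_motive}. If the mod-$2$ Rost invariant is zero but $G$ is not split by an odd-degree extension, then $u\ne 0$; by \cite[Theorem~2.10]{OVV07} there is a field $K$ with $u_K$ a nonzero pure symbol, and over $K$ the motive of $X$ is a sum of $R_5$'s, which are not $K(m)^*$-split for $m\ge 4$ by Proposition~\ref{pro62}. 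So the obstacle you name at the end---``no further obstructions beyond degrees $3$ and $5$''---is exactly what the field-extension step buys, and it is not available over $F$ directly.
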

\begin{proof}
(1) Recall that $K(0)^*=\CH^*\otimes\qq$ by definition.
If $G$ is of inner type, then it is well-known that the Chow motive of $X$ with rational coefficients is split (e.g.
this follows from \cite[Theorems~2.2 and 4.2]{Pa94}, since $K^0$ and $\CH^*$ are isomorphic theories with rational coefficients).
On the other hand, if $G$ is of outer type, then the absolute Galois group of $F$ acts non-trivially on the Chow group of $X_{\Fsep}$
(see \cite[Section~2.1]{MT95} for the description of the action on the Picard group of $X_{\Fsep}$). Therefore, the Chow motive of $X$ with
any coefficients cannot be split in this case.

(2) Follows from \cite{Pa94}; see also Section~\ref{titsk0}.

(3) First we make several standard reductions.
Since all prime numbers coprime to $p$ are invertible in the coefficient ring of the Morava $K$-theory,
by transfer argument we are free to take finite field extensions of the base field of degree coprime to $p$.
Hence we can assume that not only the $p$-components of the Tits algebras are split, but that the Tits algebras are completely split (and the same for the Rost invariant).

{\bf Types $\A$ and $\C$.}
If $G$ is a group of inner type $\A$ or $\C$ with trivial Tits algebras, then $G$ is split and the statement
follows. Indeed, by \cite[\S26]{Inv} the group $G$ is isogenous to $\SL_1(A)$ for a central simple algebra $A$ or, respectively,
to $\Sp(B,\sigma)$ for a central simple algebra $B$ with a symplectic involution $\sigma$. 
By \cite[\S27.B]{Inv} the algebra $A$, respectively, the algebra $B$ is a Tits algebra of $G$. 
Therefore, if $A$, respectively, $B$ is split, then $G$ is split, and the statement of the proposition is obvious.

{\bf Types $\B$ and $\D$.}
If $G$ is a group of inner type $\B$ or $\D$, then $G$ is isogenous to $\Spin(V,q)$
or, respectively, to $\Spin(D,\tau)$ for an odd-dimensional quadratic space $(V,q)$ 
or, respectively, for an algebra $D$ with an orthogonal involution $\tau$ with trivial discriminant.
By \cite[\S27.B]{Inv} the even Clifford algebra $C_0(V,q)$, respectively, the algebra $D$ and the Clifford algebras $C^{\pm}(D,\tau)$ 
are Tits algebras of $G$. Therefore, if the Tits algebras of $G$ are split, we are in the situation of quadratic forms.

Now the statement of the proposition follows from Proposition~\ref{moravaproj} (in the even-dimensional case) and from Proposition~\ref{moravaodd} (in the odd-dimensional case). Indeed, an even-dimensional quadratic form with trivial discriminant lies in $I^4$ (resp. an odd-dimensional quadratic form lies in $I^4+\langle c\rangle$ for some $c\in F^\times$) iff its Clifford and its Rost invariants are zero (see \cite[\S31.B]{Inv} for a description of the Rost invariant in the case of quadratic forms).

{\bf Exceptional types.}
Let now $G$ be a group of an exceptional type. Taking coprime to $p$ field extensions we assume that our base field is $p$-special.
Assume that the $K(2)^*$-motive of $X$ is split, but the Rost invariant of $G$ is not trivial. 

There is a field extension $K$ of $F$ such that the Rost invariant of $G_K$
is a non-zero pure symbol.
Indeed, for groups of types $\E_6$, $\FF_4$, $\G_2$, $\E_7$ with $p=3$ and $\E_8$ with $p=5$ this is already the case for $K=F$ (see \cite[Part~II]{Ga09}).

If $G$ is of type $\E_7$ with $p=2$, then by \cite[Theorem~5.7]{PS10} the variety $Y$ of maximal parabolic subgroups of $G$ of type $6$ (enumeration of simple roots follows Bourbaki) is not generically split. Over its function field $K=F(Y)$ the anisotropic kernel of $G_K$ is of type $\D_4$ and, thus, the Rost invariant of $G_K$ is a non-zero pure symbol.

If $G$ is of type $\E_8$ with $p=2$, then by \cite[Theorem~5.7]{PS10} one
can take $K=F(Y)$, where $Y$ is the variety of maximal parabolic subgroups of $G$ of type $6$ (the anisotropic kernel of $G_K$ will be again of type $\D_4$), and if $G$ is of type $\E_8$ with $p=3$, then one
can take $K=F(Y)$, where $Y$ is the variety of maximal parabolic subgroups of $G$ of type $7$ (the anisotropic kernel of $G_K$ will be of type $\E_6$).

In all cases the motive of $X_K$ is a direct sum of Rost motives corresponding to this non-zero symbol
of degree $3$ (see \cite{PSZ08}). This gives a contradiction with Proposition~\ref{pro62}.

Conversely, if the Rost invariant of $G$
is zero and $G$ is not of type $\E_8$ with $p=2$, then by \cite[Theorem~0.5]{Ga01} (for exceptional groups different from $\E_8$), \cite{C94} and \cite[Proposition~15.5]{Ga09} (for $\E_8$ at the prime $5$), \cite{C10} and \cite[Section~10c]{GPS16} (for $\E_8$ at the prime $3$) the group $G$ is split and the statement of the proposition follows.

Therefore, it remains to consider the case when $G$ is a group of type $\E_8$ with trivial Rost invariant. By \cite[Theorem~8.7]{Sem13} $G$ has an invariant $u\in H^5_{\et}(F,\zz/2)$
such that for every field extension $K/F$ the invariant $u_K=0$ iff $G_K$ splits over a field extension
of $K$ of odd degree.
Exactly as in the proof of Proposition~\ref{moravaproj} (note that we can represent $u$ by a quadratic form from $I^5$) we can pass to a splitting field $\widetilde F$ of $u$ such that the restriction homomorphism $K(2)^*(X\times X)\to K(2)^*((X\times X)_{\widetilde F})$ is surjective. Therefore, by Rost nilpotence the $K(2)^*$-motive of $X$ is split.

(4) If $G$ is split by an odd degree field extension, then the $K(m)^*$-motives of $X$ are split for all $m$, since $p=2$.
Conversely, if $G$ does not split over an odd degree field extension of $F$ and
the even component of the Rost invariant of $G$ is non-trivial, then by item (3) the $K(2)^*$-motive of $X$ is not split
and, hence, by Proposition~\ref{prop:height_tate_motive} the $K(m)^*$-motives are not split for all $m\ge 2$.

Besides, if $G$ does not split over an odd degree field extension of $F$ and the even component of the Rost invariant
of $G$ is trivial, then the invariant $u$ is defined and is non-zero.
By \cite[Theorem~2.10]{OVV07} there is field extension $K$ of $F$ such that $u_K$ is a non-zero pure symbol.
Over $K$ the motive of $X$ is a direct sum of Rost motives corresponding to $u_K$. By Proposition~\ref{pro62} the $K(m)^*$-Rost
motives for a symbol of degree $5$ are not split, if $m\ge 4$.
\end{proof}

\vspace{-0.2cm}

Finally we remark that sequence~\eqref{voev} can be used to define the Rost invariant in general,
the invariant $f_5$ for groups of type $\FF_4$ (see \cite[\S40]{Inv}) and an invariant of degree $5$ for groups of type $\E_8$ with trivial Rost invariant (see \cite{Sem13}).
Namely, for the Rost invariant let $G$ be a simple simply-connected algebraic group over $F$.
Let $Y$ be a $G$-torsor and set $n=3$. Then sequence~\eqref{voev} gives an exact sequence
$$0\to H_{\mathcal{M}}^{3,2}(\mathcal{X}_Y,\qq/\zz)\to \Ker\big(H^3_{\et}(F,\qq/\zz(2))\to H_{\et}^3(F(Y),\qq/\zz(2))\big)\to 0$$
But by sequence~\eqref{rostseq} $\Ker\big(H^3_{\et}(F,\qq/\zz(2))\to H_{\et}^3(F(Y),\qq/\zz(2))\big)$ is a finite cyclic
group. Therefore, $H_{\mathcal{M}}^{3,2}(\mathcal{X}_Y,\qq/\zz)$ is a finite cyclic group and the Rost invariant
of $Y$ is the image of $1\in H_{\mathcal{M}}^{3,2}(\mathcal{X}_Y,\qq/\zz)$ in $H_{\et}^3(F,\qq/\zz(2))$.

To construct invariants of degree $5$ for $\FF_4$ (resp. for $\E_8$) one takes $n=5$ and $Y$ to be the variety of parabolic subgroups
of type $4$ for $\FF_4$ (the enumeration of simple roots follows Bourbaki) and resp. the variety of parabolic subgroups of any type for $\E_8$.
In both cases $H^{5,4}_{\mathcal{M}}(\mathcal{X}_Y,\qq/\zz)$ is cyclic of order $2$ and the invariant is the
image of the only non-zero element of $H^{5,4}_{\mathcal{M}}(\mathcal{X}_Y,\qq/\zz)$ in $H^5_{\et}(F,\qq/\zz(4))$; see \cite{Sem13}.


\medskip

\medskip

\noindent
\sc{Pavel Sechin\\
Mathematisches Institut, Ruprecht-Karls-Universit\"at Heidelberg,\\ 
Mathematikon, Im Neuenheimer Feld 205, 69120 Heidelberg, Germany;\\
National Research University Higher School of Economics, Moscow, Russia\\
{\tt psechin@mathi.uni-heidelberg.de}
}

\medskip

\medskip

\noindent
\sc{Nikita Semenov\\
Mathematisches Institut, Ludwig-Maximilians-Universit\"at M\"unchen, Theresienstr. 39,
D-80333 M\"unchen, Germany}\\
{\tt semenov@math.lmu.de}

\end{document}